\documentclass[11pt]{amsart}
\usepackage{amscd,amssymb,amsthm,amsmath,amssymb,textcomp,supertabular,longtable,enumerate,rotating,mathrsfs,mathtools,hyperref,latexsym,amscd,amsbsy,mathrsfs}
\usepackage{pdflscape}
\usepackage{graphicx}
\usepackage{array}
\usepackage[matrix,arrow,curve]{xy}
\usepackage{xcolor}
\sloppy\pagestyle{plain}
\usepackage{longtable}
\usepackage{enumitem}
\usepackage{MnSymbol}
\usepackage{multicol}
\usepackage{tabulary}
\usepackage{colortbl}

\usepackage[
    maxbibnames=999,    
    maxcitenames=1,     
    mincitenames=1,     
    style=apa,    
    url=false,
    doi=true,
]{biblatex}
\addbibresource{edengref.v3.bib}

\usepackage{geometry}
\geometry{
	top=2cm,
	bottom=2.0cm,
	left=1.5cm,
	right=1.5cm,
}

\newtheorem{lemma}[equation]{Lemma}
\newtheorem{corollary}[equation]{Corollary}

\newtheorem*{calabiproblem*}{Calabi Problem}
\newtheorem{remark}[equation]{Remark}

\theoremstyle{definition}

\makeatletter\@addtoreset{equation}{section} \makeatother

\swapnumbers

\newtheoremstyle{dotless}{}{}{\rm}{}{\sc}{}{ }{}

\theoremstyle{dotless}

\newcommand{\DR}{\mathbb{R}} 

\newcommand{\DP}{\mathbb{P}}
\newcommand{\DA}{\mathbb{A}}



\newcommand{\Pic}{\mathrm{Pic}}

\newcommand{\Aut}{\mathrm{Aut}}

\newtheorem*{theorem*}{Theorem}
\newtheorem*{maintheorem*}{Main Theorem.}

\author{Elena Denisova}

\title{$\delta$-invariants of Du Val del Pezzo surfaces of degree $3$}

\pagestyle{headings}

\address{\emph{Elena Denisova}
\newline
\textnormal{School of Mathematics, The University of Edinburgh, Edinburgh EH9 3JZ, UK.}
\newline
\textnormal{\texttt{e.denisova@sms.ed.ac.uk}}}

\begin{document}

\maketitle
\begin{abstract}
In this article, we compute $\delta$-invariants of Du Val del Pezzo surfaces of degree $3$.
\end{abstract}
\section{Introduction}
\subsection{History and Results.} 
It is well known that a smooth Fano variety admits a K\"ahler--Einstein metric if and only if it is $K$-polystable.
For two-dimensional Fano varieties (del Pezzo surfaces), Tian and Yau showed that a smooth del Pezzo surface is $K$-polystable if and only if it is not the blow-up of $\mathbb{P}^2$ at one or two points (see~\cite{TianYau1987,Ti90}).
Considerable progress has been made in the case of threefolds (see~\cite{AbbanZhuangSeshadri, Fano21, Liu23, CheltsovFujitaKishimotoPark23, LiuZhao24, GuerreiroGiovenzanaViswanathan23, Malbon24, CheltsovPark22, BelousovLoginov24, BelousovLoginov23, CheltsovFujitaKishimotoOkada23, Denisova24, Denisova23, CheltsovDenisovaFujita24}). Nevertheless, many questions remain open for Fano varieties in higher dimensions. In the case of threefolds, it has been observed that the problem often reduces to computing the $\delta$-invariants of (possibly singular) del Pezzo surfaces (see~\cite{Fano21, CheltsovDenisovaFujita24, CheltsovFujitaKishimotoOkada23}).
For smooth del Pezzo surfaces, the $\delta$-invariants were computed in~\cite{Fano21}, where it was shown that $\delta(X) \ge \frac{3}{2}$ when $X$ is a smooth del Pezzo surface of degree~$3$. The $\delta$-invariants of Du Val del Pezzo surfaces of degrees $\ge 4$ were computed in the first part of this series.
In this article, we compute the $\delta$-invariants of Du Val del Pezzo surfaces of degree~$3$. We prove that:
\begin{maintheorem*}
Let $X$ be a singular del Pezzo surface of degree $3$. Then the  $\delta$-invariant of $X$ is uniquely determined by the degree of $X$, the number of lines on $X$, and the type of singularities on $X$ which is given in the following table:\\
\begin{minipage}{5.5cm}
 \renewcommand{\arraystretch}{1.4}
  \begin{longtable}{ | c | c | c | c | }
   \hline
   $K_X^2$ & $\#$ lines & $\mathrm{Sing}(X)$ & $\delta(X)$\\
  \hline\hline
\endhead 
 $3$ & $21$ & $\DA_1$ & $\frac{6}{5}$\\
\hline
  $3$ & $16$ & $2\DA_1$ & $\frac{6}{5}$\\
\hline
 $3$ & $12$ & $3\DA_1$ & $\frac{6}{5}$\\
\hline
 $3$ & $9$ & $4\DA_1$ & $\frac{6}{5}$\\
 \hline
 $3$ & $15$ & $\DA_2$ & $1$\\
\hline
 $3$ & $11$ & $\DA_2+\DA_1$ & $1$\\
\hline
$3$ & $8$ & $\DA_2+2\DA_1$ & $1$\\
\hline 
  \end{longtable}
  \end{minipage}
  \begin{minipage}{6.5cm}
 \renewcommand{\arraystretch}{1.6}
    \begin{longtable}{ | c | c | c | c | }
   \hline
   $K_X^2$ & $\#$ lines & $\mathrm{Sing}(X)$ & $\delta(X)$\\
  \hline\hline
\endhead 
$3$ & $7$ & $2\DA_2$ & $1$\\
\hline
$3$ & $5$ & $2\DA_2+\DA_1$ & $1$\\
\hline
$3$ & $3$ & $3\DA_2$ & $1$\\
\hline
$3$ & $10$ & $\DA_3$ & $\frac{9}{11}$\\
\hline
  $3$ & $7$ & $\DA_3+\DA_1$ & $\frac{9}{11}$\\
\hline
 $3$ & $5$ & $\DA_3+2\DA_1$ & $\frac{9}{11}$\\
\hline
  \end{longtable}
  \end{minipage}
    \begin{minipage}{5.5cm}
 \renewcommand{\arraystretch}{1.4}
    \begin{longtable}{ | c | c | c | c | }
   \hline
   $K_X^2$ & $\#$ lines & $\mathrm{Sing}(X)$ & $\delta(X)$\\
  \hline\hline
\endhead 
 $3$ & $6$ & $\DA_4$ & $\frac{9}{13}$\\
 \hline
 $3$ & $4$ & $\DA_4+\DA_1$ & $\frac{9}{13}$\\
\hline
 $3$ & $3$ & $\DA_5$ & $\frac{3}{5}$\\
\hline
$3$ & $2$ & $\DA_5+\DA_1$ & $\frac{3}{5}$\\
\hline
$3$ & $6$ & $\mathbb{D}_4$ & $\frac{3}{5}$\\
\hline
$3$ & $3$ & $\mathbb{D}_5$ & $ \frac{9}{19}$\\
\hline
$3$ & $1$ & $\mathbb{E}_6$ & $\frac{1}{3}$\\
\hline
  \end{longtable}
  \end{minipage}\\
  \end{maintheorem*}
\noindent {\bf Acknowledgments:} I am grateful to my supervisor Professor Ivan Cheltsov for the introduction to the topic and continuous support.  
\subsection{Applications.}  Let  $X$ be a del Pezzo surface of degree $2$ with at  most Du Val singularities. Let $S$ be a weak resolution of $X$. We will call an image on $X$ of a $(-1)$-curve in $S$  {\bf a line} as was done in \cite{CheltsovProkhorov21}. The immediate corollaries from Main Theorem are:
\begin{corollary}
Let $X$ be a Du Val del Pezzo surface of degree $3$ with at most $\DA_2$ singularities then $X$ is $K$-semi-stable.
\end{corollary}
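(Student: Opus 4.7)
The plan is to deduce this corollary directly from the Main Theorem combined with the Fujita--Odaka--Blum--Jonsson criterion, which asserts that a $\mathbb{Q}$-Fano variety $X$ is $K$-semistable if and only if $\delta(X) \geq 1$. Once this equivalence is in hand, the corollary becomes a pure table lookup.

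More concretely, first I would recall (or cite) the equivalence $\delta(X) \geq 1 \iff X \text{ is } K\text{-semistable}$ for Du Val del Pezzo surfaces, so that the problem is reduced to verifying $\delta(X) \geq 1$ in every relevant case. Second, I would enumerate, using the Main Theorem, the cases where $X$ has degree $3$ and only $\DA_1$ or $\DA_2$ singularities: these are precisely the rows with $\mathrm{Sing}(X) \in \{\DA_1,\, 2\DA_1,\, 3\DA_1,\, 4\DA_1\}$, for which the table gives $\delta(X)=\tfrac{6}{5}$, and the rows with $\mathrm{Sing}(X) \in \{\DA_2,\, \DA_2+\DA_1,\, \DA_2+2\DA_1,\, 2\DA_2,\, 2\DA_2+\DA_1,\, 3\DA_2\}$, for which the table gives $\delta(X)=1$. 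In all of these, $\delta(X)\geq 1$, and the conclusion follows.

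Since the proof is essentially bookkeeping, there is no real obstacle: the entire content was already absorbed by the Main Theorem. The only caveat to state clearly is the precise meaning of \emph{at most $\DA_2$ singularities}, namely that every singular point is of type $\DA_1$ or $\DA_2$ (so the surface may nevertheless have several such points), in order to justify that the list of cases above is exhaustive.
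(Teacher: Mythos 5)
Your proposal is correct and coincides with the paper's own argument: the paper likewise reads off $\delta(X)\ge 1$ from the Main Theorem for all rows with only $\DA_1$ and $\DA_2$ singular points and then invokes the criterion that $\delta(X)\ge 1$ implies $K$-semistability (cited there as \cite[Theorem 1.59]{Fano21}). Your extra care about the meaning of ``at most $\DA_2$ singularities'' is a reasonable clarification but does not change the substance.
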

\begin{proof}
    For such $X$ have $\delta(X)\ge 1$. Thus, $X$ is $K$-semi-stable by \cite[Theorem 1.59]{Fano21}.
\end{proof}

\begin{corollary}[\cite{OdakaSpottiSun16}]
Let $X$ be a Du Val del Pezzo surface of degree $3$ with at most $\DA_1$ singularities then $X$ is $K$-stable.
\end{corollary}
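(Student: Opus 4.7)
The plan is to reduce this to the Main Theorem and a standard criterion converting $\delta$-invariant lower bounds into $K$-stability. Recall that by \cite[Theorem 1.59]{Fano21} (or the Fujita--Odaka criterion more generally), a Fano variety $X$ satisfying $\delta(X) > 1$ is $K$-stable. Thus it suffices to show $\delta(X) > 1$ in each of the cases covered by the hypothesis.

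First, I would split on whether $X$ is smooth. If $X$ is smooth, then the cubic surface result from \cite{Fano21} gives $\delta(X) \ge \tfrac{3}{2} > 1$, so the criterion applies directly. If $X$ is singular, then the phrase ``at most $\DA_1$ singularities'' means every singular point of $X$ is a node, so $\mathrm{Sing}(X)$ is one of $\DA_1$, $2\DA_1$, $3\DA_1$, or $4\DA_1$. In each of these four rows of the Main Theorem table, the computed value is $\delta(X) = \tfrac{6}{5} > 1$.

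Applying \cite[Theorem 1.59]{Fano21} in either case yields that $X$ is $K$-stable, as required.

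There is really no obstacle here beyond invoking the Main Theorem (whose proof is the substantive content of the article): the work is entirely upstream. The only minor point to check is that the hypothesis ``at most $\DA_1$ singularities'' is interpreted so as to exhaust precisely the listed rows of the table (plus the smooth case already handled in \cite{Fano21}), which is immediate from the classification of Du Val singularities by their ADE type.
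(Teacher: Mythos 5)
Your proposal is correct and is essentially the paper's own argument: the paper's proof simply observes that $\delta(X)>1$ for such $X$ (the value $\tfrac{6}{5}$ from the $n\DA_1$ rows of the Main Theorem, together with the smooth case from \cite{Fano21}) and concludes $K$-stability from the standard valuative criterion. Your version just spells out the smooth/singular case split and the enumeration of rows a bit more explicitly.
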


\begin{proof}
    For such $X$ have $\delta(X)> 1$. Thus, $X$ is $K$-stable. 
\end{proof}

\begin{corollary}[\cite{CheltsovProkhorov21}]
Let $X$ be a Du Val del Pezzo surface of degree $3$ with at most $\DA_1$ singularities then $\Aut(X)$ is finite.
\end{corollary}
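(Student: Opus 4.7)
The plan is to deduce finiteness of $\Aut(X)$ directly from $K$-stability. From the Main Theorem, every Du Val cubic $X$ with at most $\DA_1$ singularities falls into one of the four rows whose singularity type lies in $\{\DA_1,\,2\DA_1,\,3\DA_1,\,4\DA_1\}$, and in each of these cases $\delta(X)=\tfrac{6}{5}>1$. By \cite[Theorem~1.59]{Fano21}, this already implies that $X$ is $K$-stable.

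I would then invoke the standard fact that a $K$-stable $\mathbb{Q}$-Fano variety has finite automorphism group. Since $X$ is projective, $\Aut(X)$ is an algebraic group; if it were positive-dimensional, its identity component would contain either a $\mathbb{G}_m$ or a $\mathbb{G}_a$ subgroup. A nontrivial $\mathbb{G}_m$-action on $X$ produces a nontrivial product test configuration for $(X,-K_X)$ with vanishing Donaldson--Futaki invariant, which contradicts $K$-stability. A $\mathbb{G}_a$-action can be degenerated to a $\mathbb{G}_m$-action, reducing to the previous case.

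The main obstacle I anticipate is merely locating the cleanest citation for the implication "$K$-stability $\Rightarrow$ $\Aut$ finite" in the singular $\mathbb{Q}$-Fano setting; in the smooth case this is classical, while for our specific surfaces the conclusion is already recorded in \cite{CheltsovProkhorov21}. Once this is in hand, the proof reduces to a one-line consequence of the Main Theorem, precisely as in the preceding two corollaries.
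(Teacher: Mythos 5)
Your proposal matches the paper's proof: the paper likewise deduces $K$-stability from $\delta(X)=\frac{6}{5}>1$ (via the preceding corollary) and then cites \cite[Corollary 1.3]{BlumXu19} for the implication that $K$-stability forces $\Aut(X)$ to be finite. The only caveat is that your sketch of the unipotent ($\mathbb{G}_a$) case is not by itself a complete argument, but since you ultimately defer to a citation for the implication ``$K$-stable $\Rightarrow$ $\Aut$ finite'' --- exactly what the paper does --- this is immaterial.
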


\begin{proof}
    $X$ is $K$-stable so by \cite[Corollary 1.3]{BlumXu19} $\Aut(X)$ is finite.
\end{proof}

\subsubsection{Family 1.13 (Del Pezzo Threefold)} 
 Let $\mathbf{V}$ be a  Fano threefold such that $-K_{\mathbf{V}} \sim 2H$
for some $H \in \Pic(\mathbf{V})$ such that $H^3=3$. Then $\mathbf{V}$ is a del Pezzo threefold of degree $3$.  One can show that a general surface in $|H|$ is a smooth del Pezzo surface of degree $3$. 
\begin{remark}
If $\mathbf{V}$ is smooth then $\mathbf{V}$ is a smooth Fano threefold in Family  1.13. and all smooth Fano threefolds in this family can be obtained this way. Every smooth element in this family is known to be $K$-stable \cite{Fano21}.
\end{remark}
\noindent Main Theorem gives the following corollary:
\begin{corollary}
If a general element $X\in|H|$ through each point is smooth then  $\mathbf{V}$ is $K$-stable.
\end{corollary}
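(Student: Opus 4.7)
My plan is to apply the Abban--Zhuang adjunction estimate to the flag $\{p\}\subset X_p\subset \mathbf{V}$, where for each closed point $p\in\mathbf{V}$ the surface $X_p\in |H|$ is chosen to be a general (hence smooth) element passing through $p$, which exists by hypothesis. Since $-K_\mathbf{V}\sim 2H$, one has $(-K_\mathbf{V})^3=8H^3=24$ and by adjunction $-K_{X_p}\sim H|_{X_p}$, so $X_p$ is a smooth cubic surface and $\delta(X_p)\ge \tfrac{3}{2}$ by Fano21. The goal is to bound $\delta_p(\mathbf{V})$ from below by $1$ strictly and uniformly in $p$.

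First I would carry out the volume and $S$-invariant computations. The divisor class $-K_\mathbf{V}-uX_p\sim (2-u)H$ is nef for $u\in[0,2]$ with $\mathrm{vol}=3(2-u)^3$, and is not pseudo-effective for $u>2$; hence
\[
S_\mathbf{V}(X_p)=\frac{1}{24}\int_0^{2} 3(2-u)^3\,du=\frac{1}{2},\qquad \frac{A_\mathbf{V}(X_p)}{S_\mathbf{V}(X_p)}=2.
\]
Since the Zariski decomposition is trivial and $P(u)|_{X_p}=(2-u)(-K_{X_p})$, for any divisor $E$ over $X_p$ with $p\in c_{X_p}(E)$ the substitution $v=(2-u)w$ yields
\[
S(W^{X_p}_{\bullet,\bullet};E)=\frac{3}{24}\int_0^2\!\!\int_0^\infty \mathrm{vol}\bigl((2-u)(-K_{X_p})-vE\bigr)\,dv\,du=\frac{1}{2}\int_0^\infty \mathrm{vol}(-K_{X_p}-wE)\,dw=\frac{3}{2}\,S_{X_p}(E).
\]
Therefore Abban--Zhuang gives $\delta_p(\mathbf{V})\ge \min\{2,\tfrac{2}{3}\delta_p(X_p)\}\ge 1$.

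The main obstacle is upgrading this to the strict inequality $\delta(\mathbf{V})>1$ required for $K$-stability. Equality $\delta_p(X_p)=\tfrac{3}{2}$ holds only when $p$ is an Eckardt point of $X_p$; since Eckardt points form a finite set on any smooth cubic surface, and being Eckardt is a non-trivial closed condition on the subsystem of smooth $X\in|H|$ containing $p$, a general choice of $X_p$ through $p$ satisfies $\delta_p(X_p)>\tfrac{3}{2}$ so that $\delta_p(\mathbf{V})>1$ pointwise. To pass to a uniform bound I would invoke the constructibility of $p\mapsto \delta_p(\mathbf{V})$ (equivalently, that $\delta(\mathbf{V})$ is computed by finitely many divisorial valuations), so that the pointwise strict inequality forces $\delta(\mathbf{V})=\min_p\delta_p(\mathbf{V})>1$, and hence $\mathbf{V}$ is $K$-stable. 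Carrying this last uniformity step cleanly, and in particular verifying that no single valuation over $\mathbf{V}$ computes the borderline value~$1$, is the step I expect to require the most care.
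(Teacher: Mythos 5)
Your computation of $S_{\mathbf{V}}(X_p)=\tfrac12$ and of $S(W^{X_p}_{\bullet,\bullet};E)=\tfrac32 S_{X_p}(E)$, hence $\delta_p(\mathbf{V})\ge\min\{2,\tfrac23\delta_p(X_p)\}$, is exactly the paper's computation and is correct. The genuine gap is in your genericity claim that a general smooth $X_p\ni p$ satisfies $\delta_p(X_p)>\tfrac32$. A del Pezzo threefold of degree $3$ is a cubic hypersurface in $\P^4$, and it may have Eckardt points, i.e.\ points $p$ where $T_p\mathbf{V}\cap\mathbf{V}$ is a cone with vertex $p$ (the Fermat cubic threefold has $30$ of them). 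At such a point every line of $\mathbf{V}$ through $p$ is a ruling of that cone, so for \emph{every} hyperplane $\Pi\ni p$ other than $T_p\mathbf{V}$ the plane $\Pi\cap T_p\mathbf{V}$ cuts the cone in three concurrent rulings; hence $p$ is an Eckardt point of \emph{every} smooth member of $|H|$ through $p$, the condition you call ``non-trivial and closed'' is trivially satisfied by all members, and no choice of $X_p$ gives $\delta_p(X_p)>\tfrac32$. Your argument then only yields $\delta_p(\mathbf{V})\ge 1$ at these points. The paper closes exactly this case by invoking the strict form of the Abban--Zhuang inequality, \cite[Corollary~1.108]{Fano21}: for a prime divisor $\mathbf{E}$ over $\mathbf{V}$ centered at such a point one still gets $\frac{A_{\mathbf{V}}(\mathbf{E})}{S_{\mathbf{V}}(\mathbf{E})}>\min\bigl\{\frac{1}{S_{\mathbf{V}}(X)},\delta_Q(X,W^X_{\bullet,\bullet})\bigr\}=1$. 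You need this (or some substitute) to finish.

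By contrast, the step you flag as the hardest --- upgrading the pointwise bounds to a uniform $\delta(\mathbf{V})>1$ --- is not actually needed and your constructibility argument is superfluous. Once one knows $\frac{A_{\mathbf{V}}(\mathbf{E})}{S_{\mathbf{V}}(\mathbf{E})}>1$ for every prime divisor $\mathbf{E}$ over $\mathbf{V}$ (every such $\mathbf{E}$ has a center containing some point $p$, so the pointwise estimate applies to it), $K$-stability follows directly from the Fujita--Li valuative criterion $\beta(\mathbf{E})=A_{\mathbf{V}}(\mathbf{E})-S_{\mathbf{V}}(\mathbf{E})>0$; no uniform lower bound on $\delta$ is required. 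So the effort should be redirected from the uniformity question to the Eckardt-point case above.
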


\begin{proof}
Suppose $X$ is an irreducible element of $|H|$ then $S_{\mathbf{V}}(X)<1$. As explained above
we fix a prime divisor $\mathbf{E}$ over~$\mathbf{V}$.
Then we set $Z=C_{\mathbf{V}}(\mathbf{E})$ and  if $\beta(\mathbf{E})\leqslant 0$,  then
 $\delta_Q(X,W^{X}_{\bullet,\bullet})\leqslant 1$. Let $Q$ be a general point in $Z$, Let $X$ be the general element of $|H|$ that contains $Q$.  The divisor $-K_{\mathbf{V}}-uX$ is nef if and only if $u\le 2$ and the Zariski Decomposition is given by by $P(u)=-K_{\mathbf{V}}-uX\sim(2-u)X$ and $N(u)=0$ for $u\in[0,2]$.
By \cite[Corollary 1.110]{Fano21} for any divisor $F$ such that $O\in C_{X}(F)$ over $X$ we get:
\begin{align*}
S\big(W^{X}_{\bullet,\bullet}&;F\big)=\frac{3}{(-K_{\mathbf{V}})^3}\Bigg(\int_0^\tau\big(P(u)^{2}\cdot X\big)\cdot\mathrm{ord}_{Q}\Big(N(u)\big\vert_{X}\Big)du+\int_0^\tau\int_0^\infty \mathrm{vol}\big(P(u)\big\vert_{X}-vF\big)dvdu\Bigg)=\\
&= \frac{3}{24}\int_0^\tau\int_0^\infty \mathrm{vol}\big(P(u)\big\vert_{X}-vF\big)dvdu=
\frac{3}{24}\int_0^2(2-u)^3\int_0^\infty\mathrm{vol}\big(-K_{X}-wF\big)dwdu=\\
&= \frac{3}{8}\int_0^2(2-u)^3\Big(\frac{1}{3}\int_0^\infty\mathrm{vol}\big(-K_{X}-wF\big)dw\Big)du=\frac{3}{8}\int_0^2(2-u)^3S_{X}(F)du= \frac{3}{2}S_{X}(F)\le \frac{3}{2} \frac{A_{X}(F)}{\delta_Q(X)}
\end{align*}
We get that $\delta_Q(\mathbf{V})\ge \frac{2}{3}\delta_Q(X)$.  For smooth $X$  we have $\delta_Q(X)\ge \frac{3}{2}$.   If $Q$ is a singular point and there exists an element $X$ of $|H|$ with  $\delta_Q(X)= \frac{3}{2}$ then  $\frac{A_{\mathbf{X}}(\mathbf{E})}{S_{\mathbf{X}}(\mathbf{E})}>\min\Big\{\frac{1}{S_{\mathbf{X}}(X)},\delta_Q\big(X,W^{X}_{\bullet,\bullet}\big)\Big\}$ from \cite[Corollary 1.108.]{Fano21} and otherwise we choose $X$ with   $\delta_Q(X)> \frac{3}{2}$  so $\delta_Q(\mathbf{V})>1$ if  $X$ is smooth and the result follows.
\end{proof}

\subsubsection{Family 2.5} 
Let $\mathbf{V}$ be a  Fano threefold such that $-K_{\mathbf{V}} \sim 2H$
for some $H \in \Pic(\mathbf{V})$ such that $ H^3 =3$. Then $\mathbf{V}$ is a del Pezzo threefold of degree $3$. 
Let $S_1$ and $S_2$ be two distinct surfaces in the linear system $|H|$, and let $\mathcal{C} = S_1 \cap S_2$. Suppose that the curve $\mathcal{C}$ is smooth. Then $\mathcal{C}$ is an elliptic curve by the adjunction formula. Let $\pi : \mathbf{X}\to \mathbf{V}$ be the blow up of the curve $\mathcal{C}$, and let $E$ be the $\pi$-exceptional surface. We have the following commutative diagram:
$$\xymatrix{
&\mathbf{X}\ar[dl]_{\pi} \ar[dr]^{\phi}&\\
\mathbf{V}\ar@{-->}[rr]& &\DP^1
}$$
Where $\mathbf{V} \dashrightarrow \DP^1$
is the rational map given by the pencil that is generated by $S_1$ and $S_2$,
and $\phi$ is a fibration into del Pezzo surfaces of degree $3$. 
\begin{remark}
If $R$ is smooth then $\mathbf{X}$ is a smooth Fano threefold in Family  2.5. and all smooth Fano threefolds in this family can be obtained this way. Every smooth threefold in this family such that there is no fiber of $p_1$ which contains $\mathbb{D}_5$ or $\mathbb{E}_6$ singularity in this family is known to be $K$-stable  \cite{CheltsovDenisovaFujita24}.
\end{remark}
\noindent Main Theorem gives the following corollary:
\begin{corollary}
    If  every fiber $X$ of $\phi$   at most $\mathbb{A}_2$ singularities, then $\mathbf{X}$ is $K$-stable.
\end{corollary}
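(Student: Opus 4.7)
The plan is to apply the Abban--Zhuang filtration method to the fibration $\phi:\mathbf{X}\to\DP^1$, reducing $K$-stability of $\mathbf{X}$ to the $\delta$-invariants of its fibers, for which the Main Theorem provides $\delta(X)\ge 1$ under the $\DA_2$-hypothesis.

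First I compute the Zariski decomposition of $-K_\mathbf{X}-uX$, where $X\sim\pi^*H-E$ is a fiber of $\phi$. Standard blow-up intersection formulas give $(-K_\mathbf{X})^3=12$, $(\pi^*H)^2\cdot E=0$, $\pi^*H\cdot E^2=-3$, and $E^3=-6$; writing $-K_\mathbf{X}-uX=(2-u)\pi^*H+(u-1)E$, this divisor is nef on $u\in[0,1]$ (as a convex interpolation between $-K_\mathbf{X}$ and $\pi^*H$), while on $[1,2]$ it has Zariski decomposition $P(u)=(2-u)\pi^*H$ and $N(u)=(u-1)E$. Integration yields $S_\mathbf{X}(X)=\tfrac{11}{16}$, hence $A_\mathbf{X}(X)/S_\mathbf{X}(X)=\tfrac{16}{11}>1$.

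Next I restrict to a fiber. Since $\pi|_X:X\to S\in|H|$ is an isomorphism and $\mathcal{C}\in|-K_S|$, both $\pi^*H|_X$ and $E|_X$ represent $-K_X$, so $P(u)|_X=-K_X$ for $u\in[0,1]$, $P(u)|_X=(2-u)(-K_X)$ for $u\in[1,2]$, and $N(u)|_X=(u-1)C$ with $C:=X\cap E\in|-K_X|$. For a point $Q\in X\setminus C$, the $\mathrm{ord}_Q(N(u)|_X)$-contribution vanishes, and the Abban--Zhuang formula (as used in the Family 1.13 corollary above) simplifies to $S(W^X_{\bullet,\bullet};F)=\tfrac{15}{16}S_X(F)$ for every prime divisor $F$ over $X$ with center $Q$. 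Since $X$ has at most $\DA_2$ singularities, the Main Theorem gives $\delta_Q(X)\ge\delta(X)\ge 1$, so $\delta_Q(\mathbf{X})\ge\min\bigl\{\tfrac{16}{11},\tfrac{16}{15}\delta_Q(X)\bigr\}\ge\tfrac{16}{15}>1$.

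The main obstacle is the case $Q\in C$. Here the two-step estimate gives only $S(W^X_{\bullet,\bullet};F)\le\tfrac{1}{16}+\tfrac{15}{16}\cdot\tfrac{A_X(F)}{\delta_Q(X)}$, producing no strict inequality when $\delta_Q(X)=1$ and $A_X(F)=1$. I would refine to the three-step flag $\mathbf{X}\supset X\supset C\supset Q$, which yields
\[
\delta_Q(\mathbf{X})\ge\min\Bigl\{\tfrac{16}{11},\,\tfrac{A_X(C)}{S(W^X_{\bullet,\bullet};C)},\,\tfrac{A_C(Q)}{S(W^{X,C}_{\bullet,\bullet,\bullet};Q)}\Bigr\}.
\]
Using $C\sim-K_X$, a direct computation gives $S(W^X_{\bullet,\bullet};C)=\tfrac{3}{8}$, so the middle term equals $\tfrac{8}{3}>1$. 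The technical heart of the proof is the third term: one must show $S(W^{X,C}_{\bullet,\bullet,\bullet};Q)<A_C(Q)$ for every $Q\in C$. This amounts to a triple-integral volume estimate along the smooth elliptic curve $C$ on the cubic $X$, case-analysed according to whether $Q$ is a smooth point of $X$ or one of its (at most $\DA_2$) singularities lying on $C$. The argument parallels the smooth-fiber computation in~\cite{CheltsovDenisovaFujita24}, and the $\DA_2$-hypothesis is used precisely to prevent a $\mathbb{D}_5$- or $\mathbb{E}_6$-type singularity of $X$ from inflating the pseudo-effective thresholds of $-K_X-vC$ at $Q$.
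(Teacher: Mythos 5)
Your treatment of the generic case coincides with the paper's: the same Zariski decomposition of $-K_{\mathbf{X}}-uX$, and the same identity $S\big(W^X_{\bullet,\bullet};F\big)=\tfrac{15}{16}S_X(F)$ when the centre $Q$ avoids $C=X\cap E$ (the paper in fact only writes out this case, silently discarding the $\mathrm{ord}(N(u)\vert_X)$ term, which is legitimate precisely when $Q\notin E$). You are also right to flag $Q\in C$ as the delicate case: there the extra term equals $\tfrac{1}{16}\mathrm{ord}_F(C)$, and the naive bound $\tfrac{1}{16}\mathrm{ord}_F(C)+\tfrac{15}{16}\tfrac{A_X(F)}{\delta_Q(X)}$ need not beat $A_X(F)$ when $\delta_Q(X)=1$. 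The problem is that your proposed repair is not a proof: the decisive inequality $S\big(W^{X,C}_{\bullet,\bullet,\bullet};Q\big)<A_C(Q)$ is announced as ``the technical heart'' and then deferred to an unspecified triple-integral estimate. Since $Q\in C$ is the only situation in which anything can fail, the proposal has a genuine gap exactly where the argument needs to be carried out.

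Moreover, the geometry of that case is misread in a way that matters, because the correct reading makes the three-step flag unnecessary. Since $\mathcal{C}=S_1\cap S_2$ is smooth, no member of the pencil is singular at a point of the base locus (a singular point of $s_1+\lambda s_2$ on $\{s_1=s_2=0\}$ would force $ds_1$ and $ds_2$ to be proportional there, contradicting smoothness of $\mathcal{C}$); hence every $Q\in C$ is a smooth point of $X$, your case split ``smooth point versus $\DA_2$ singularity on $C$'' is vacuous in its second branch, and the $\DA_2$ hypothesis enters only off $C$, through $\delta(X)\ge 1$. The short fix is then: $(X,C)$ is log smooth at $Q$, so $\mathrm{ord}_F(C)<A_X(F)$ for every prime divisor $F\ne C$ over $X$ centred at $Q$, whence
$S\big(W^X_{\bullet,\bullet};F\big)=\tfrac{1}{16}\mathrm{ord}_F(C)+\tfrac{15}{16}S_X(F)<\tfrac{1}{16}A_X(F)+\tfrac{15}{16}A_X(F)=A_X(F)$,
using $S_X(F)\le A_X(F)$ from the Main Theorem; the remaining divisor $F=C$ is disposed of by your own computation $S\big(W^X_{\bullet,\bullet};C\big)=\tfrac{3}{8}<1=A_X(C)$. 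This closes the $Q\in C$ case with the two-step flag you already set up, and is the argument the published proof ought to have spelled out.
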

\begin{proof}
If $X$ is an irreducible fiber of $p_1$ then we have $S_{\mathbf{X}}(X)<1$. We now fix a prime divisor $\mathbf{E}$ over~$\mathbf{X}$.
Then we set $Z=C_{\mathbf{X}}(\mathbf{E})$. Let $Q$ be the point on $Z$. let $X$ be the fiber of $\phi$ that passes through $Q$.  Then $-K_{\mathbf{X}}-uX$ is nef if and only if $u\le 2$ and the Zariski Decomposition is given by 
$$P(u)=
\begin{cases}
    -K_{\mathbf{X}}-uX\sim (2-u)X+E\text{ if }u\in[0,1],\\
    -K_{\mathbf{X}}-uX-(u-1)E\sim(2-u)\pi^*(H)\text{ if }u\in[1,2].
\end{cases}
\text{ and }
N(u)=
\begin{cases}
    0\text{ if }u\in[0,1],\\
    (u-1)E\text{ if }u\in[1,2].
\end{cases}
$$
We apply Abban-Zhuang method to prove that $Q\not \in E\cong \mathcal{C}\times \DP^1$.
By \cite[Corollary 1.110]{Fano21} for any divisor $F$ such that $Q\in C_{X}(F)$ over $X$   we get:
\begin{align*}
S\big(W^{X}_{\bullet,\bullet}&;F\big)=\frac{3}{(-K_{\mathbf{X}})^3}\Bigg(\int_0^\tau\big(P(u)^{2}\cdot X\big)\cdot\mathrm{ord}_{Q}\Big(N(u)\big\vert_{X}\Big)du+\int_0^\tau\int_0^\infty \mathrm{vol}\big(P(u)\big\vert_{X}-vF\big)dvdu\Bigg)=\\
&= \frac{3}{12}\int_0^\tau\int_0^\infty \mathrm{vol}\big(P(u)\big\vert_{X}-vF\big)dvdu=\\
&= \frac{3}{12}\Bigg(\int_0^1\int_0^\infty \mathrm{vol}\big(-K_{X}-vF\big)dvdu+\int_1^2\int_0^\infty \mathrm{vol}\big(-K_{X}-(u-1)E|_X-vF\big)dvdu\Bigg)=\\
&= \frac{3}{12}\Bigg(\int_0^\infty \mathrm{vol}\big(-K_{X}-vF\big)dv+\int_1^2 (2-u)^3 \int_0^\infty \mathrm{vol}\big(-K_X-(u-1)E|_X-vF\big)dv\Bigg)=\\
&= \frac{3}{12}\Bigg(\int_0^\infty \mathrm{vol}\big(-K_{X}-vF\big)dv+
\int_1^2 (2-u)^3\int_0^\infty \mathrm{vol}\big(-K_{X}-wF\big)dw du\Bigg)=\\
&= \frac{3}{4}\Bigg(\frac{1}{3}\int_0^\infty \mathrm{vol}\big(-K_{X}-vF\big)dv + \int_1^2 (2-u)^3
\cdot \frac{1}{3} \int_0^\infty \mathrm{vol}\big(-K_{X}-wF\big)dw du\Bigg)=\\
&=\frac{3}{4}\Bigg(S_{X}(F) +  \frac{1}{4}\cdot S_{X}(F)\Bigg)=\frac{15}{16}S_{X}(F)\le \frac{15}{16}\cdot \frac{A_{X}(F)}{\delta_Q(X)}
\end{align*}
We see that $\delta_Q(\mathbf{X})\ge \frac{16}{15}\delta_Q(X)$.  
Thus, by Main Theorem if every fiber of $p_1$ has at most $\DA_2$ singularities the result follows.
\end{proof}

\section{Proof of Main Theorem via  Kento Fujita’s formulas}
\noindent In this work in order to find  $\delta$-invariants of Du Val del Pezzo surfaces we apply Abban--Zhuang theory and use Kento Fujita’s formulas. Let $X$ be a Du Val del Pezzo surface, and let $S$ be a minimal resolution of $X$. For a birational morphism  $f\colon\widetilde{X}\to X$ and $E$ be a prime divisor in $\widetilde{X}$ we say that $E$ is a prime divisor over $X$.
If~$E$~is $f$-exceptional, we say that $E$ is an~exceptional  prime divisor over~$X$.
We will denote the~subvariety $f(E)$ by $C_X(E)$. 
Let \index{$S_X(E)$}
$$
S_X(E)=\frac{1}{(-K_X)^2}\int_{0}^{\tau}\mathrm{vol}(f^*(-K_X)-vE)dv\text{ and }A_X (E) = 1 + \mathrm{ord}_E(K_{\widetilde{X}} - f^*(K_X)),
$$
where $\tau=\tau(E)$ is the~pseudo-effective threshold of $E$ with respect to $-K_X$.
Let $Q$ be a point in $X$. We can define a local $\delta$-invariant and a global $\delta$-invariant of $X$ as
$$
\delta_Q(X)=\inf_{\substack{E/X\\ Q\in C_X(E)}}\frac{A_X(E)}{S_X(E)}\text{ and }\delta(X)=\inf_{Q\in X}\delta_Q(X)
$$
where the~infimum runs over all prime divisors $E$ over the surface $X$ such that $Q\in C_X(E)$. Similarly, for the  surface $S$ and a point $P$ on $S$ we define local $\delta$-invariant and a global $\delta$-invariant of $S$ as
$$
\delta_P(S)=\inf_{\substack{F/S\\ P\in C_S(F)}}\frac{A_S(F)}{S_S(F)}
\text{
and }\delta(S)=\inf_{P\in S}\delta_P(S)$$
where $S_S(F)$ and $A_S(F)$ are defined as $S_X(E)$ and $A_X(E)$ above. It is clear that
$$\delta(X)=\delta(S)\text{ and }\delta_Q(X)=\inf_{P: Q=f(P)}\delta_P(S)$$
We now fix a point $P$ on $S$ and choose a smooth curve $A$ on $S$ containing $P$. 
Set
$$
\tau(A)=\mathrm{sup}\Big\{v\in\mathbb{R}_{\geqslant 0}\ \big\vert\ \text{the divisor  $-K_S-vA$ is pseudo-effective}\Big\}.
$$
For~$v\in[0,\tau]$, let $P(v)$ be the~positive part of the~Zariski decomposition of the~divisor $-K_S-vA$,
and let $N(v)$ be its negative part. 
Then we set $$
S\big(W^A_{\bullet,\bullet};P\big)=\frac{2}{K_S^2}\int_0^{\tau(A)} h(v) dv,
\text{ where }
h(v)=\big(P(v)\cdot A\big)\times\big(N(v)\cdot A\big)_P+\frac{\big(P(v)\cdot A\big)^2}{2}.
$$
It follows from {\cite[Theorem 1.7.1]{Fano21}} that:
\begin{equation}\label{estimation1}
    \delta_P(S)\geqslant\mathrm{min}\Bigg\{\frac{1}{S_S(A)},\frac{1}{S(W_{\bullet,\bullet}^A,P)}\Bigg\}.
\end{equation}
Unfortunately, this approach does not always give us a good estimation. If this is the case, we apply the generalization of this method. Let $\sigma: \widehat{S}\to S$ be a weighted blowup of the point $P$ on $S$. Suppose, in addition, that $\widehat{S}$ is a Mori Dream Space Then
\begin{itemize}
\item the~$\sigma$-exceptional curve $E_P\cong \DP^1$ such that $\sigma(E_P)=P$,
\item the~log pair $(\widehat{S},E_P)$ has purely log terminal singularities.
\end{itemize}
We write
$$
K_{E_P}+\Delta_{E_P}=\big(K_{\widehat{S}}+E_P\big)\big\vert_{E_P},
$$
where $\Delta_{E_P}$ is an~effective $\mathbb{Q}$-divisor on $E_P$ known as the~different of the~log pair $(\widehat{S},E_P)$.
Note that the~log pair $(E_P,\Delta_{E_P})$ has at most Kawamata log terminal singularities, and the~divisor $-(K_{E_P}+\Delta_{E_P})$ is $\sigma\vert_{E_P}$-ample.
\\Let $O$ be a point on $E_P$. 
Set
$$
\tau(E_P)=\mathrm{sup}\Big\{v\in\mathbb{R}_{\geqslant 0}\ \big\vert\ \text{the divisor  $\sigma^*(-K_S)-vE_P$ is pseudo-effective}\Big\}.
$$
For~$v\in[0,\tau]$, let $\widehat{P}(v)$ be the~positive part of the~Zariski decomposition of the~divisor $\sigma^*(-K_S)-vE_P$,
and let $\widehat{N}(v)$ be its negative part. 
Then we set $$
S\big(W^{E_P}_{\bullet,\bullet};O\big)=\frac{2}{K_{\widehat{S}}^2}\int_0^{\tau(E_P)} \widehat{h}(v) dv,
\text{ where }
\widehat{h}(v)=\big(\widehat{P}(v)\cdot E_P\big)\times\big(\widehat{N}(v)\cdot E_P\big)_O+\frac{\big(\widehat{P}(v)\cdot E_P\big)^2}{2}.
$$
Let
$A_{E_P,\Delta_{E_P}}(O)=1-\mathrm{ord}_{\Delta_{E_P}}(O)$.
It follows from {\cite[Theorem 1.7.9]{Fano21}} and {\cite[Corollary 1.7.12]{Fano21}} that
\begin{equation}
\label{estimation2}
\delta_P(S)\geqslant\mathrm{min}\Bigg\{\frac{A_S(E_P)}{S_S(E_P)},\inf_{O\in E_P}\frac{A_{E_P,\Delta_{E_P}}(O)}{S\big(W^{E_P}_{\bullet,\bullet};O\big)}\Bigg\},
\end{equation}
where the~infimum is taken over all points $O\in E_P$. Now for all the points $P$ on $S$ we know either values of local $\delta$-invariants or estimations of them. Taking the minimum we compute $\delta(S)$ --- the  global  $\delta$-invariant of $S$ and thus, $\delta(X)=\delta(S)$ --- the  global  $\delta$-invariant of $X$. We apply this method to minimal resolutions of Du Val del Pezzo surfaces to prove the Main Theorem. Throughout this work small circles correspond to a $(-1)$-curves and large circles correspond to a $(-2)$-curves on dual graphs. The  Du Val del Pezzo surfaces of degree $3$ were listed in \cite{CorayTsfasman88}.

\section{Du Val del Pezzo Surfaces of Degree $3$}
\noindent In \cite[Lemma 2.13]{Fano21} it was proven that $\delta(X)=\frac{3}{2}$ when $X$ is a smooth del Pezzo surface of degree $3$ which  contains an Eckardt point and $\delta(X)=\frac{27}{17}$ when $X$ is a smooth del Pezzo surface of degree $3$ which does not contain an Eckardt point. In this section, we compute  $\delta$-invariants  of Du Val del Pezzo surfaces of degree $3$.  
\begin{maintheorem*}
Let $X$ be a singular Du Val del Pezzo surface of degree $3$. Then the  $\delta$-invariant of $X$ is uniquely determined by the degree of $X$, the number of lines on $X$, and the type of singularities on $X$ which is given in the following table:\\
{\ \hspace*{0cm}\begin{minipage}{6cm}
 \renewcommand{\arraystretch}{1.1}
  \begin{longtable}{ | c | c | c | c | }
   \hline
   $K_X^2$ & $\#$ lines & $\mathrm{Sing}(X)$ & $\delta(X)$\\
  \hline\hline
\endhead 
 $3$ & $21$ & $\DA_1$ & $\frac{6}{5}$\\
\hline
  $3$ & $16$ & $2\DA_1$ & $\frac{6}{5}$\\
\hline
 $3$ & $12$ & $3\DA_1$ & $\frac{6}{5}$\\
\hline
 $3$ & $9$ & $4\DA_1$ & $\frac{6}{5}$\\
 \hline
 $3$ & $15$ & $\DA_2$ & $1$\\
\hline
 $3$ & $11$ & $\DA_2+\DA_1$ & $1$\\
\hline
$3$ & $8$ & $\DA_2+2\DA_1$ & $1$\\
\hline 
  \end{longtable}
  \end{minipage}
  \begin{minipage}{6.5cm}
 \renewcommand{\arraystretch}{1.1}
    \begin{longtable}{ | c | c | c | c | }
   \hline
   $K_X^2$ & $\#$ lines & $\mathrm{Sing}(X)$ & $\delta(X)$\\
  \hline\hline
\endhead 
$3$ & $7$ & $2\DA_2$ & $1$\\
\hline
$3$ & $5$ & $2\DA_2+\DA_1$ & $1$\\
\hline
$3$ & $3$ & $3\DA_2$ & $1$\\
\hline
$3$ & $10$ & $\DA_3$ & $\frac{9}{11}$\\
\hline
  $3$ & $7$ & $\DA_3+\DA_1$ & $\frac{9}{11}$\\
\hline
 $3$ & $5$ & $\DA_3+2\DA_1$ & $\frac{9}{11}$\\
\hline
  \end{longtable}
  \end{minipage}
    \begin{minipage}{3.5cm}
 \renewcommand{\arraystretch}{1.1}
    \begin{longtable}{ | c | c | c | c | }
   \hline
   $K_X^2$ & $\#$ lines & $\mathrm{Sing}(X)$ & $\delta(X)$\\
  \hline\hline
\endhead 
 $3$ & $6$ & $\DA_4$ & $\frac{9}{13}$\\
 \hline
 $3$ & $4$ & $\DA_4+\DA_1$ & $\frac{9}{13}$\\
\hline
 $3$ & $3$ & $\DA_5$ & $\frac{3}{5}$\\
\hline
$3$ & $2$ & $\DA_5+\DA_1$ & $\frac{3}{5}$\\
\hline
$3$ & $6$ & $\mathbb{D}_4$ & $\frac{3}{5}$\\
\hline
$3$ & $3$ & $\mathbb{D}_5$ & $ \frac{9}{19}$\\
\hline
$3$ & $1$ & $\mathbb{E}_6$ & $\frac{1}{3}$\\
\hline
  \end{longtable}
  \end{minipage}}
  \end{maintheorem*}
  \subsection{General results for degree $3$}
  \noindent Let  $X$ be a del Pezzo surface of degree $3$ with at  most Du Val singularities. Let $S$ be a weak resolution of $X$. We will call an image of a $(-1)$-curve in $S$ on $X$ {\bf a line} as was done in \cite{CheltsovProkhorov21}. 
\begin{lemma}\label{deg3-genpoint}
    Assume that the point $Q$ is not contained in any line that passes through a singular point of $X$. Then  $\delta_{Q}(X)\ge \frac{3}{2}$.
\end{lemma}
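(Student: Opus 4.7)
The plan is to pass to the minimal resolution $f \colon S \to X$ and prove $\delta_P(S) \geq \tfrac{3}{2}$ for each preimage $P$ of $Q$, using the identity $\delta_Q(X) = \min_{P \in f^{-1}(Q)} \delta_P(S)$ recorded in the previous subsection. I may assume $Q$ is a smooth point of $X$; then $P$ is a single point disjoint from the exceptional divisor of $f$, and the hypothesis translates into the statement that no $(-1)$-curve on $S$ through $P$ meets any $(-2)$-curve (if $L$ is such a $(-1)$-curve meeting a $(-2)$-curve $E$ on $S$, then $f(L)$ would be a line on $X$ through $Q$ passing through the singular point $f(E)$). Consequently the local configuration of negative curves near $P$ is identical to that near a point on a smooth cubic surface.

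I would then apply the Abban--Zhuang estimate (\ref{estimation1}) with a smooth curve $A$ through $P$, adapting the argument of \cite[Lemma 2.13]{Fano21} for smooth cubic surfaces. When a line on $X$ passes through $Q$, the hypothesis ensures this line avoids the singularities, so its strict transform is a $(-1)$-curve on $S$ disjoint from every $(-2)$-curve, and I take $A$ to be this curve. When no line on $X$ passes through $Q$, I take $A$ to be a smooth member of a suitable base-point-free pencil through $P$, again disjoint from the exceptional locus. In either case one computes the pseudo-effective threshold $\tau(A)$ and the Zariski decomposition $-K_S - vA = P(v) + N(v)$, aiming to show the two bounds $S_S(A) \leq \tfrac{2}{3}$ and $S(W^A_{\bullet,\bullet};P) \leq \tfrac{2}{3}$; the inequality (\ref{estimation1}) then yields $\delta_P(S) \geq \tfrac{3}{2}$.

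The main obstacle is the Zariski-decomposition bookkeeping in the presence of the Du Val singularities. A $(-1)$-curve that enters $N(v)$ may itself meet some $(-2)$-curves away from $P$, so the global intersection numbers entering $S_S(A)$ and $(P(v)\cdot A)$ depend on the precise singularity configuration of $X$. The key simplification from the hypothesis is that no $(-2)$-curve passes through $P$, so the local term $(N(v)\cdot A)_P$ in $\widehat{h}$ can only come from $(-1)$-curves through $P$, exactly as in the smooth cubic case; the global contributions can then be bounded uniformly across all the singularity types in the Main Theorem. Once both target bounds are verified, the claim $\delta_Q(X) \geq \tfrac{3}{2}$ follows.
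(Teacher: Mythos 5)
Your proposal is correct and follows essentially the same route as the paper, whose entire proof is the observation that the statement follows from the proof of \cite[Lemma 2.13]{Fano21}: the hypothesis forces every $(-1)$-curve through the (necessarily smooth) preimage point $P$ on the minimal resolution $S$ to be disjoint from all $(-2)$-curves, so the flag computation for the smooth cubic surface applies verbatim. Your write-up merely makes explicit the reduction and the bookkeeping that the paper leaves implicit.
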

\begin{proof}
Follows from the proof  of Lemma {\cite[2.13]{Fano21}}.
\end{proof}
$ $
\begin{lemma}\label{deg3-nearA1points}
Suppose $P$ belongs to a $(-1)$-curve $A$ and there exist $(-1)$-curves and $(-2)$-curves   which form the following dual graph:\par
\begin{figure}[h!]
    \centering
 \includegraphics[width=16cm]{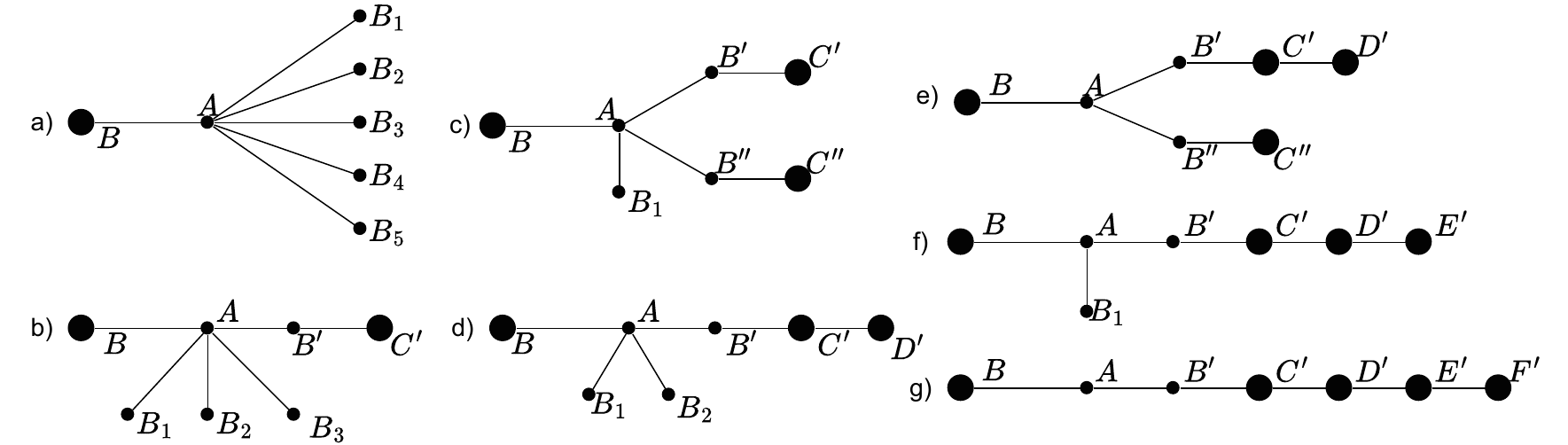}
    \caption{Dual graph: $(-K_S)^2=3$ and $\delta_P(S)=\frac{27}{17}$}
\end{figure}
 \par Then $\tau(A)=\frac{4}{3}$ and the Zariski Decomposition of the divisor $-K_S-vA$ is given by:{
{\allowdisplaybreaks\begin{align*}
&{\text{\bf a). }}& P(v)=\begin{cases}-K_S-vA-\frac{v}{2}B\text{ if }v\in[0,1],\\
-K_S-vA-\frac{v}{2}B-(v-1)(B_1+B_2+B_3+B_4+B_5)\text{ if }v\in\big[1,\frac{4}{3}\big].
\end{cases}\\&&N(v)=\begin{cases}\frac{v}{2}B\text{ if }v\in[0,1],\\
\frac{v}{2}B+(v-1)(B_1+B_2+B_3+B_4+B_5)\text{ if }v\in\big[1,\frac{4}{3}\big].
\end{cases}\\
&{\text{\bf b). }} & P(v)=\begin{cases}-K_S-vA-\frac{v}{2}B\text{ if }v\in[0,1],\\
-K_S-vA-\frac{v}{2}B-(v-1)(2B'+C'+B_1+B_2+B_3)\text{ if }v\in\big[1,\frac{4}{3}\big].
\end{cases}\\&& N(v)=\begin{cases}\frac{v}{2}B\text{ if }v\in[0,1],\\
\frac{v}{2}B+(v-1)(2B'+C'+B_1+B_2+B_3)\text{ if }v\in\big[1,\frac{4}{3}\big].
\end{cases}\\
&{\text{\bf c). }}& P(v)=\begin{cases}-K_S-vA-\frac{v}{2}B\text{ if }v\in[0,1],\\
-K_S-vA-\frac{v}{2}B-(v-1)(B_1+2B'+C'+2B''+C'')\text{ if }v\in\big[1,\frac{4}{3}\big].
\end{cases}\\&& N(v)=\begin{cases}\frac{v}{2}B\text{ if }v\in[0,1],\\
\frac{v}{2}B+(v-1)(B_1+2B'+C'+2B''+C'')\text{ if }v\in\big[1,\frac{4}{3}\big].
\end{cases}\\
&{\text{\bf d). }} & P(v)=\begin{cases}-K_S-vA-\frac{v}{2}B\text{ if }v\in[0,1],\\
-K_S-vA-\frac{v}{2}B-(v-1)(B_1+B_2+3B'+2C'+D')\text{ if }v\in\big[1,\frac{4}{3}\big].
\end{cases}\\&& N(v)=\begin{cases}\frac{v}{2}B\text{ if }v\in[0,1],\\
\frac{v}{2}B+(v-1)(B_1+B_2+3B'+2C'+D')\text{ if }v\in\big[1,\frac{4}{3}\big].
\end{cases}\\
&{\text{\bf e). }}& P(v)=
\begin{cases}-K_S-vA-\frac{v}{2}B\text{ if }v\in[0,1],\\
-K_S-vA-\frac{v}{2}B-(v-1)(3B'+2C'+D'+2B''+C'')\text{ if }v\in\big[1,\frac{4}{3}\big].
\end{cases}\\&& N(v)=\begin{cases}\frac{v}{2}B_1\text{ if }v\in[0,1],\\
\frac{v}{2}B+(v-1)(3B'+2C'+D'+2B''+C'')\text{ if }v\in\big[1,\frac{4}{3}\big].
\end{cases}\\
&{\text{\bf f). }}& P(v)=
\begin{cases}-K_S-vA-\frac{v}{2}B\text{ if }v\in[0,1],\\
-K_S-vA-\frac{v}{2}B-(v-1)(B_1+4B'+3C'+2D'+E')\text{ if }v\in\big[1,\frac{4}{3}\big].
\end{cases}\\&& N(v)=\begin{cases}\frac{v}{2}B\text{ if }v\in[0,1],\\
\frac{v}{2}B+(v-1)(B_1+4B'+3C'+2D'+E')\text{ if }v\in\big[1,\frac{4}{3}\big].
\end{cases}\\
&{\text{\bf g). }} & P(v)=\begin{cases}-K_S-vA-\frac{v}{2}B\text{ if }v\in[0,1],\\
-K_S-vA-\frac{v}{2}B-(v-1)(5B'+4C'+3D'+2E'+F')\text{ if }v\in\big[1,\frac{4}{3}\big].
\end{cases}\\&& N(v)=\begin{cases}\frac{v}{2}B\text{ if }v\in[0,1],\\
\frac{v}{2}B+(v-1)(5B'+4C'+3D'+2E'+F')\text{ if }v\in\big[1,\frac{4}{3}\big].
\end{cases}
\end{align*}}}

Moreover:
$$(P(v))^2=\begin{cases}
3-2v-\frac{v^2}{2}\text{ if }v\in[0,1],\\
\frac{(4-3v)^2}{2}\text{ if }v\in\big[1,\frac{4}{3}\big].
\end{cases}
P(v)\cdot A=\begin{cases}
1+\frac{v}{2}\text{ if }v\in[0,1],\\
\frac{3(4-3v)}{2}\text{ if }v\in\big[1,\frac{4}{3}\big].
\end{cases}$$
In this case $\delta_P(S)=\frac{27}{17}$ if $P\in A\backslash (B\cup B'\cup B'')$.
\end{lemma}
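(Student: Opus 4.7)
The plan is to verify in each of cases (a)--(g) that the displayed formulas give the Zariski decomposition of $-K_S-vA$, and then to invoke the estimate \eqref{estimation1}. From the dual graph, $A$ is a $(-1)$-curve through $P$ with $A^2=-1$ and $-K_S\cdot A=1$, and $B$ is a $(-2)$-curve with $A\cdot B=1$ and $-K_S\cdot B=0$. The other labeled curves $B_i,B',C',\dots$ form a Du Val-type subconfiguration whose intersection numbers can be read off from the graph: the curves $B'$ and $B''$ (when present) are $(-1)$-curves meeting $A$ that sit at the head of a $(-2)$-chain, whereas the $B_i$ are $(-1)$-curves meeting $A$ with no such chain behind them.

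\textbf{Zariski decomposition on $[0,1]$.} I take $P(v)=-K_S-vA-\tfrac{v}{2}B$ and $N(v)=\tfrac{v}{2}B$. The coefficient $v/2$ is forced by $P(v)\cdot B=0$. Direct intersection checks against every other curve $F$ in the graph give $P(v)\cdot F\geq 0$ on $[0,1]$; in particular $P(v)\cdot A=1+v/2>0$ and $P(v)\cdot F=1-v\geq 0$ for each $(-1)$-curve $F$ meeting $A$. Hence $P(v)$ is nef and the proposed pair is the Zariski decomposition on this range.

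\textbf{Zariski decomposition on $[1,4/3]$.} At $v=1$, every $(-1)$-curve $F$ meeting $A$ becomes orthogonal to $P(v)$; to preserve orthogonality for $v>1$ one must add these curves to $N(v)$ together with the $(-2)$-chain behind them. In each case I set $N(v)=\tfrac{v}{2}B+(v-1)D_{\mathrm{case}}$ with $D_{\mathrm{case}}$ as in the statement; the internal coefficients of $D_{\mathrm{case}}$ are the unique solution of the linear system $P(v)\cdot F=0$ for $F$ in the support, existence and uniqueness being guaranteed by negative-definiteness of the Du Val intersection matrix. The weights $1,2,\dots,5$ appearing in the longer chains of cases (d)--(g) are exactly the standard fundamental-cycle coefficients on $\DA_n$-chains. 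Expanding yields the stated formulas for $P(v)^2$ and $P(v)\cdot A$, and $P(4/3)\cdot A=0$ gives $\tau(A)=4/3$.

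\textbf{Computation of $\delta_P(S)$.} Integrating,
\[
S_S(A)=\frac{1}{3}\left(\int_0^1\Bigl(3-2v-\tfrac{v^2}{2}\Bigr)\,dv+\int_1^{4/3}\tfrac{(4-3v)^2}{2}\,dv\right)=\frac{1}{3}\Bigl(\tfrac{11}{6}+\tfrac{1}{18}\Bigr)=\tfrac{17}{27},
\]
so $1/S_S(A)=27/17$. For $P\in A\setminus(B\cup B'\cup B'')$, either $P$ is off every component of $N(v)$ meeting $A$ --- whence $(N(v)\cdot A)_P=0$, $h(v)=\tfrac12(P(v)\cdot A)^2$, and a parallel integration gives $S(W^A_{\bullet,\bullet};P)=\tfrac{11}{18}$ --- or $P$ lies on some $B_i$ appearing in $D_{\mathrm{case}}$ with coefficient $v-1$, in which case the additional term $(P(v)\cdot A)(v-1)$ on $[1,4/3]$ contributes $\tfrac{1}{36}$ and one gets $S(W^A_{\bullet,\bullet};P)=\tfrac{17}{27}$. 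In both subcases $1/S(W^A_{\bullet,\bullet};P)\geq 27/17$, so $\delta_P(S)\geq 27/17$ by \eqref{estimation1}; the reverse inequality is immediate from $\delta_P(S)\leq A_S(A)/S_S(A)=27/17$. The main obstacle is the bookkeeping in the second step: identifying the correct support of $N(v)$ in each of the seven dual graphs, solving the associated linear system for the coefficients, and verifying that the resulting $P(v)$ is nef on $[1,4/3]$ against curves lying \emph{outside} the support of $N(v)$.
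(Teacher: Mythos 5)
Your proposal is correct and follows essentially the same route as the paper: justify the stated Zariski decomposition, compute $S_S(A)=\tfrac{17}{27}$, and bound $S(W^A_{\bullet,\bullet};P)$ by $\tfrac{17}{27}$ for the allowed points $P$, then apply \eqref{estimation1} together with $\delta_P(S)\le 1/S_S(A)$. One small arithmetic slip: when $P$ lies on a $B_i$ the extra contribution is $\tfrac{2}{3}\int_1^{4/3}\tfrac{3(4-3v)}{2}(v-1)\,dv=\tfrac{1}{54}$, not $\tfrac{1}{36}$ (your stated total $\tfrac{11}{18}+\tfrac{1}{54}=\tfrac{17}{27}$ is nevertheless the right one, matching the paper's bound $h(v)\le \tfrac{3(4-3v)(8-5v)}{8}$ on $\big[1,\tfrac43\big]$).
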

\begin{proof}
The Zariski Decomposition in part a). follows from $$-K_S-vA\sim_{\DR} \Big(\frac{4}{3}-v\Big)A+\frac{1}{3}\Big(2B+B_1+B_2+B_3+B_4+B_5\Big).$$ A similar statement holds in other parts.
We have 
$S_S(A)=\frac{17}{27}$. Thus, $\delta_P(S)\le \frac{27}{17}$ for $P\in A$. Note that for $P\in A\backslash B$   we have:
$$h(v)\le \begin{cases}
\frac{(2+v)^2}{8}\text{ if }v\in[0,1],\\
\frac{3 (4 - 3 v) (8 - 5 v)}{8}\text{ if }v\in\big[1,\frac{4}{3}\big].
\end{cases}$$
So 
$S(W_{\bullet,\bullet}^{A};P)\le \frac{17}{27}$.
Thus, $\delta_P(S)=\frac{27}{17}$.
\end{proof}

\begin{remark}\label{deg3-nearA1pointsRemark}
Minor additional computation shows that in cases {\bf b), c)} we have $\delta_P(S)\ge\frac{35}{54}$ for all points $P\in A$.
\end{remark}
\begin{lemma}\label{deg3-nearA2point}
Suppose $P$ belongs to a $(-2)$-curve $A$ and there exist $(-1)$-curves and $(-2)$-curves   which form the following dual graph:
\begin{figure}[h!]
    \centering
 \hspace*{-0.5cm}\includegraphics[width=16cm]{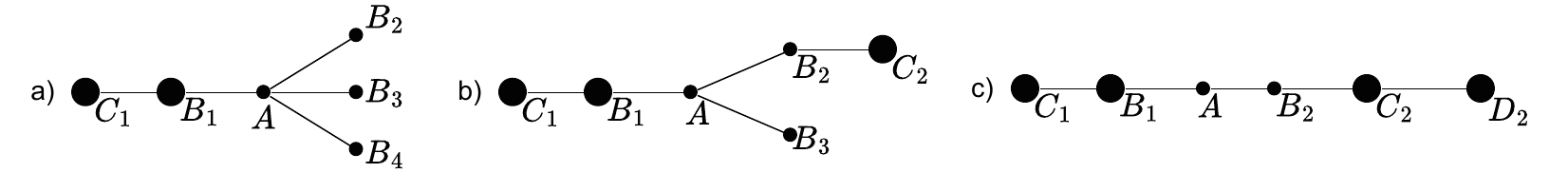}
    \caption{Dual graph: $(-K_S)^2=3$ and $\delta_P(S)=\frac{3}{2}$}
\end{figure}
\par
 Then  $\tau(A)=\frac{3}{2}$ and the Zariski Decomposition of the divisor $-K_S-vA$ is given by:
{
{\allowdisplaybreaks\begin{align*}
&{\text{\bf a). }} &
P(v)=\begin{cases}
-K_S-vA-\frac{v}{3}(2B_1+C_1)\text{ if }v\in[0,1],\\
-K_S-vA-\frac{v}{3}(2B_1+C_1)-(v-1)(B_2+B_3+B_4)\text{ if }v\in\big[1,\frac{3}{2}\big].
\end{cases}
\\&&
N(v)=\begin{cases}\frac{v}{3}(2B_1+C_1)\text{ if }v\in[0,1],\\
\frac{v}{3}(2B_1+C_1)+(v-1)(B_2+B_3+B_4)\text{ if }v\in\big[1,\frac{3}{2}\big].
\end{cases}\\
&{\text{\bf b). }} &
P(v)=\begin{cases}
-K_S-vA-\frac{v}{3}(2B_1+C_1)\text{ if }v\in[0,1],\\
-K_S-vA-\frac{v}{3}(2B_1+C_1)-(v-1)(2B_2+C_2+B_3)\text{ if }v\in\big[1,\frac{3}{2}\big].
\end{cases}
\\&&
N(v)=\begin{cases}\frac{v}{3}(2B_1+C_1)\text{ if }v\in[0,1],\\
\frac{v}{3}(2B_1+C_1)+(v-1)(2B_2+C_2+B_3)\text{ if }v\in\big[1,\frac{3}{2}\big].
\end{cases}\\
&{\text{\bf c). }} &
P(v)=\begin{cases}
-K_S-vA-\frac{v}{3}(2B_1+C_1)\text{ if }v\in[0,1],\\
-K_S-vA-\frac{v}{3}(2B_1+C_1)-(v-1)(3B_2+2C_2+D_2)\text{ if }v\in\big[1,\frac{3}{2}\big].
\end{cases}
\\&&
N(v)=\begin{cases}\frac{v}{3}(2B_1+C_1)\text{ if }v\in[0,1],\\
\frac{v}{3}(2B_1+C_1)+(v-1)(3B_2+2C_2+D_2)\text{ if }v\in\big[1,\frac{3}{2}\big].
\end{cases}
\end{align*}}}
Moreover, 
$$(P(v))^2=\begin{cases}3-2v-\frac{v^2}{3}\text{ if }v\in[0,1],\\
\frac{2(3-2v)^2}{2}\text{ if }v\in\big[1,\frac{3}{2}\big].
\end{cases}P(v)\cdot A=\begin{cases}
1+\frac{v}{3}\text{ if }v\in[0,1],\\
4(1-\frac{2v}{3})\text{ if }v\in\big[1,\frac{3}{2}\big].
\end{cases}$$
In this case $\delta_P(S)=\frac{3}{2}$ if $P\in A\backslash B_1$.
\end{lemma}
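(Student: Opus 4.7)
My plan is to adapt the argument of Lemma \ref{deg3-nearA1points} to the $\mathbb{A}_2$ configuration adjacent to $A$. The proof naturally splits into three parallel tasks: verifying the Zariski decomposition in each of cases (a), (b), (c); reading off the intersection-theoretic quantities $(P(v))^2$ and $P(v)\cdot A$; and assembling the estimate on $\delta_P(S)$ via (\ref{estimation1}).

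For the decomposition on $[0,1]$, the key is the linear-equivalence identity analogous to the one used in the $\mathbb{A}_1$-lemma, namely a rearrangement of the form
$$
-K_S - vA \sim_{\mathbb{R}} (\tau(A)-v)\,D_0 + \tfrac{v}{3}(2B_1 + C_1)
$$
for an appropriate nef $\mathbb{Q}$-divisor $D_0$ supported on curves read off the dual graph. Nefness of the asserted $P(v)$ is then checked by intersecting with each irreducible component of $\mathrm{Supp}\,N(v)$ and verifying that these intersections vanish (so $P(v)\perp N(v)$), while negative-definiteness of $N(v)$ is immediate from the $\mathbb{A}_2$ intersection matrix. For $v\in\big[1,\tfrac{3}{2}\big]$, further curves enter the negative part as dictated by the case: in (a) the disjoint curves $B_2, B_3, B_4$; in (b) the $\mathbb{A}_3$-chain $2B_2 + C_2 + B_3$; in (c) the $\mathbb{D}_4$-type configuration $3B_2 + 2C_2 + D_2$. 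In each sub-case one verifies the new Zariski decomposition by the same nefness/orthogonality checks; the identity $P(3/2)\equiv 0$ then confirms $\tau(A)=\tfrac{3}{2}$, and the claimed expressions for $(P(v))^2$ and $P(v)\cdot A$ follow from routine intersection calculus.

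Having the decomposition in hand, I would integrate $(P(v))^2$ over $[0,\tfrac{3}{2}]$ to get $S_S(A)=\tfrac{2}{3}$, yielding the upper bound $\delta_P(S)\le A_S(A)/S_S(A)=\tfrac{3}{2}$. For the matching lower bound, the crucial observation is that for $P\in A\setminus B_1$ the curve $A$ meets $\mathrm{Supp}\,N(v)$ only at the point $A\cap B_1\ne P$, since in each dual graph the remaining components of $N(v)$ are disjoint from $A$. Hence $(N(v)\cdot A)_P=0$, so $h(v)=\tfrac12(P(v)\cdot A)^2$, and an elementary integration using the stated formula for $P(v)\cdot A$ gives $S(W^{A}_{\bullet,\bullet};P)\le\tfrac{2}{3}$. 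Plugging into (\ref{estimation1}) yields $\delta_P(S)\ge\tfrac{3}{2}$, matching the upper bound.

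The main obstacle will be the careful verification of the Zariski decomposition in case (c), where the negative part on $[1,\tfrac{3}{2}]$ carries non-diagonal multiplicities $3,2,1$ on a $\mathbb{D}_4$-type subgraph; one must track every intersection number to confirm that the interior curves remain orthogonal to $P(v)$ and that no further component needs to be added. Cases (a) and (b) are structurally simpler analogs of the $\mathbb{A}_1$-treatment, and the final $S$-integrals are short, so the bulk of the work is the combinatorial bookkeeping at the Zariski step.
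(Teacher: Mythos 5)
Your overall strategy coincides with the paper's: verify the Zariski decomposition from a linear-equivalence identity, integrate $(P(v))^2$ to get $S_S(A)=\frac{2}{3}$ and the upper bound $\delta_P(S)\le\frac32$, then bound $S(W^A_{\bullet,\bullet};P)$ via (\ref{estimation1}). The gap is in the lower bound. Your key assertion --- that for $P\in A\setminus B_1$ the remaining components of $N(v)$ are disjoint from $A$, so $(N(v)\cdot A)_P=0$ and $h(v)=\frac12(P(v)\cdot A)^2$ --- is false. The curves that enter the negative part at $v=1$ do so precisely because they meet $A$: since $P(v)\cdot A$ jumps in slope from $1+\frac v3$ to $4(1-\frac{2v}{3})$ at $v=1$, the coefficients force $\sum(\text{mult. in }N(v))\cdot(B_i\cdot A)=3$, i.e.\ $B_2\cdot A=B_3\cdot A=B_4\cdot A=1$ in case (a), and $B_2\cdot A=1$ with $N(v)$-coefficient $3(v-1)$ in case (c). (Also, as a side remark, these relations show $A$ is a $(-1)$-curve with $-K_S\cdot A=1$, and the configuration in (c) is a chain $B_2-C_2-D_2$ attached to $A$, not a $\mathbb D_4$-type graph.) So a point such as $P=A\cap B_2$ lies in $A\setminus B_1$ but has $(N(v)\cdot A)_P>0$ for $v>1$, and your formula for $h(v)$ undercounts exactly there.

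This omission is not harmless, because there is no slack at those points. In case (c) at $P=A\cap B_2$ one has $(N(v)\cdot A)_P=3(v-1)$, hence
$h(v)\le 3(v-1)\cdot\frac{4(3-2v)}{3}+\frac{8(3-2v)^2}{9}=\frac{4(3-2v)(5v-3)}{9}$ on $[1,\frac32]$,
and together with $h(v)\le\frac{(3+v)^2}{18}$ on $[0,1]$ this integrates to $S(W^A_{\bullet,\bullet};P)\le\frac23\big(\frac{37}{54}+\frac{17}{54}\big)=\frac23$ --- equality, not strict inequality, so the cross term $(P(v)\cdot A)\times(N(v)\cdot A)_P$ must be carried in full (this is exactly the bound the paper uses). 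Your simplified $h(v)=\frac12(P(v)\cdot A)^2$ gives $\frac59$ and appears to leave room, but it simply does not apply to the points $A\cap B_2$, $A\cap B_3$, $A\cap B_4$, which are genuine points of $A\setminus B_1$ that the lemma must cover. The fix is routine (include the cross term and redo the integral case by case), but as written your argument fails at precisely the extremal points.
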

\begin{proof}
The Zariski Decomposition in part a). follows from $$-K_S-vA\sim_{\DR} \Big(\frac{3}{2}-v\Big)A+\frac{1}{2}\Big(2B_1+C_1+B_2+B_3+B_4\Big).$$ A similar statement holds in other parts.
We have $S_S(A)=\frac{2}{3}$. Thus, $\delta_P(S)\le \frac{3}{2}$ for $P\in A$. Note that for $P\in A\backslash B_1$ we have:
$$h(v)\le \begin{cases}
\frac{(3+v)^2}{18}\text{ if }v\in[0,1],\\
\frac{4 (3 - 2 v) (5 v - 3)}{9}\text{ if }v\in\big[1,\frac{3}{2}\big].
\end{cases}$$
So 
$S(W_{\bullet,\bullet}^{A};P)\le\frac{2}{3}$. Thus, $\delta_P(S)=\frac{3}{2}$ if $P\in A\backslash B_1$.
\end{proof}
\begin{lemma}\label{deg3-nearA3points}
Suppose $P$ belongs to a $(-1)$-curve $A$ and there exist $(-1)$-curves and $(-2)$-curves   which form the following dual graph:
\begin{figure}[h!]
    \centering
\includegraphics[width=6cm]{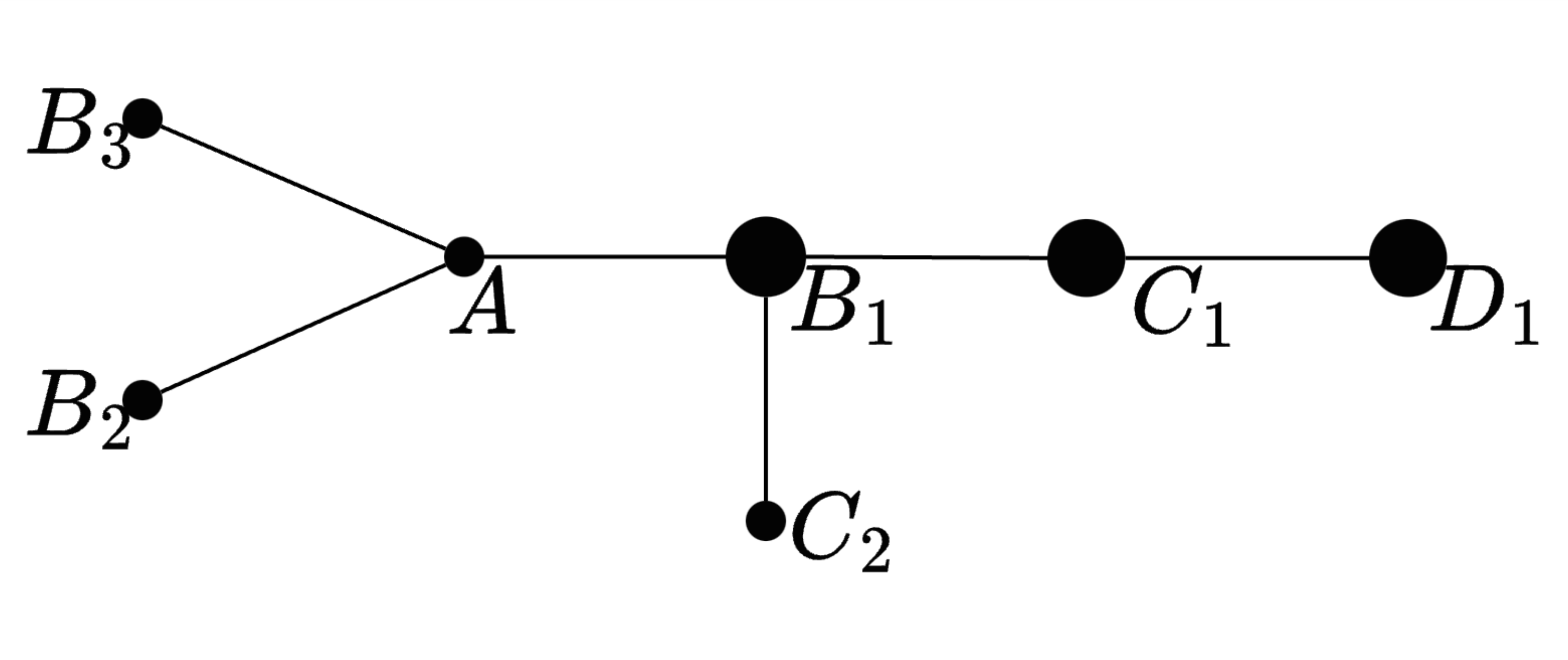}
    \caption{Dual graph: $(-K_S)^2=3$ and $\delta_P(S)=\frac{54}{37}$}
\end{figure}
\par Then  $\tau(A)=\frac{3}{2}$ and the Zariski Decomposition of the divisor $-K_S-vA$ is given by:
   {
{\allowdisplaybreaks\begin{align*}
&&P(v)=\begin{cases}-K_S-vA-\frac{v}{4}(3B_1+2C_1+D_1)\text{ if }v\in[0,1],\\
-K_S-vA-\frac{v}{4}(3B_1+2C_1+D_1)-(v-1)(B_2+B_3)\text{ if }v\in\big[1,\frac{4}{3}\big],\\
-K_S-vA-(v-1)(3B_1+2C_1+D_1+B_2+B_3)-(3v-4)C_2\text{ if }v\in\big[\frac{4}{3},\frac{3}{2}\big].
\end{cases}\\&&N(v)=\begin{cases}\frac{v}{4}(3B_1+2C_1+D_1)\text{ if }v\in[0,1],\\
\frac{v}{4}(3B_1+2C_1+D_1)+(v-1)(B_2+B_3)\text{ if }v\in\big[1,\frac{4}{3}\big],\\
(v-1)(3B_1+2C_1+D_1+B_2+B_3)+(3v-4)C_2\text{ if }v\in\big[\frac{4}{3},\frac{3}{2}\big].
\end{cases}
\end{align*}}}
Moreover, 
$$(P(v))^2=\begin{cases}3-2v-\frac{v^2}{4}\text{ if }v\in[0,1],\\
\frac{(10-7v)(2-v)}{4}\text{ if }v\in\big[1,\frac{4}{3}\big],\\
(3-2v)^2\text{ if }v\in\big[\frac{4}{3},\frac{3}{2}\big].
\end{cases}P(v)\cdot A=\begin{cases}1+\frac{v}{4}\text{ if }v\in[0,1],\\
3-\frac{7v}{4}\text{ if }v\in\big[1,\frac{4}{3}\big],\\
2(3-2v)\text{ if }v\in\big[\frac{4}{3},\frac{3}{2}\big].
\end{cases}$$
In this case $\delta_P(S)=\frac{54}{37}$\text{ if }$P\in A\backslash B_1$.
\end{lemma}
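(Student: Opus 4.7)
The plan is to mirror the strategy used in Lemmas for the $A_1$ and $A_2$ configurations, adapted to handle the additional breakpoint introduced by the $A_3$ chain.

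First, I would establish $\tau(A)=\tfrac{3}{2}$ by exhibiting the decomposition
$$
-K_S-vA\sim_{\DR}\Big(\tfrac{3}{2}-v\Big)A+\tfrac{1}{2}\big(3B_1+2C_1+D_1\big)+(B_2+B_3)+\tfrac{1}{2}\big(2(3B_1+2C_1+D_1)+\text{adjustments involving }C_2\big),
$$
or more cleanly by writing $-K_S$ as a non-negative combination of $A$ together with all the exceptional curves appearing in the graph, which gives an upper bound on $\tau(A)$; the lower bound comes from the explicit $P(v)$ being nef up to $v=\tfrac{3}{2}$.

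Next, I would verify the three-phase Zariski decomposition directly. For each of the intervals $[0,1]$, $[1,\tfrac{4}{3}]$, $[\tfrac{4}{3},\tfrac{3}{2}]$, I check two things: (i) $P(v)$ intersects each component of $N(v)$ with value zero, i.e.\ orthogonality, and (ii) $P(v)\cdot\Gamma\ge 0$ for every $(-1)$- and $(-2)$-curve $\Gamma$ supported in the dual graph, i.e.\ nefness. The coefficients $\tfrac{v}{4}(3B_1+2C_1+D_1)$ in phase one are forced by the $A_3$-intersection matrix orthogonality with $B_1,C_1,D_1$, exactly as in the analogous $A_2$ case of Lemma~3.3. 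The breakpoint $v=1$ occurs when $P(v)\cdot B_i=0$ for $i=2,3$, and $v=\tfrac{4}{3}$ occurs when $P(v)\cdot C_2=0$; these identify precisely when each new component must be added to $N(v)$.

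Having confirmed the decomposition, the intersection numbers $(P(v))^2$ and $P(v)\cdot A$ reduce to a routine computation using the intersection matrix of the configuration. Integrating gives
$$
S_S(A)=\frac{1}{3}\int_0^{3/2}(P(v)\cdot A)\,dv=\frac{37}{54},
$$
so $\delta_P(S)\le\tfrac{54}{37}$ for $P\in A$. For the matching lower bound I would apply \eqref{estimation1}: for $P\in A\setminus B_1$ we have $(N(v)\cdot A)_P=0$ on $[0,1]$ (since the only negative component meeting $A$ is $B_1$, and $P\notin B_1$), while on $[1,\tfrac{4}{3}]$ and $[\tfrac{4}{3},\tfrac{3}{2}]$ the contributions from $B_2$, $B_3$ (and then $C_2$) to $(N(v)\cdot A)_P$ at $P$ are controlled by $\mathrm{ord}_P(N(v)|_A)$, which one estimates by bounding $h(v)$ on each subinterval using $(P(v)\cdot A)^2/2+(P(v)\cdot A)\cdot(N(v)\cdot A)_P$. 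A direct upper bound on $h(v)$ then yields $S(W^A_{\bullet,\bullet};P)\le\tfrac{37}{54}$, and combining with $S_S(A)=\tfrac{37}{54}$ gives $\delta_P(S)\ge\tfrac{54}{37}$ by \eqref{estimation1}.

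The main obstacle is the bookkeeping at the second breakpoint $v=\tfrac{4}{3}$: one has to confirm that $C_2$ enters the negative part with coefficient $3v-4$ (not some other linear function) and simultaneously that the coefficients of $3B_1+2C_1+D_1$ and $B_2+B_3$ jump from $\tfrac{v}{4}$ and $v-1$ to $v-1$ in the third phase — i.e.\ that orthogonality is re-balanced against the new component. This is a $5\times 5$ linear system for the coefficients, and the cleanest way to handle it is to solve it once and then verify that the resulting $P(v)$ is nef on $[\tfrac{4}{3},\tfrac{3}{2}]$, after which the $S$-integral splits into three explicit polynomial pieces and the arithmetic $\tfrac{37}{54}$ falls out.
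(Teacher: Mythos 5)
Your overall strategy is exactly the paper's: exhibit $-K_S-vA$ as an effective $\mathbb{R}$-combination to get $\tau(A)=\tfrac{3}{2}$ (the paper uses $-K_S-vA\sim_{\mathbb{R}}(\tfrac{3}{2}-v)A+\tfrac{1}{2}(3B_1+2C_1+D_1+B_2+B_3+C_2)$, which is the clean version of what you gesture at), verify the three-phase decomposition by orthogonality and nefness, compute $S_S(A)=\tfrac{37}{54}$ for the upper bound on $\delta_P$, and then bound $h(v)$ piecewise to get $S(W^A_{\bullet,\bullet};P)\le\tfrac{37}{54}$ and conclude via the estimate \eqref{estimation1}. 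The treatment of the breakpoints and of the contributions of $B_2,B_3,C_2$ to $(N(v)\cdot A)_P$ is consistent with the paper's $h(v)$ bounds.

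There is, however, one concrete error: you compute $S_S(A)$ as $\tfrac{1}{3}\int_0^{3/2}\big(P(v)\cdot A\big)\,dv$. By the definition used throughout the paper, $S_S(A)=\tfrac{1}{(-K_S)^2}\int_0^{\tau}\mathrm{vol}(-K_S-vA)\,dv=\tfrac{1}{3}\int_0^{3/2}\big(P(v)\big)^2\,dv$; the integrand is the self-intersection of the positive part, not its intersection with $A$. The integral you wrote evaluates to $\tfrac{1}{3}\cdot\tfrac{3}{2}=\tfrac{1}{2}$, not $\tfrac{37}{54}$, so as written the key numerical value does not follow from your computation. With the correct integrand one gets $\tfrac{1}{3}\big(\tfrac{23}{12}+\tfrac{43}{324}+\tfrac{1}{162}\big)=\tfrac{37}{54}$, and the rest of your argument then goes through as in the paper. (The quantity $P(v)\cdot A$ is of course still needed, but only inside $h(v)$.)
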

\begin{proof}
The Zariski Decomposition follows from $$-K_S-vA\sim_{\DR} \Big(\frac{3}{2}-v\Big)A+\frac{1}{2}\Big(3B_1+2C_1+D_1+B_2+B_3+C_2\Big).$$
We have $S_S(A)=\frac{37}{54}$.
Thus, $\delta_P(S)\le \frac{54}{37}$ for $P\in A$. Moreover, for $P\in A\backslash B_1$ we have:
$$h(v) \le \begin{cases}
\frac{(v + 4)^2}{32}\text{ if }v\in[0,1],\\
\frac{(12 - 7 v) (v + 4)}{32} \text{ if }v\in\big[1,\frac{4}{3}\big],\\
2 (3 - 2 v) (2 - v)\text{ if }v\in\big[\frac{4}{3},\frac{3}{2}\big].
\end{cases}$$
So  
$S(W_{\bullet,\bullet}^{A};P) \le  \frac{7}{12}\le \frac{37}{54}$. Thus, $\delta_P(S)=\frac{54}{37}$ if $P\in A\backslash B_1$.
\end{proof}
\begin{lemma}\label{deg3-3625nearA4points}
Suppose $P$ belongs to a $(-1)$-curve $A$ and there exist $(-1)$-curves and $(-2)$-curves   which form the following dual graph:
\begin{figure}[h!]
    \centering
\includegraphics[width=6.4cm]{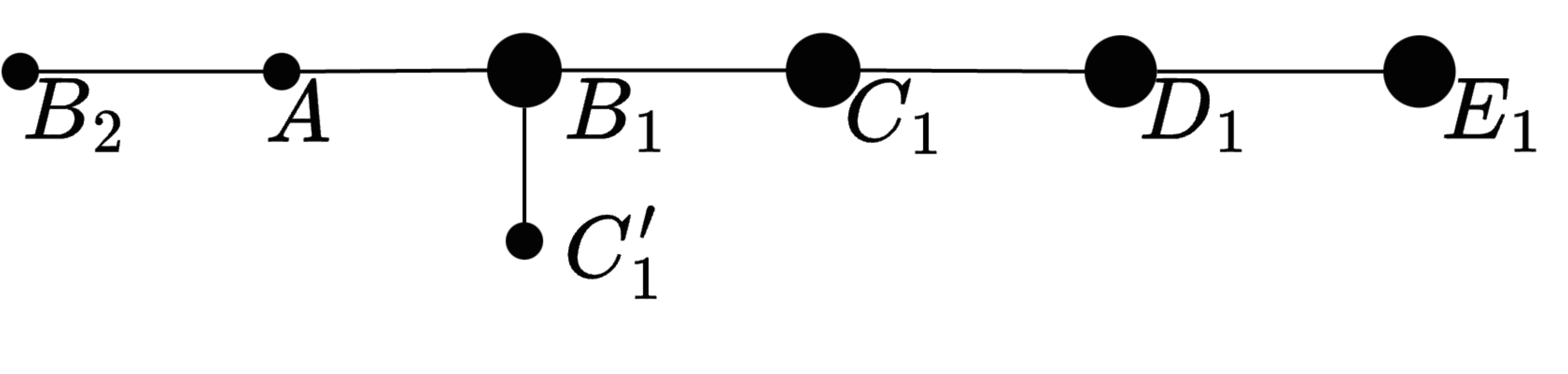}
    \caption{Dual graph: $(-K_S)^2=3$ and $\delta_P(S)=\frac{36}{25}$}
\end{figure}
\par Then  $\tau(A)=\frac{3}{2}$ and the Zariski Decomposition of the divisor $-K_S-vA$ is given by:
{\allowdisplaybreaks\begin{align*}
\hspace*{-0.5cm}&&P(v)=\begin{cases}-K_S-vA-\frac{v}{5}(4B_1+3C_1+2D_1+E_1)\text{ if }v\in[0,1],\\
-K_S-vA-\frac{v}{5}(4B_1+3C_1+2D_1+E_1)-(v-1)B_2\text{ if }v\in\big[1,\frac{5}{4}\big]
\\
-K_S-vA-(v-1)(4B_1+3C_1+2D_1+E_1)-(v-1)B_2-(4v - 5)C_1'\text{ if }v\in\big[\frac{5}{4},\frac{3}{2}\big].
\end{cases}\\
\hspace*{-0.5cm}&&N(v)=\begin{cases}\frac{v}{5}(4B_1+3C_1+2D_1+E_1)\text{ if }v\in[0,1],\\
\frac{v}{5}(4B_1+3C_1+2D_1+E_1)+(v-1)B_2\text{ if }v\in\big[1,\frac{5}{4}\big],\\
(v-1)(4B_1+3C_1+2D_1+E_1)+(v-1)B_2+(4v - 5)C_1'\text{ if }v\in\big[\frac{5}{4},\frac{3}{2}\big].
\end{cases}
\end{align*}}
Moreover, 
$$(P(v))^2=\begin{cases}
3-2v-\frac{v^2}{5}\text{ if }v\in[0,1],\\
\frac{4v^2}{5}-4v+4\text{ if }v\in\big[1,\frac{5}{4}\big],
\\
(3-2v)^2\text{ if }v\in\big[\frac{5}{4},\frac{3}{2}\big].
\end{cases}P(v)\cdot A=\begin{cases}
1+\frac{v}{5}\text{ if }v\in[0,1],\\
2(1-\frac{2v}{5})\text{ if }v\in\big[1,\frac{5}{4}\big],
\\
2(3-2v)\text{ if }v\in\big[\frac{5}{4},\frac{3}{2}\big].
\end{cases}$$
In this case $\delta_P(S)=\frac{36}{25}\text{ if }P\in A\backslash B_1$.
\end{lemma}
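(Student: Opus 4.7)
The plan is to reproduce, for this dual graph, the three-step argument used in Lemmas~\ref{deg3-nearA2point} and~\ref{deg3-nearA3points}: establish the piecewise Zariski decomposition of $-K_S-vA$, compute $S_S(A)$ by integrating $(P(v))^2$, and bound $S(W^A_{\bullet,\bullet};P)$ via inequality~\eqref{estimation1}. I would begin by exhibiting the effective $\mathbb R$-linear equivalence
\[
-K_S-vA\sim_{\mathbb R}\Big(\tfrac{3}{2}-v\Big)A+\tfrac{1}{2}\bigl(4B_1+3C_1+2D_1+E_1+B_2+2C_1'\bigr),
\]
which forces $\tau(A)=\tfrac{3}{2}$. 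The interval $[0,\tfrac{3}{2}]$ splits at $v=1$ and $v=\tfrac{5}{4}$, the values at which the intersection of the naive positive part with $B_2$, and then with $C_1'$, first turns negative. On each of the three sub-intervals I verify the stated $P(v)$ and $N(v)$ by checking $N(v)\ge 0$, $P(v)\cdot E=0$ for every component $E\subseteq\Supp N(v)$, and nefness of $P(v)$; given the dual graph, these are all finite intersection-number checks. The closed-form expressions for $(P(v))^2$ and $P(v)\cdot A$ then follow by direct expansion using $A^2=-1$, $B_i^2=C_j^2=D_k^2=E_\ell^2=-2$, and the incidences read off the graph.

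With these in place, step two is immediate:
\[
S_S(A)=\frac{1}{3}\int_0^{3/2}\bigl(P(v)\bigr)^2\,dv=\frac{25}{36},
\]
so $\delta_P(S)\le\tfrac{36}{25}$ for every $P\in A$. For the matching lower bound, inequality~\eqref{estimation1} reduces the task to showing $S(W^A_{\bullet,\bullet};P)\le\tfrac{25}{36}$ whenever $P\in A\setminus B_1$. On the first two sub-intervals the only component of $N(v)$ meeting $A$ is $B_1$, which avoids $P$, so $(N(v)\cdot A)_P=0$ and $h(v)=\tfrac{1}{2}(P(v)\cdot A)^2$. On the third sub-interval the new curve $C_1'$ enters $N(v)$; if $P\in C_1'\cap A$ one has $(N(v)\cdot A)_P=4v-5$, contributing $2(3-2v)(4v-5)$ to $h(v)$, and otherwise the defect term vanishes. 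Taking the pointwise maximum yields the uniform upper bound on $h(v)$ stated in the lemma, and direct integration gives $S(W^A_{\bullet,\bullet};P)\le\tfrac{25}{36}$, so $\delta_P(S)=\tfrac{36}{25}$.

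The main obstacle is the bookkeeping around the third sub-interval $[\tfrac{5}{4},\tfrac{3}{2}]$: one must correctly identify the curve $C_1'$ from the dual graph, determine its incidences with $A$, with $B_2$, and with the $\DA_4$-chain, and confirm that after subtracting $(v-1)(4B_1+3C_1+2D_1+E_1)+(v-1)B_2+(4v-5)C_1'$ the residual class remains nef on the whole sub-interval. Once this verification is settled, all remaining steps are routine linear algebra in $\mathrm{Num}(S)$.
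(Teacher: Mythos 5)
Your overall strategy (exhibit the $\mathbb R$-divisor witnessing $\tau(A)=\tfrac32$, verify the three-piece Zariski decomposition, compute $S_S(A)=\tfrac{25}{36}$, then bound $S(W^A_{\bullet,\bullet};P)$ via \eqref{estimation1}) is exactly the paper's, and your $S_S(A)$ is correct. However, your estimation of $h(v)$ rests on a misreading of the dual graph, and the bound you claim is false for some of the points you must cover. From the lemma's own formula $P(v)\cdot A=2\bigl(1-\tfrac{2v}{5}\bigr)$ on $\bigl[1,\tfrac54\bigr]$ one is forced to have $B_2\cdot A=1$ (otherwise subtracting $(v-1)B_2$ would not change $P(v)\cdot A$ from $1+\tfrac v5$), and from $P(v)\cdot A=2(3-2v)$ on $\bigl[\tfrac54,\tfrac32\bigr]$ one is forced to have $C_1'\cdot A=0$ ($C_1'$ is attached to $B_1$, which is why it enters $N(v)$ at $v=\tfrac54$). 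So your claim that on the first two sub-intervals the only component of $N(v)$ meeting $A$ is $B_1$ is wrong: for $P\in A\cap B_2$, which is a legitimate point of $A\setminus B_1$, the defect term $\bigl(N(v)\cdot A\bigr)_P=(v-1)$ is positive for $v>1$, and your proposed $h(v)=\tfrac12(P(v)\cdot A)^2$ undercounts it. Likewise your hypothetical contribution $2(3-2v)(4v-5)$ from $P\in C_1'\cap A$ concerns an empty set.

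The correct worst case is $P\in A\cap B_2$, for which
\[
h(v)=(v-1)\cdot P(v)\cdot A+\tfrac12\bigl(P(v)\cdot A\bigr)^2
=\frac{6v(5-2v)}{25}\ \text{on}\ \Bigl[1,\tfrac54\Bigr],\qquad
h(v)=2(3-2v)(2-v)\ \text{on}\ \Bigl[\tfrac54,\tfrac32\Bigr],
\]
which are exactly the bounds used in the paper. These still integrate to $S(W^A_{\bullet,\bullet};P)\le\tfrac{7}{12}<\tfrac{25}{36}$, so the conclusion $\delta_P(S)=\tfrac{36}{25}$ for $P\in A\setminus B_1$ survives; but as written your argument does not establish it at the points of $A\cap B_2$, and this is a gap you need to close by redoing the $h(v)$ estimate with the correct incidences $B_1\cdot A=B_2\cdot A=1$, $C_1'\cdot A=0$.
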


\begin{proof}
The Zariski Decomposition follows from $$-K_S-vA\sim_{\DR} \Big(\frac{3}{2}-v\Big)A+\frac{1}{2}\Big(4B_1+3C_1+2D_1+E_1+2C_1'+B_2\Big).$$ 
We have
$S_S(A)=\frac{25}{36}$.
Thus, $\delta_P(S)\le \frac{36}{25}$ for $P\in A$. Moreover, for $P\in A\backslash B_1$ we have:
$$h(v) \le\begin{cases}
\frac{ (v + 5)^2}{50} \text{ if }v\in[0,1],\\
\frac{6 v (5-2 v) }{25}\text{ if }v\in\big[1,\frac{5}{4}\big],\\
2 (3 - 2 v) (2 - v)\text{ if }v\in\big[\frac{5}{4},\frac{3}{2}\big].
\end{cases}$$
So  
$S(W_{\bullet,\bullet}^{A};P) \le \frac{7}{12}<\frac{25}{36}$.
Thus, $\delta_P(S)=\frac{36}{25}$ if $P\in  A\backslash B_1$.
\end{proof}
\begin{lemma}\label{deg3-107_nearA5points}
Suppose $P$ belongs to a $(-1)$-curve $A$ and there exist $(-1)$-curves and $(-2)$-curves   which form the following dual graph:
\begin{figure}[h!]
    \centering
\includegraphics[width=7cm]{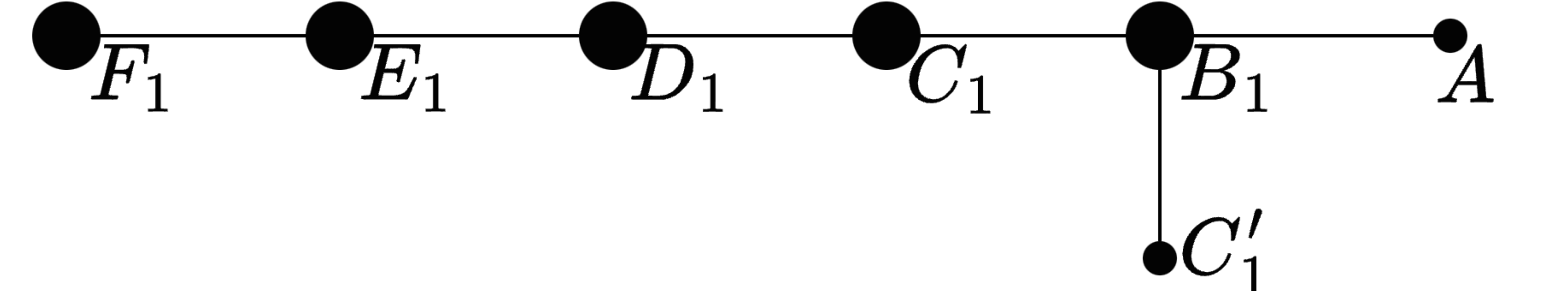}
    \caption{Dual graph: $(-K_S)^2=3$ and $\delta_P(S)=\frac{10}{7}$}
\end{figure}
\par Then  $\tau(A)=\frac{3}{2}$ and the Zariski Decomposition of the divisor $-K_S-vA$ is given by:
{   
{\allowdisplaybreaks\begin{align*}
&&P(v)=\begin{cases}
-K_S-vA-\frac{v}{6}(5B_1+4C_1+3D_1+2E_1+F_1)\text{ if }v\in\big[0,\frac{6}{5}\big],\\
-K_S-vA-(v-1)(5B_1+4C_1+3D_1+2E_1+F_1)-(5v - 6)C_1'\text{ if }v\in\big[\frac{6}{5},\frac{3}{2}\big],
\end{cases}\\
&&N(v)=\begin{cases}\frac{v}{6}(5B_1+4C_1+3D_1+2E_1+F_1)\text{ if }v\in[0,\frac{6}{5}\big],\\
(v-1)(5B_1+4C_1+3D_1+2E_1+F_1)+(5v - 6)C_1'\text{ if }v\in\big[\frac{6}{5},\frac{3}{2}\big].
\end{cases}
\end{align*}}
}
Moreover, 
$$(P(v))^2=\begin{cases}
3-2v-\frac{v^2}{6}\text{ if }v\in\big[0,\frac{6}{5}\big],\\
(3-2v)^2\text{ if }v\in\big[\frac{6}{5},\frac{3}{2}\big].
\end{cases}P(v)\cdot A=\begin{cases}1+\frac{v}{6}\text{ if }v\in\big[0,\frac{6}{5}\big],\\
2(3-2v)\text{ if }v\in\big[\frac{6}{5},\frac{3}{2}\big].
\end{cases}$$
In this case $\delta_P(S)=\frac{10}{7}\text{ if }P\in A\backslash B_1$.
\end{lemma}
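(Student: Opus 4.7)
The plan is to follow the template established in Lemmas \ref{deg3-nearA3points}--\ref{deg3-3625nearA4points}. I would first establish $\tau(A) = \frac{3}{2}$ by exhibiting the $\mathbb{R}$-linear equivalence
$$-K_S - vA \sim_{\mathbb{R}} \Big(\frac{3}{2}-v\Big)A + \frac{1}{2}\Big(5B_1 + 4C_1 + 3D_1 + 2E_1 + F_1 + 2C_1'\Big),$$
which simultaneously identifies which curves must appear in the negative part as $v$ grows. To obtain the Zariski decomposition I would split $[0, \frac{3}{2}]$ at the breakpoint $v = \frac{6}{5}$, which is the value at which $P(v) \cdot C_1'$ first drops to zero and $C_1'$ is forced to enter $N(v)$. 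On each of the two subintervals I would verify nefness of the proposed $P(v)$ by testing against every curve in the dual graph, and confirm that $N(v)$ is effective with negative-definite support; the stated formulas for $(P(v))^2$ and $P(v) \cdot A$ then follow from direct intersection computations.

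With the Zariski decomposition in hand, $S_S(A) = \frac{1}{(-K_S)^2}\int_0^{3/2}(P(v))^2 \, dv$ is a routine piecewise integration yielding $S_S(A) = \frac{7}{10}$, and hence $\delta_P(S) \leq \frac{10}{7}$ for every $P \in A$.

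For the matching lower bound at $P \in A \setminus B_1$, I would read off the dual graph that none of $C_1, D_1, E_1, F_1, C_1'$ passes through $P$ (and $B_1$ is excluded by assumption), and therefore $(N(v) \cdot A)_P = 0$ for all $v \in [0, \frac{3}{2}]$. Consequently $h(v) = \frac{1}{2}(P(v) \cdot A)^2$, which is the explicit piecewise polynomial $\frac{1}{2}(1 + v/6)^2$ on $[0, \frac{6}{5}]$ and $2(3-2v)^2$ on $[\frac{6}{5}, \frac{3}{2}]$. Direct integration gives $S(W^A_{\bullet,\bullet}; P) = \frac{8}{15} < \frac{7}{10}$, and the inequality \eqref{estimation1} then yields $\delta_P(S) \geq \min\{\frac{10}{7}, \frac{15}{8}\} = \frac{10}{7}$, matching the upper bound.

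The only genuine obstacle is the bookkeeping at the transition $v = \frac{6}{5}$: one must be confident that $C_1'$ is the only new curve that needs to enter the negative part beyond this breakpoint, which in turn depends on correctly reading the dual graph and verifying nefness of $P(v)$ against each remaining curve on $[\frac{6}{5}, \frac{3}{2}]$. Once that verification is complete, the rest of the argument is a carbon copy of the preceding lemmas.
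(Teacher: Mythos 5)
Your proposal follows exactly the paper's route: exhibit an $\mathbb{R}$-linear equivalence to pin down $\tau(A)=\tfrac32$ and the support of $N(v)$, compute $S_S(A)=\tfrac{7}{10}$ by piecewise integration, and for $P\in A\setminus B_1$ use $h(v)=\tfrac12(P(v)\cdot A)^2$ to get $S(W^A_{\bullet,\bullet};P)=\tfrac{8}{15}<\tfrac{7}{10}$, whence $\delta_P(S)=\tfrac{10}{7}$ via \eqref{estimation1}. All of these numbers agree with the paper.

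The one flaw is the displayed linear equivalence itself. With the configuration forced by the stated Zariski decomposition ($A$ and $C_1'$ are disjoint $(-1)$-curves both meeting the end $B_1$ of the chain $B_1-C_1-D_1-E_1-F_1$), the correct identity is
$$-K_S-vA\sim_{\mathbb{R}}\Big(\tfrac32-v\Big)A+\tfrac12\Big(5B_1+4C_1+3D_1+2E_1+F_1+3C_1'\Big),$$
i.e.\ the coefficient of $C_1'$ is $\tfrac32$, not $1$: your class $3A+5B_1+4C_1+3D_1+2E_1+F_1+2C_1'$ has intersection $-1$ with $B_1$ and $3$ with $C_1'$, so it is not $-2K_S$. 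This is a local arithmetic slip rather than a gap in method — the breakpoint $v=\tfrac65$, the nefness checks, and every subsequent integral are unaffected — but since the whole justification of $\tau(A)$ and of $\mathrm{Supp}\,N(v)$ rests on that identity, it should be corrected. (For what it is worth, the relation printed in the paper's own proof, $\tfrac12(6B_1+5C_1+4D_1+3E_1+2F_1+4C_1')$, also fails these intersection tests, so the verification above is worth recording either way.)
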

\begin{proof}
The Zariski Decomposition follows from $$-K_S-vA\sim_{\DR} \Big(\frac{3}{2}-v\Big)A+\frac{1}{2}\Big(6B_1+5C_1+4D_1+3E_1+2F_1+4C_1'\Big).$$
We have
$S_S(A)=\frac{7}{10}$.
Thus, $\delta_P(S)\le \frac{10}{7}$ for $P\in A$. Moreover, for $P\in A\backslash B_1$ we have:
$$h(v) \le\begin{cases}
\frac{(6+v)^2}{72}\text{ if }v\in\big[0,\frac{6}{5}\big],\\
2(3-2v)^2\text{ if }v\in\big[\frac{6}{5},\frac{3}{2}\big].
\end{cases}$$
So  
$S(W_{\bullet,\bullet}^{A};P) \le \frac{8}{15}<\frac{7}{10}$.
Thus, $\delta_P(S)=\frac{10}{7}$ if $P\in A\backslash B_1$.
\end{proof}
\begin{lemma}\label{deg3-2719nearA4points}
Suppose $P$ belongs to a $(-1)$-curve $A$ and there exist $(-1)$-curves and $(-2)$-curves   which form the following dual graph:
\begin{figure}[h!]
    \centering
\includegraphics[width=14cm]{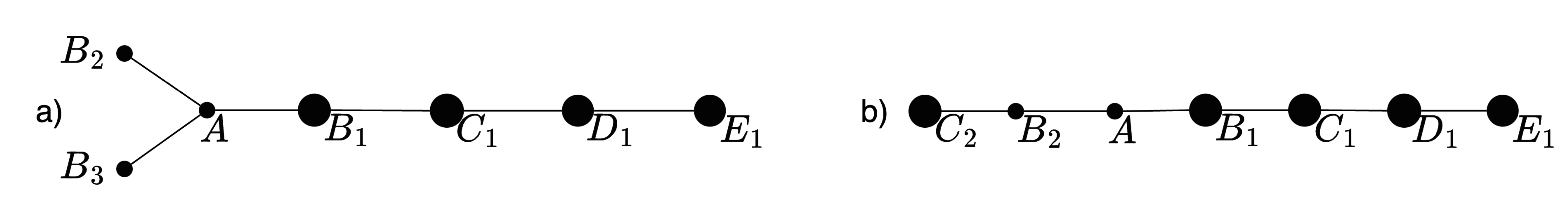}
    \caption{Dual graph: $(-K_S)^2=3$ and $\delta_P(S)=\frac{27}{19}$}
\end{figure}
\par Then  $\tau(A)=\frac{5}{3}$ and the Zariski Decomposition of the divisor $-K_S-vA$ is given by:
{   
{\allowdisplaybreaks\begin{align*}
&{\text{\bf a). }}&
P(v)=\begin{cases}
-K_S-vA-\frac{v}{5}(4B_1+3C_1+2D_1+E_1)\text{ if }v\in[0,1],\\
-K_S-vA-\frac{v}{5}(4B_1+3C_1+2D_1+E_1)-(v-1)(B_2+B_3)\text{ if }v\in\big[1,\frac{5}{3}\big].
\end{cases}\\&&N(v)=\begin{cases}\frac{v}{5}(4B_1+3C_1+2D_1+E_1)\text{ if }v\in[0,1],\\
\frac{v}{5}(4B_1+3C_1+2D_1+E_1)+(v-1)(B_2+B_3)\text{ if }v\in\big[1,\frac{5}{3}\big].
\end{cases}\\
&{\text{\bf b). }}&P(v)=\begin{cases}
-K_S-vA-\frac{v}{5}(4B_1+3C_1+2D_1+E_1)\text{ if }v\in[0,1],\\
-K_S-vA-\frac{v}{5}(4B_1+3C_1+2D_1+E_1)-(v-1)(2B_2+C_2)\text{ if }v\in\big[1,\frac{5}{3}\big].
\end{cases}\\&&N(v)=\begin{cases}\frac{v}{5}(4B_1+3C_1+2D_1+E_1)\text{ if }v\in[0,1],\\
\frac{v}{5}(4B_1+3C_1+2D_1+E_1)+(v-1)(2B_2+C_2)\text{ if }v\in\big[1,\frac{5}{3}\big].
\end{cases}
\end{align*}}
}
Moreover, 
$$(P(v))^2=\begin{cases}
3-2v-\frac{v^2}{5}\text{ if }v\in[0,1],\\
\frac{(5-3v)^2}{5}\text{ if }v\in\big[1,\frac{5}{3}\big].
\end{cases}P(v)\cdot A=\begin{cases}1+\frac{v}{5}\text{ if }v\in[0,1],\\
3-\frac{9v}{5}\text{ if }v\in\big[1,\frac{5}{3}\big].
\end{cases}$$
In this case $\delta_P(S)=\frac{27}{19}\text{ if }P\in A\backslash B_1$.
\end{lemma}
\begin{proof}
The Zariski Decomposition in part a). follows from $$-K_S-vA\sim_{\DR} \Big(\frac{5}{3}-v\Big)A+\frac{1}{3}\Big(4B_1+3C_1+2D_1+E_1+2B_2+2B_3\Big).$$
A similar statement holds in other parts.  We have
$S_S(A)=\frac{19}{27}$. Thus, $\delta_P(S)\le \frac{27}{19}$ for $P\in A$. Moreover, for $P\in A\backslash B_1$ we have:
$$h(v) \le\begin{cases}
\frac{(v+5)^2}{50}\text{ if }v\in[0,1],\\
\frac{3 (5 - 3 v) (11 v - 5)}{50}\text{ if }v\in\big[1,\frac{5}{3}\big].
\end{cases}$$
So  
$S(W_{\bullet,\bullet}^{A};P) \le \frac{17}{27}<\frac{19}{27}$.
Thus, $\delta_P(S)=\frac{27}{19}$ if $P\in A\backslash B_1$.
\end{proof}
\begin{lemma}\label{deg3-near2A1points}
Suppose $P$ belongs to a $(-1)$-curve $A$ and there exist $(-1)$-curves and $(-2)$-curves   which form the following dual graph:
\begin{figure}[h!]
    \centering
 \includegraphics[width=11cm]{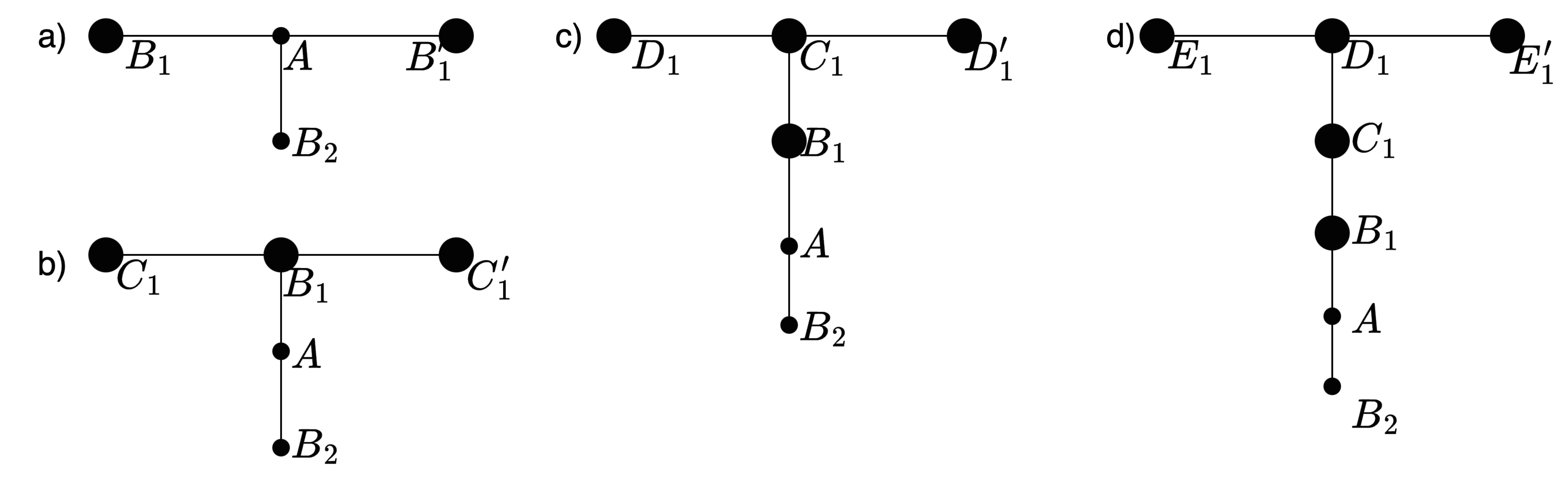}
    \caption{Dual graph: $(-K_S)^2=3$ and $\delta_P(S)=\frac{9}{7}$}
\end{figure}
\par Then $\tau(A)=2$ and the Zariski Decomposition of the divisor $-K_S-vA$ is given by:
{
{\allowdisplaybreaks\begin{align*}
&{\text{\bf a). }} &
P(v)=\begin{cases}
-K_S-vA-\frac{v}{2}(B_1+B_1')\text{ if }v\in[0,1],\\
-K_S-vA-\frac{v}{2}(B_1+B_1')-(v-1)B_2\text{ if }v\in[1,2].
\end{cases}
\text{}\\&&
N(v)=\begin{cases}\frac{v}{2}(B_1+B_1')\text{ if }v\in[0,1],\\
\frac{v}{2}(B_1+B_1')+(v-1)B_2\text{ if }v\in[1,2].
\end{cases}\\
&{\text{\bf b). }} &
P(v)=\begin{cases}
-K_S-vA-\frac{v}{2}(2B_1+C_1+C_1')\text{ if }v\in[0,1],\\
-K_S-vA-\frac{v}{2}(2B_1+C_1+C_1')-(v-1)B_2\text{ if }v\in[1,2].
\end{cases}
\text{}\\&&
N(v)=\begin{cases}\frac{v}{2}(2B_1+C_1+C_1')\text{ if }v\in[0,1],\\
\frac{v}{2}(2B_1+C_1+C_1')+(v-1)B_2\text{ if }v\in[1,2].
\end{cases}\\
&{\text{\bf c). }} &
P(v)=\begin{cases}
-K_S-vA-\frac{v}{2}(2B_1+2C_1+D_1+D_1')\text{ if }v\in[0,1],\\
-K_S-vA-\frac{v}{2}(2B_1+2C_1+D_1+D_1')-(v-1)B_2\text{ if }v\in[1,2].
\end{cases}
\text{}\\&&
N(v)=\begin{cases}\frac{v}{2}(2B_1+2C_1+D_1+D_1')\text{ if }v\in[0,1],\\
\frac{v}{2}(2B_1+2C_1+D_1+D_1')+(v-1)B_2\text{ if }v\in[1,2].
\end{cases}\\
&{\text{\bf d). }} &
P(v)=\begin{cases}
-K_S-vA-\frac{v}{2}(2B_1+2C_1+2D_1+E_1+E_1')\text{ if }v\in[0,1],\\
-K_S-vA-\frac{v}{2}(2B_1+2C_1+2D_1+E_1+E_1')-(v-1)B_2\text{ if }v\in[1,2].
\end{cases}
\\&&
N(v)=\begin{cases}\frac{v}{2}(2B_1+2C_1+2D_1+E_1+E_1')\text{ if }v\in[0,1],\\
\frac{v}{2}(2B_1+2C_1+2D_1+E_1+E_1')+(v-1)B_2\text{ if }v\in[1,2].
\end{cases}
\end{align*}}}
Moreover, 
$$(P(v))^2=\begin{cases}
3-2v\text{ if }v\in[0,1],\\
(2-v)^2\text{ if }v\in[1,2].
\end{cases}P(v)\cdot A=\begin{cases}1\text{ if }v\in[0,1],\\
2-v\text{ if }v\in[1,2].
\end{cases}$$
In this case $\delta_P(S)=\frac{9}{7}$ if $P\in A\backslash (B_1\cup B_1')$.
\end{lemma}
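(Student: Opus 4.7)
The plan is to follow the template established by the preceding lemmas in this section: identify $\tau(A)$, derive the Zariski decomposition of $-K_S - vA$, extract $S_S(A)$, and then apply \eqref{estimation1} to obtain the matching lower bound on $\delta_P(S)$.

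In each of the four cases a)--d), the starting point is the effective identity $-K_S \sim_{\DR} 2A + E$, where $E$ is the effective combination of negative curves visible at $v = 2$: namely, $E = B_1 + B_1' + B_2$ in case a), and correspondingly heavier combinations in cases b)--d) (with multiplicities dictated by the inverse Cartan matrix of the $A_n$-chains passing through $B_1$ and $B_1'$). This gives $\tau(A) = 2$ at once. To confirm the stated positive and negative parts on the two subintervals $[0,1]$ and $[1,2]$, I would verify that $P(v)$ is nef by checking $P(v) \cdot C \ge 0$ on each irreducible component $C$ appearing in the dual graph, and that the support of $N(v)$ has negative-definite intersection matrix. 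The break at $v = 1$ is forced by $P(v) \cdot B_2 = 1 - v$ becoming negative.

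With the decomposition in place, $(P(v))^2$ and $P(v) \cdot A$ are immediate by intersection theory, yielding
$$S_S(A) = \frac{1}{3}\left(\int_0^1 (3 - 2v)\,dv + \int_1^2 (2-v)^2\,dv\right) = \frac{7}{9},$$
and hence the upper bound $\delta_P(S) \le A_S(A)/S_S(A) = 9/7$ for every $P \in A$. For the matching lower bound at $P \in A \setminus (B_1 \cup B_1')$, the components $B_1$ and $B_1'$ of $N(v)$ contribute zero to $(N(v)\cdot A)_P$, leaving at most $v-1$ from the component $B_2$ on $[1,2]$. This gives
$$h(v) \le \begin{cases} \tfrac{1}{2}, & v \in [0,1], \\ (2-v)(v-1) + \tfrac{(2-v)^2}{2}, & v \in [1,2], \end{cases}$$
which integrates to $S(W^A_{\bullet,\bullet}; P) \le \tfrac{5}{9} < \tfrac{7}{9}$, so \eqref{estimation1} yields $\delta_P(S) \ge 9/7$, matching the upper bound.

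The main obstacle is the case-by-case nefness verification in the four dual graphs, which differ in the $A_n$-chains attached via $B_1$ and $B_1'$. However, since the coefficients in $N(v)$ are exactly those prescribed by the inverse Cartan matrix of the corresponding Dynkin diagrams, $P(v) \cdot C = 0$ holds on every component $C$ of each fully-extracted chain, and the check therefore reduces to the same standard Dynkin computation already performed implicitly in Lemmas \ref{deg3-nearA2point}--\ref{deg3-2719nearA4points}.
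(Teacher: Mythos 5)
Your proposal is correct and follows essentially the same route as the paper: the same effective identity $-K_S-vA\sim_{\DR}(2-v)A+B_1+B_1'+B_2$ (and its analogues in b)--d)) giving $\tau(A)=2$, the same value $S_S(A)=\frac{7}{9}$, and an $h(v)$ bound that is algebraically identical to the paper's $\frac{(2-v)v}{2}$ on $[1,2]$, leading to $S(W^A_{\bullet,\bullet};P)\le\frac{5}{9}$ and $\delta_P(S)=\frac{9}{7}$. The only difference is that you spell out the nefness and negative-definiteness checks that the paper leaves implicit.
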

\begin{proof}
The Zariski Decomposition in part a). follows from $-K_S-vA\sim_{\DR} (2-v)A+B_1+B_1'+B_2$. A similar statement holds in other parts.
We have
$S_S(A)=\frac{7}{9}$.
Thus, $\delta_P(S)\le \frac{9}{7}$ for $P\in A$. Note that for $P$ as above we have:
$$h(v)\le \begin{cases}
\frac{1}{2}\text{ if }v\in[0,1],\\
\frac{(2 - v) v}{2}\text{ if }v\in[1,2].
\end{cases}$$
So 
$S(W_{\bullet,\bullet}^{E_1};P)\le\frac{5}{9}\le\frac{7}{9}$. Thus, $\delta_P(S)=\frac{9}{7}$.
\end{proof}
\begin{lemma}\label{deg3-A1points}
Suppose $P$ belongs to a $(-2)$-curve $A$ and there exist $(-1)$-curves and $(-2)$-curves   which form the following dual graph:
\begin{figure}[h!]
    \centering
\hspace*{-0.5cm} \includegraphics[width=16cm]{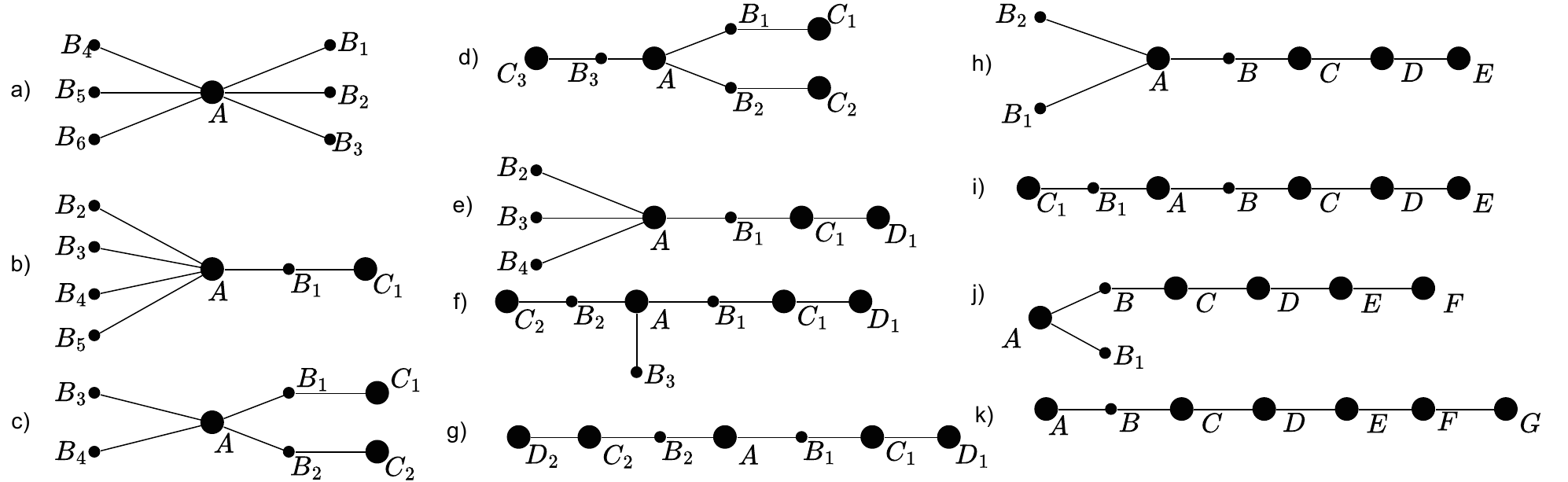}
    \caption{Dual graph: $(-K_S)^2=3$ and $\delta_P(S)=\frac{6}{5}$ with $\tau(A)=\frac{3}{2}$}
\end{figure}
\par Then $\tau(A)=\frac{3}{2}$ and the Zariski Decomposition of the divisor $-K_S-vA$ is given by:{
{\allowdisplaybreaks\begin{align*}
&{\text{\bf a). }}& P(v)=\begin{cases}-K_S-vA\text{ if }v\in[0,1],\\
-K_S-vA-(v-1)(B_1+B_2+B_3+B_4+B_5+B_6)\text{ if }v\in\big[1,\frac{3}{2}\big].
\end{cases}\\&&N(v)=\begin{cases}0\text{ if }v\in[0,1],\\
(v-1)(B_1+B_2+B_3+B_4+B_5+B_6)\text{ if }v\in\big[1,\frac{3}{2}\big].
\end{cases}\\
&{\text{\bf b). }} & P(v)=\begin{cases}-K_S-vA\text{ if }v\in[0,1],\\
-K_S-vA-(v-1)(2B_1+C_1+B_2+B_3+B_4+B_5)\text{ if }v\in\big[1,\frac{3}{2}\big].
\end{cases}\\&&N(v)=\begin{cases}0\text{ if }v\in[0,1],\\
(v-1)(2B_1+C_1+B_2+B_3+B_4+B_5)\text{ if }v\in\big[1,\frac{3}{2}\big].
\end{cases}\\
&{\text{\bf c). }} & P(v)=\begin{cases}-K_S-vA\text{ if }v\in[0,1],\\
-K_S-vA-(v-1)(2B_1+C_1+2B_2+C_2+B_3+B_4)\text{ if }v\in\big[1,\frac{3}{2}\big].
\end{cases}\\&& N(v)=\begin{cases}0\text{ if }v\in[0,1],\\
(v-1)(2B_1+C_1+2B_2+C_2+B_3+B_4)\text{ if }v\in\big[1,\frac{3}{2}\big].
\end{cases}\\
&{\text{\bf d). }} & P(v)=\begin{cases}-K_S-vA\text{ if }v\in[0,1],\\
-K_S-vA-(v-1)(2B_1+C_1+2B_2+C_2+2B_3+C_3)\text{ if }v\in\big[1,\frac{3}{2}\big].
\end{cases}\\&& N(v)=\begin{cases}0\text{ if }v\in[0,1],\\
(v-1)(2B_1+C_1+2B_2+C_2+2B_3+C_3)\text{ if }v\in\big[1,\frac{3}{2}\big].
\end{cases}\\
&{\text{\bf e). }} & P(v)=\begin{cases}-K_S-vA\text{ if }v\in[0,1],\\
-K_S-vA-(v-1)(3B_1+2C_1+D_1+B_2+B_3+B_4)\text{ if }v\in\big[1,\frac{3}{2}\big].
\end{cases}\\&& N(v)=\begin{cases}0\text{ if }v\in[0,1],\\
(v-1)(3B_1+2C_1+D_1+B_2+B_3+B_4)\text{ if }v\in\big[1,\frac{3}{2}\big].
\end{cases}\\
&{\text{\bf f). }} & P(v)=\begin{cases}-K_S-vA\text{ if }v\in[0,1],\\
-K_S-vA-(v-1)(3B_1+2C_1+D_1+2B_2+C_2+B_3)\text{ if }v\in\big[1,\frac{3}{2}\big].
\end{cases}\\&& N(v)=\begin{cases}0\text{ if }v\in[0,1],\\
(v-1)(3B_1+2C_1+D_1+2B_2+C_2+B_3)\text{ if }v\in\big[1,\frac{3}{2}\big].
\end{cases}\\
&{\text{\bf g). }} & P(v)=\begin{cases}-K_S-vA\text{ if }v\in[0,1],\\
-K_S-vA-(v-1)(3B_1+2C_1+D_1+3B_2+2C_2+D_2)\text{ if }v\in\big[1,\frac{3}{2}\big].
\end{cases}\\&& N(v)=\begin{cases}0\text{ if }v\in[0,1],\\
(v-1)(3B_1+2C_1+D_1+3B_2+2C_2+D_2)\text{ if }v\in\big[1,\frac{3}{2}\big].
\end{cases}\\
&{\text{\bf h). }} & P(v)=\begin{cases}-K_S-vA\text{ if }v\in[0,1],\\
-K_S-vA-(v-1)(4B+3C+2D+E+B_1+B_2)\text{ if }v\in\big[1,\frac{3}{2}\big].
\end{cases}\\&& N(v)=\begin{cases}0\text{ if }v\in[0,1],\\
(v-1)(4B+3C+2D+E+B_1+B_2)\text{ if }v\in\big[1,\frac{3}{2}\big].
\end{cases}\\
&{\text{\bf i). }} & P(v)=\begin{cases}-K_S-vA\text{ if }v\in[0,1],\\
-K_S-vA-(v-1)(4B+3C+2D+E+2B_1+C_1)\text{ if }v\in\big[1,\frac{3}{2}\big].
\end{cases}\\&& N(v)=\begin{cases}0\text{ if }v\in[0,1],\\
(v-1)(4B+3C+2D+E+2B_1+C_1)\text{ if }v\in\big[1,\frac{3}{2}\big].
\end{cases}\\
&{\text{\bf j). }} & P(v)=\begin{cases}-K_S-vA\text{ if }v\in[0,1],\\
-K_S-vA-(v-1)(5B+4C+3D+2E+F+B_1)\text{ if }v\in\big[1,\frac{3}{2}\big].
\end{cases}\\&& N(v)=\begin{cases}0\text{ if }v\in[0,1],\\
(v-1)(5B+4C+3D+2E+F+B_1)\text{ if }v\in\big[1,\frac{3}{2}\big].
\end{cases}\\
&{\text{\bf k). }} & P(v)=\begin{cases}-K_S-vA\text{ if }v\in[0,1],\\
-K_S-vA-(v-1)(6B+5C+4D+3E+2F+G)\text{ if }v\in\big[1,\frac{3}{2}\big].
\end{cases}\\&& N(v)=\begin{cases}0\text{ if }v\in[0,1],\\
(v-1)(6B+5C+4D+3E+2F+G)\text{ if }v\in\big[1,\frac{3}{2}\big].
\end{cases}
\end{align*}}}

Moreover:
$$(P(v))^2=\begin{cases}
3-2v^2\text{ if }v\in[0,1],\\
(3-2v)^2\text{ if }v\in\big[1,\frac{3}{2}\big].
\end{cases}P(v)\cdot A=\begin{cases}
2v\text{ if }v\in[0,1],\\
2(3-2v)\text{ if }v\in\big[1,\frac{3}{2}\big].
\end{cases}$$
In this case $\delta_P(S)=\frac{6}{5}$ if $P\in A\backslash B$.
\end{lemma}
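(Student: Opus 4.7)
The plan is to verify the lemma by following the three standard steps: (i) check the claimed Zariski decompositions, (ii) evaluate $S_S(A)$ and $S(W^A_{\bullet,\bullet};P)$, and (iii) apply the estimate \eqref{estimation1} together with the obvious upper bound $\delta_P(S)\le A_S(A)/S_S(A)$ to conclude.

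First, I would establish the Zariski decomposition uniformly in all eleven cases a)--k). On $[0,1]$, $-K_S-vA$ is nef: since $A$ is a $(-2)$-curve, $(-K_S-vA)\cdot A=2v\ge 0$, and a direct check against every other curve of the dual graph (using that each adjacent curve $B$ satisfies $B\cdot A\le 1$ and $-K_S\cdot B\ge 0$) shows the intersection remains non-negative on $[0,1]$. Hence $P(v)=-K_S-vA$ and $N(v)=0$ on this interval. For $v\in[1,\frac{3}{2}]$, the key observation is the $\mathbb{R}$-linear equivalence of the form
\[
-K_S-vA\sim_{\DR}\Big(\tfrac{3}{2}-v\Big)A+\tfrac{1}{2}\mathcal{N},
\]
where $\mathcal{N}$ is the specific effective combination of $(-1)$- and $(-2)$-curves listed in each part. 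In each case the coefficients of $\mathcal{N}$ follow the standard ``fundamental cycle'' pattern for the Du Val chain adjacent to $A$, and one verifies that $\mathcal{N}$ has negative definite intersection matrix and is orthogonal to $(\tfrac{3}{2}-v)A+\tfrac{1}{2}\mathcal{N}$ interpreted as the prospective positive part. This identifies $P(v)$ and $N(v)$ as claimed and forces $\tau(A)=\tfrac{3}{2}$.

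Next, the formulas for $(P(v))^2$ and $P(v)\cdot A$ follow immediately from the Zariski decomposition. On $[0,1]$ one has $(P(v))^2=K_S^2+v^2 A^2=3-2v^2$ and $P(v)\cdot A=-vA^2=2v$; on $[1,\tfrac{3}{2}]$ the claimed $(3-2v)^2$ and $2(3-2v)$ can be read off from the orthogonality of $P(v)$ to $N(v)$, and the values match at $v=1$ and vanish at $v=\tfrac{3}{2}$ as required. Integrating yields
\[
S_S(A)=\frac{1}{3}\Bigg(\int_0^1(3-2v^2)dv+\int_1^{3/2}(3-2v)^2dv\Bigg)=\frac{1}{3}\Bigg(\frac{7}{3}+\frac{1}{6}\Bigg)=\frac{5}{6},
\]
which gives the upper bound $\delta_P(S)\le \frac{A_S(A)}{S_S(A)}=\frac{6}{5}$ for every $P\in A$.

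For the matching lower bound, fix $P\in A\setminus B$, where $B$ denotes the unique curve of the dual graph meeting $A$ that lies in the support of $N(v)$ for $v$ slightly above $1$ (in each case a) through k) the components of $N(v)$ meeting $A$ are supported in $B$). Then $(N(v)\cdot A)_P=0$ for all $v$, so
\[
h(v)\le \frac{(P(v)\cdot A)^2}{2}=\begin{cases}2v^2&v\in[0,1],\\ 2(3-2v)^2&v\in\big[1,\tfrac{3}{2}\big],\end{cases}
\]
and a direct integration gives $S(W^A_{\bullet,\bullet};P)\le\frac{2}{3}\cdot(\frac{2}{3}+\frac{1}{3})=\frac{2}{3}\le \frac{5}{6}$. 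Applying \eqref{estimation1} yields $\delta_P(S)\ge \min\{\frac{1}{S_S(A)},\frac{1}{S(W^A_{\bullet,\bullet};P)}\}=\frac{6}{5}$, and combined with the upper bound we conclude $\delta_P(S)=\frac{6}{5}$.

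The only non-mechanical step is verifying the Zariski decomposition uniformly across the eleven configurations a)--k): one must check that in each case the prescribed $\mathcal{N}$ is indeed the negative part, i.e.\ that the candidate $P(v)$ is nef on the second interval and orthogonal to every component of $N(v)$. This reduces to intersection number bookkeeping on each dual graph, and since the coefficient pattern in $\mathcal{N}$ is the natural fundamental cycle of the adjacent Du Val configuration, the checks are routine but tedious; this is the main bookkeeping obstacle in the proof.
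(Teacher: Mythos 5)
Your proposal is correct and follows essentially the same route as the paper: the same $\mathbb{R}$-linear equivalence $-K_S-vA\sim_{\DR}(\tfrac32-v)A+\tfrac12\mathcal N$ to identify the Zariski decomposition, the same computation $S_S(A)=\tfrac56$, and the same application of the estimate \eqref{estimation1}, with only a cosmetically different bound on $h$ (you use $2(3-2v)^2$ on $[1,\tfrac32]$ for $P\in A\setminus B$, the paper uses $2v(3-2v)$; both give $S(W^A_{\bullet,\bullet};P)\le\tfrac56$). One small wording slip: in cases a)--g) the components of $N(v)$ meeting $A$ are not a \emph{unique} curve but several, so $B$ should be read as the union of all such components; your argument is unaffected since you only use $(N(v)\cdot A)_P=0$ off that union.
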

\begin{proof}
 The Zariski Decomposition in part a). follows from $$-K_S-vA\sim_{\DR} \Big(\frac{3}{2}-v\Big)A+\frac{1}{2}\Big(B_1+B_2+B_3+B_4+B_5+B_6\Big).$$ A similar statement holds in other parts. We have $S_S(A)=\frac{5}{6}$,
Thus, $\delta_P(S)\le \frac{6}{5}$ for $P\in A$. Note that for $P\in A$ we have:
$$h(v)\le \begin{cases}2v^2\text{ if }v\in[0,1],\\
2v(3-2v)\text{ if }v\in\big[1,\frac{3}{2}\big].
\end{cases}$$
So we have 
$S(W_{\bullet,\bullet}^{A};P)\le\frac{5}{6}$.
Thus, $\delta_P(S)=\frac{6}{5}$ if $P\in A\backslash B$.
\end{proof}
\begin{lemma}\label{deg3-nearA1A2points} \label{deg3-65-32-2-points}
Suppose $P$ belongs to a $(-1)$-curve $A$ and there exist $(-1)$-curves and $(-2)$-curves   which form the following dual graph:
\begin{figure}[h!]
    \centering
 \includegraphics[width=4cm]{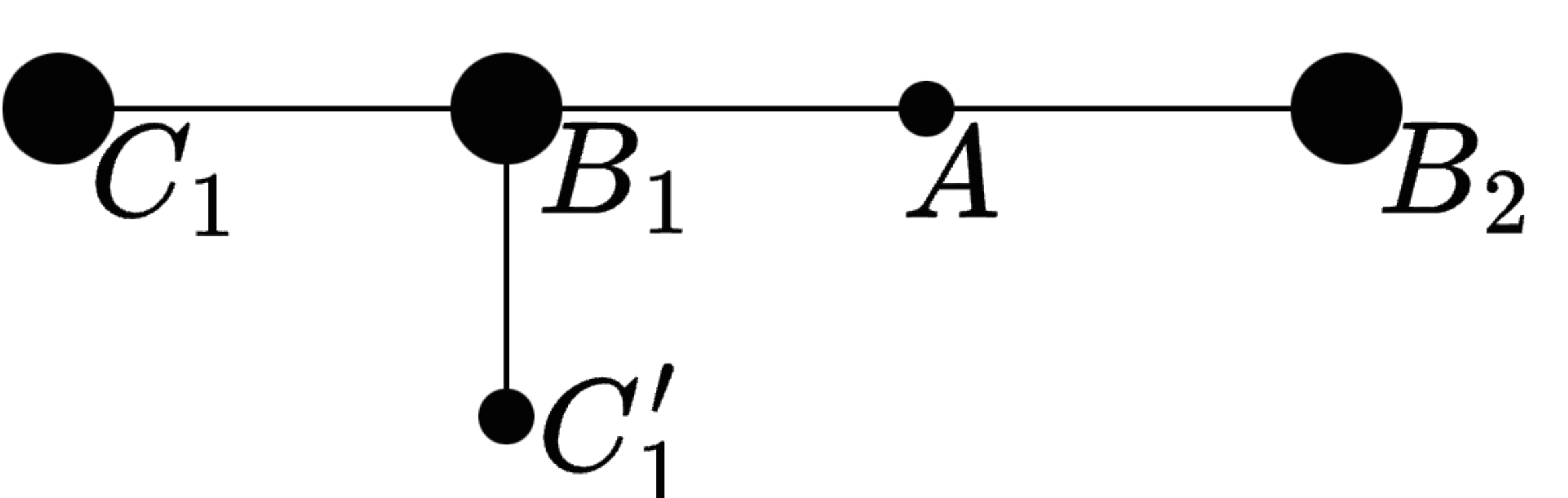}
    \caption{Dual graph: $(-K_S)^2=3$ and $\delta_P(S)=\frac{6}{5}$ with $\tau(A)=2$}
\end{figure}
\par Then  $\tau(A)=2$ and the Zariski Decomposition of the divisor $-K_S-vA$ is given by:{
{\allowdisplaybreaks\begin{align*}
&& P(v)=\begin{cases}-K_S-vA-\frac{v}{3}(2 B_1+C_1)-\frac{v}{2}B_2\text{ if }v\in\big[0,\frac{3}{2}\big],\\
-K_S-vA-(v-1)(2 B_1+C_1)-\frac{v}{2}B_2-(2v-3)C_1'\text{ if }v\in\big[\frac{3}{2},2\big].
\end{cases}\\&&N(v)=\begin{cases}
\frac{v}{3}(2  B_1+C_1)+\frac{v}{2}B_2\text{ if }v\in\big[0,\frac{3}{2}\big],\\
(v-1)(2  B_1+C_1)+\frac{v}{2}B_2+(2v-3)C_1'\text{ if }v\in\big[\frac{3}{2},2\big].
\end{cases}
\end{align*}}}
Moreover, 
$$(P(v))^2=\begin{cases}3-2v+\frac{v^2}{6}\text{ if }v\in\big[0,\frac{3}{2}\big],\\
\frac{3(2-v)^2}{2}\text{ if }v\in\big[\frac{3}{2},2\big].
\end{cases}P(v)\cdot A=\begin{cases}1-\frac{v}{6}\text{ if }v\in\big[0,\frac{3}{2}\big],\\
3-\frac{3v}{2}\text{ if }v\in\big[\frac{3}{2},2\big].
\end{cases}$$
In this case $\delta_P(S)=\frac{6}{5}$\text{ if }$P\in A\backslash (B_1\cup B_2)$.
\end{lemma}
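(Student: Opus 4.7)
The plan is to follow the same template as Lemmas~\ref{deg3-nearA2point} and~\ref{deg3-nearA3points}: first determine $\tau(A)$ and the Zariski decomposition of $-K_S-vA$, then read off $(P(v))^2$ and $P(v)\cdot A$, and finally combine $S_S(A)$ with the local integral $S(W^A_{\bullet,\bullet};P)$ via the estimate~(\ref{estimation1}).

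From the dual graph I read off $A\cdot B_1=A\cdot B_2=1$, $A\cdot C_1=0$, $B_1\cdot C_1=1$ and $B_1\cdot B_2=0$, with $B_1,C_1,B_2$ all $(-2)$-curves and $A$ a $(-1)$-curve. Orthogonality of $P(v)$ to the components of $N(v)$ uniquely forces the coefficients $\tfrac{v}{3}$ on the $A_2$-chain combination $2B_1+C_1$ and $\tfrac{v}{2}$ on $B_2$, yielding the stated $P(v)$ on $[0,\tfrac{3}{2}]$. The transition at $v=\tfrac{3}{2}$ is driven by an auxiliary $(-1)$-curve $C_1'$ satisfying $C_1'\cdot B_1=1$ and $C_1'\cdot A=C_1'\cdot C_1=C_1'\cdot B_2=0$: a direct computation shows that $P(v)\cdot C_1'<0$ for $v>\tfrac{3}{2}$, and that adding $(2v-3)C_1'$ to $N(v)$ together with the shift of the coefficient on $2B_1+C_1$ from $\tfrac{v}{3}$ to $v-1$ restores nefness on $[\tfrac{3}{2},2]$. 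The pseudo-effective threshold $\tau(A)=2$ then follows from the identity $-K_S\sim_{\DR} 2A+2B_1+C_1+B_2+C_1'$, equivalently from $P(2)=0$.

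With the decomposition in hand the piecewise formulas for $(P(v))^2$ and $P(v)\cdot A$ are routine intersection-number bookkeeping, and integrating $(P(v))^2$ over $[0,2]$ yields $S_S(A)=\tfrac{5}{6}$, hence $\delta_P(S)\le\tfrac{6}{5}$ for every $P\in A$. For the lower bound I apply~(\ref{estimation1}). The essential observation is that for $P\in A\setminus(B_1\cup B_2)$ the local intersection $(N(v)\cdot A)_P$ vanishes: the only components of $N(v)$ meeting $A$ are $B_1$ and $B_2$ (since $A\cdot C_1=A\cdot C_1'=0$), and both of the intersection points $A\cap B_1$ and $A\cap B_2$ are excluded by hypothesis. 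Therefore $h(v)=\tfrac{1}{2}(P(v)\cdot A)^2$, the stated upper bounds on $h(v)$ follow by direct substitution, and integration gives $S(W^A_{\bullet,\bullet};P)=\tfrac{5}{12}\le\tfrac{5}{6}$. Then~(\ref{estimation1}) yields $\delta_P(S)\ge\tfrac{6}{5}$, and combined with the upper bound we conclude $\delta_P(S)=\tfrac{6}{5}$.

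The only point requiring care is identifying the auxiliary curve $C_1'$ appearing in $N(v)$ on $[\tfrac{3}{2},2]$, which is not drawn in the local dual graph of the lemma; its existence and intersection pattern are forced by the global geometry of the surface $S$ in each case where this lemma will be invoked, exactly as in the analogous transitions handled in Lemmas~\ref{deg3-nearA3points}, \ref{deg3-3625nearA4points} and~\ref{deg3-107_nearA5points}.
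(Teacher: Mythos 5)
Your proposal is correct and follows essentially the same route as the paper: the Zariski decomposition is obtained from the relation $-K_S-vA\sim_{\DR}(2-v)A+2B_1+C_1+B_2+C_1'$ (equivalently $P(2)=0$), one computes $S_S(A)=\tfrac{5}{6}$ for the upper bound, and for $P\in A\setminus(B_1\cup B_2)$ one has $(N(v)\cdot A)_P=0$ so that $h(v)=\tfrac{1}{2}(P(v)\cdot A)^2$, giving $S(W^A_{\bullet,\bullet};P)=\tfrac{5}{12}\le\tfrac{5}{6}$ and hence $\delta_P(S)=\tfrac{6}{5}$ via the estimate~(\ref{estimation1}). The only cosmetic difference is your remark that $C_1'$ is absent from the dual graph; since $C_1'$ appears explicitly in the stated $N(v)$, it is part of the lemma's hypothesis, and your inferred intersection pattern ($C_1'\cdot B_1=1$, $C_1'\cdot A=C_1'\cdot C_1=C_1'\cdot B_2=0$) is exactly the one consistent with all the stated formulas.
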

\begin{proof}
The Zariski Decomposition follows from $-K_S-vA\sim_{\DR} (2-v)A+2B_1+C_1+B_2+C_1'$.
We have
$S_S(A)=\frac{5}{6}$. Thus, $\delta_P(S)\le \frac{6}{5}$ for $P\in A$. Note that for $P\in A\backslash (B_1\cup B_2)$ we have:
$$h(v) =\begin{cases}
\frac{(6-v)^2}{72}\text{ if }v\in\big[0,\frac{3}{2}\big],\\
\frac{9(2-v)^2}{8}\text{ if }v\in\big[\frac{3}{2},2\big].
\end{cases}$$
So   $S(W_{\bullet,\bullet}^{A};P)\le\frac{5}{6}$.
Thus, $\delta_P(S)=\frac{6}{5}$ if $P\in A\backslash (B_1\cup B_2)$.
\end{proof}
\begin{lemma}\label{deg3-2723nearA4points}
Suppose $P$ belongs to a $(-1)$-curve $A$ and there exist $(-1)$-curves and $(-2)$-curves   which form the following dual graph:
\begin{figure}[h!]
    \centering
\includegraphics[width=6cm]{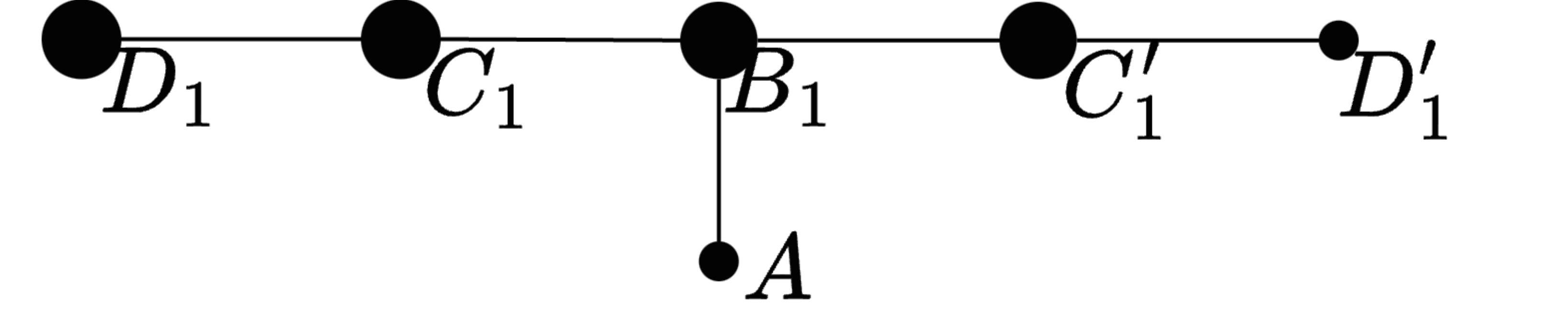}
    \caption{Dual graph: $(-K_S)^2=3$ and $\delta_P(S)=\frac{27}{23}$}
\end{figure}
\par Then  $\tau(A)=2$ and the Zariski Decomposition of the divisor $-K_S-vA$ is given by:
{\allowdisplaybreaks\begin{align*}
    &&P(v)=\begin{cases}
-K_S-vA-\frac{v}{5}(6B_1+3C_1'+4C_1+2D_1)\text{ if }v\in\big[0,\frac{5}{3}\big],\\
-K_S-vA-(v-1)(3B_1+2C_1+D_1)
-(3v-4)C_1'-(3v-5)D_1'\text{ if }v\in\big[\frac{5}{3},2\big].
\end{cases}\\&&N(v)=\begin{cases}\frac{v}{5}(6B_1+3C_1'+4C_1+2D_1)\text{ if }v\in\big[0,\frac{5}{3}\big],\\
(v-1)(3B_1+2C_1+D_1)+(3v-4)C_1'+(3v-5)D_1'\text{ if }v\in\big[\frac{5}{3},2\big].
\end{cases}
\end{align*}}
Moreover, 
$$(P(v))^2=\begin{cases}
\frac{v^2}{5}-2v+3\text{ if }v\in\big[0,\frac{5}{3}\big],\\
2(2-v)^2\text{ if }v\in\big[\frac{5}{3},2\big].
\end{cases}P(v)\cdot A=\begin{cases}
1-\frac{v}{5}\text{ if }v\in\big[0,\frac{5}{3}\big],\\
2(2-v)\text{ if }v\in\big[\frac{5}{3},2\big].
\end{cases}$$
In this case $\delta_P(S)=\frac{27}{23}\text{ if }P\in A\backslash B_1$.
\end{lemma}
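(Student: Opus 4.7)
The plan is to follow exactly the same scheme as in Lemmas~\ref{deg3-nearA1A2points} and~\ref{deg3-2719nearA4points}: (i) guess the Zariski decomposition of $-K_S - vA$ from an explicit $\DR$-linear equivalence, verifying nefness of the positive part and negative-definiteness of the support of the negative part; (ii) integrate $(P(v))^2$ to obtain $S_S(A)$ and hence the upper bound on $\delta_P(S)$; (iii) bound $S(W^A_{\bullet,\bullet};P)$ for $P \in A \setminus B_1$ using \eqref{estimation1} to obtain the matching lower bound.

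For step~(i), I start from an identity of the shape
$$-K_S - vA \sim_{\DR} (2-v)A + \tfrac{1}{5}\bigl(6B_1 + 3C_1' + 4C_1 + 2D_1\bigr) + (\text{effective pieces supported on } D_1'),$$
which pins down $\tau(A) = 2$ and the candidate $P(v), N(v)$ on $[0, 5/3]$. I then verify nefness of $P(v)$ by intersecting it with every curve in the dual graph, using the stated self-intersections and adjacencies. The breakpoint $v = 5/3$ is identified as the value at which $P(v)$ becomes trivial on $D_1'$; on $[5/3, 2]$ I promote $C_1'$ and $D_1'$ into $N(v)$ with the stated coefficients $(3v-4)$ and $(3v-5)$ and re-verify nefness. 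Negative-definiteness of the support of $N(v)$ is immediate from the classification of $(-1)/(-2)$-configurations appearing.

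For step~(ii), the formulas for $(P(v))^2$ and $P(v) \cdot A$ follow by direct substitution, and integration gives $S_S(A) = \tfrac{1}{3}\int_0^2 (P(v))^2\,dv = \tfrac{23}{27}$, hence $\delta_P(S) \le \tfrac{27}{23}$ for all $P \in A$. For step~(iii), $B_1$ is the only component of $N(v)$ whose intersection with $A$ is nonzero, so for $P \in A \setminus B_1$ we have $(N(v) \cdot A)_P = 0$ and $h(v) = \tfrac{1}{2}(P(v) \cdot A)^2$; integrating yields $S(W^A_{\bullet,\bullet};P) \le \tfrac{23}{27}$, and \eqref{estimation1} then furnishes $\delta_P(S) \ge \tfrac{27}{23}$.

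The main obstacle is correctly identifying, from the dual graph, which auxiliary $(-1)$-curves (here $C_1'$ and $D_1'$) enter the negative part and at what threshold, so that the modified $P(v)$ remains nef on the second interval $[5/3, 2]$. This is an intersection-theoretic bookkeeping exercise of exactly the same character as in Lemma~\ref{deg3-2719nearA4points} (for the $A_4$ chain portion) and Lemma~\ref{deg3-nearA1A2points} (for the secondary $(-1)$-curves entering at a breakpoint), and should present no conceptual difficulty beyond careful accounting of coefficients.
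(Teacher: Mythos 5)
Your proposal follows the paper's proof essentially verbatim: the paper likewise derives the Zariski decomposition from the effective $\mathbb{R}$-divisor identity $-K_S-vA\sim_{\mathbb{R}}(2-v)A+3B_1+2C_1+D_1+2C_1'+D_1'$ (note the $v$-independent integer coefficients here, rather than the $\tfrac{1}{5}$-weighted expression you wrote down, which is really $N(1)$ and cannot be completed to an effective remainder by adding multiples of $D_1'$ alone --- though the nefness check you describe would correct this), then computes $S_S(A)=\tfrac{23}{27}$, and for $P\in A\setminus B_1$ uses $h(v)=\tfrac{1}{2}\big(P(v)\cdot A\big)^2$ to get $S\big(W^A_{\bullet,\bullet};P\big)=\tfrac{11}{27}<\tfrac{23}{27}$. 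Since $B_1$ is indeed the only component of $N(v)$ meeting $A$, your steps (i)--(iii) reproduce the paper's argument and yield $\delta_P(S)=\tfrac{27}{23}$ for $P\in A\setminus B_1$.
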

\begin{proof}
The Zariski Decomposition follows from $-K_S-vA\sim_{\DR} (2-v)A+3B_1+2C_1+D_1+2C_1'+D_1'$. We have $S_S(A)=\frac{23}{27}$. Thus, $\delta_P(S)\le \frac{27}{23}$ for $P\in A$. Moreover, for $P\in A\backslash B_1$ we have:
$$h(v) =\begin{cases}
\frac{ (5 - v)^2}{50}\text{ if }v\in\big[0,\frac{5}{3}\big],\\
2(2-v)^2\text{ if }v\in\big[\frac{5}{3},2\big].
\end{cases}$$
So  
$S(W_{\bullet,\bullet}^{A};P)\le \frac{11}{27}<\frac{23}{27}$.
Thus, $\delta_P(S)=\frac{27}{23}$ if $P\in A\backslash B_1$.
\end{proof}
\begin{lemma}\label{deg3-nearA1A3points}\label{deg3-98-2-points}
Suppose $P$ belongs to a $(-1)$-curve $A$ and there exist $(-1)$-curves and $(-2)$-curves   which form the following dual graph:
\begin{figure}[h!]
    \centering
\includegraphics[width=12cm]{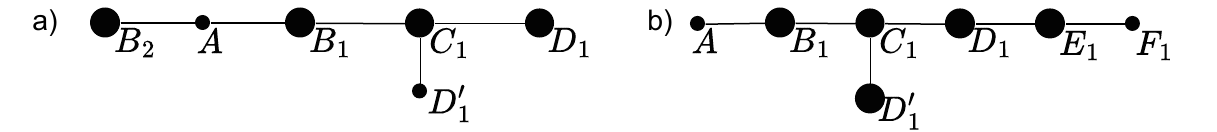}
    \caption{Dual graph: $(-K_S)^2=3$ and $\delta_P(S)=\frac{9}{8}$}
\end{figure}
\par
 Then the Zariski Decomposition of the divisor $-K_S-vA$ is given by:
  {\allowdisplaybreaks\begin{align*}
&{\text{\bf a). }}&P(v)=-K_S-vA-\frac{v}{4}(3B_1+2C_1+D_1+2B_2)\text{ if }v\in[0,2].\\&&N(v)=\frac{v}{4}(3B_1+2C_1+D_1+2B_2)\text{ if }v\in[0,2].\\
&{\text{\bf b). }}&P(v)=-K_S-vA-\frac{v}{4}(5B_1+6C_1 + 3D_1' + 4D_1 + 2E_1)\text{ if }v\in[0,2].\\&&N(v)=\frac{v}{4}(5B_1+6C_1 + 3D_1' + 4D_1 + 2E_1)\text{ if }v\in[0,2].
\end{align*}}
Moreover, 
$$(P(v))^2=\frac{(2-v)(6-v)}{4}\text{ if }v\in[0,2]\text{ and }P(v)\cdot A=1-\frac{v}{4}\text{ if }v\in[0,2].$$
In this case $\delta_P(S)=\frac{9}{8}\text{ if }P\in A\backslash B_1$.
\end{lemma}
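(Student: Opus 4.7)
The plan is to follow the template used for the preceding lemmas in this section: exhibit an explicit Zariski decomposition for $-K_S - vA$ on $v \in [0, \tau(A)]$, compute $S_S(A)$ from $\int_0^{\tau(A)} P(v)^2\,dv$ to obtain the upper bound $\delta_P(S) \leq 1/S_S(A)$, bound the local contribution $h(v)$ at a generic point of $A$ to obtain the matching lower bound via \eqref{estimation1}, and combine these. What is new compared to the nearby lemmas is that $\tau(A) = 2$ rather than $3/2$, and the entire interval $[0, 2]$ is a single Zariski chamber: no extra breakpoint is needed because the negative part $N(v)$ grows linearly in $v$ throughout.

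In subcase a), I would verify the decomposition by writing
$$-K_S - vA \sim_{\DR} (2-v)A + \tfrac{1}{2}\bigl(3B_1 + 2C_1 + D_1 + 2B_2\bigr),$$
and checking directly that the claimed positive part $P(v) = -K_S - vA - \tfrac{v}{4}(3B_1+2C_1+D_1+2B_2)$ has zero intersection with each of $B_1, C_1, D_1, B_2$. These four orthogonality conditions form a linear system against the negative-definite intersection matrix of the $\DA_3 + \DA_1$-configuration, and the coefficient vector $(3,2,1,2)$ is forced by it. Subcase b) is handled identically: the support of $N(v)$ is a $\mathbb{D}_5$-configuration ($B_1$--$C_1$--$D_1$--$E_1$ chain with $D_1'$ branching off $C_1$), and the coefficient vector $(5,6,3,4,2)$ must be checked against the five $\mathbb{D}_5$ orthogonality relations. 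Once $P(v)$ is verified to be nef, the formulas $P(v)^2 = (2-v)(6-v)/4$ and $P(v) \cdot A = 1 - v/4$ follow by direct intersection.

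From these, $S_S(A) = \tfrac{1}{3}\int_0^{2} (2-v)(6-v)/4\,dv = \tfrac{8}{9}$, which gives $\delta_P(S) \leq 9/8$ for every $P \in A$. For the lower bound, at a point $P \in A$ avoiding $B_1$ (and the other component of $N(v)$ meeting $A$ on the dual graph, where applicable) the local intersection $(N(v) \cdot A)_P$ vanishes, so
$$h(v) \leq \tfrac{1}{2}\bigl(P(v)\cdot A\bigr)^2 = \tfrac{(4-v)^2}{32},$$
whence $S\bigl(W^A_{\bullet,\bullet}; P\bigr) \leq \tfrac{2}{3}\int_0^{2} \tfrac{(4-v)^2}{32}\,dv = \tfrac{7}{18} < \tfrac{8}{9}$. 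Applying \eqref{estimation1} gives $\delta_P(S) \geq 9/8$, and combining with the upper bound forces equality.

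The main obstacle I anticipate is the verification of the Zariski decomposition in subcase b): the $\mathbb{D}_5$ intersection matrix is larger, the coefficient vector $(5,6,3,4,2)$ is less transparent than the $\DA_3$ fundamental cycle that controls subcase a), and a numerical slip in any of the five orthogonality checks would propagate into an incorrect $P(v)^2$ and invalidate both bounds. Everything downstream --- the integrals defining $S_S(A)$ and $S(W^A_{\bullet,\bullet};P)$, and the comparison $7/18 < 8/9$ --- is routine and parallels computations already executed earlier in the section.
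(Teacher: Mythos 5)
Your overall strategy is the same as the paper's: verify $P(v)\cdot B=0$ for each component $B$ of $N(v)$, compute $P(v)^2=3-2v+\tfrac{v^2}{4}$ and hence $S_S(A)=\tfrac{8}{9}$, bound $h(v)$, and conclude via \eqref{estimation1}. Those computations are correct as far as they go. However, there is a genuine gap in the lower bound: you establish $\delta_P(S)\ge\tfrac{9}{8}$ only at points of $A$ avoiding \emph{every} component of $N(v)$ that meets $A$, whereas the lemma asserts the conclusion for all $P\in A\setminus B_1$ --- in particular for the point $A\cap B_2$ in part a), where $A\cdot B_2=1$ (this is forced by your own orthogonality check $P(v)\cdot B_2=0$). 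That point cannot be discarded: in the application (case XII, $P\in L_{13}\setminus E_1$) the intersection of the line with the $\DA_1$-exceptional curve is exactly such a point and is not covered by any other lemma. At $P=A\cap B_2$ one has $\bigl(N(v)\cdot A\bigr)_P=\tfrac{v}{2}$, so
$$h(v)=\Bigl(1-\tfrac{v}{4}\Bigr)\cdot\tfrac{v}{2}+\tfrac12\Bigl(1-\tfrac{v}{4}\Bigr)^2=\frac{(4-v)(3v+4)}{32},$$
which integrates to $S\bigl(W^A_{\bullet,\bullet};P\bigr)\le\tfrac{5}{6}<\tfrac{8}{9}$; this is precisely the bound the paper uses uniformly on $A\setminus B_1$, and it is the extra half-line you need to add.

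A secondary point: the linear equivalence you propose, $-K_S-vA\sim_{\DR}(2-v)A+\tfrac12(3B_1+2C_1+D_1+2B_2)$, cannot hold, since pairing the right-hand side with $-K_S$ gives $2\cdot(-K_S\cdot A)=2$ while $(-K_S)^2=3$; any effective representative of $-K_S-2A$ must contain a component of positive anticanonical degree. This does not damage your argument, because you verify nefness and orthogonality of $P(v)$ directly and $\mathrm{vol}(-K_S-vA)=P(v)^2$ vanishes at $v=2$, so $S_S(A)=\tfrac{8}{9}$ is unaffected; but the displayed relation should either be corrected or replaced by the observation that bigness of $P(v)$ on $[0,2)$ already gives $\tau(A)\ge2$.
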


\begin{proof}
The Zariski Decomposition in part a). follows from $-K_S-vA\sim_{\DR} (2-v)A+2B_1+2C_1+D_1+D_1'+B_2.$ A similar statement holds in other parts. We have
$S_S(A)=\frac{8}{9}$. Thus, $\delta_P(S)\le \frac{9}{8}$ for $P\in A$. Moreover, for $P\in A\backslash B_1$ we have
$h(v) = \frac{(4 - v) (3 v + 4)}{32}\text{ if }v\in[0,2]$.
So 
$S(W_{\bullet,\bullet}^{A};P) \le \frac{5}{6}<\frac{8}{9}$.
Thus, $\delta_P(S)= \frac{9}{8}$ if $P\in A\backslash B_1$.
\end{proof}
\begin{lemma}\label{deg3-1817nearA1A4points}
Suppose $P$ belongs to a $(-1)$-curve $A$ and there exist $(-1)$-curves and $(-2)$-curves   which form the following dual graph:
\begin{figure}[h!]
    \centering
\includegraphics[width=7.5cm]{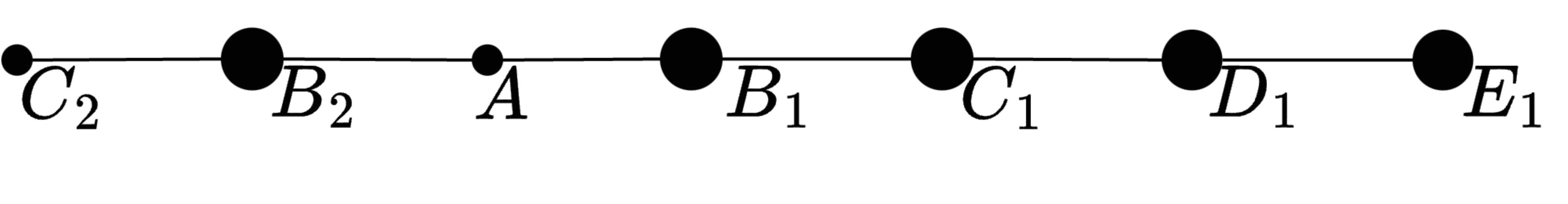}
    \caption{Dual graph: $(-K_S)^2=3$ and $\delta_P(S)=\frac{18}{17}$}
\end{figure}
\par Then  $\tau(A)=\frac{5}{2}$ and the Zariski Decomposition of the divisor $-K_S-vA$ is given by:
{\allowdisplaybreaks\begin{align*}
&&P(v)=\begin{cases}
-K_S-vA-\frac{v}{2}B_2-\frac{v}{5}(4B_1+3C_1+2D_1+E_1)\text{ if }v\in[0,2],\\
-K_S-vA-(v-1)B_2-(v-2)C_2-\frac{v}{5}(4B_1+3C_1+2D_1+E_1)\text{ if }v\in\big[2,\frac{5}{2}\big].
\end{cases}\\&&N(v)=\begin{cases}
\frac{v}{2}B_2+\frac{v}{5}(4B_1+3C_1+2D_1+E_1)\text{ if }v\in[0,2],\\
(v-1)B_2+(v-2)C_2+\frac{v}{5}(4B_1+3C_1+2D_1+E_1)\text{ if }v\in\big[2,\frac{5}{2}\big].
\end{cases}
\end{align*}}
Moreover, 
$$(P(v))^2=\begin{cases}
\frac{3v^2}{10}-2v+3\text{ if }v\in[0,2],\\
\frac{(5-2v)^2}{5}\text{ if }v\in\big[2,\frac{5}{2}\big].
\end{cases}P(v)\cdot A=\begin{cases}1-\frac{3v}{10}\text{ if }v\in[0,2],\\
2(1-\frac{2v}{5})\text{ if }v\in\big[2,\frac{5}{2}\big].
\end{cases}$$
In this case $\delta_P(S)=\frac{18}{17}\text{ if }P\in A\backslash B_1$.
\end{lemma}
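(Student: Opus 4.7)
The approach mirrors the template of the preceding lemmas: I would first verify the proposed Zariski decomposition, then use it to compute $S_S(A)$ (giving the upper bound $1/S_S(A)$), and finally bound $h(v)$ for $P \in A \setminus B_1$ to recover the matching lower bound via the Abban--Zhuang estimate \eqref{estimation1}.

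The pseudo-effective threshold and the decomposition can be read off from the linear equivalence
\[
-K_S - vA \sim_{\mathbb{R}} \Bigl(\tfrac{5}{2}-v\Bigr)A + \tfrac{1}{2}\bigl(4B_1 + 3C_1 + 2D_1 + E_1 + 3B_2 + C_2\bigr),
\]
which fixes $\tau(A) = 5/2$. To verify the two-piece decomposition I would check on each of the subintervals $[0,2]$ and $[2,5/2]$ that the listed negative part is effective with negative-definite intersection form and that $P(v) \cdot F = 0$ for every component $F$ of $\Supp(N(v))$; the break point $v = 2$ is the moment when the coefficient of $B_2$ in $N(v)$ reaches $1$, causing $C_2$ to enter the negative part. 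The formulas for $(P(v))^2$ and $P(v) \cdot A$ then follow from the intersection numbers, in particular $A^2 = -1$, $-K_S \cdot A = 1$, and $A \cdot B_1 = A \cdot B_2 = 1$ (with $A$ orthogonal to $C_1, D_1, E_1, C_2$). Integrating $(P(v))^2$ over $[0,5/2]$ yields $S_S(A) = 17/18$, hence $\delta_P(S) \le 18/17$ for every $P \in A$.

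For the matching lower bound, the worst case among points $P \in A \setminus B_1$ is the intersection $P_0 = A \cap B_2$, since $(N(v) \cdot A)_P$ vanishes at any other such point. At $P_0$ only $B_2$ among components of $\Supp(N(v))$ passes through $P_0$, so $(N(v) \cdot A)_{P_0}$ equals $v/2$ on $[0,2]$ and $v-1$ on $[2, 5/2]$. Substituting into $h(v) = (P(v) \cdot A)(N(v) \cdot A)_P + \tfrac{1}{2}(P(v) \cdot A)^2$ and integrating produces an explicit upper bound on $S(W^A_{\bullet,\bullet}; P)$ that is strictly less than $17/18$. Combined with $1/S_S(A) = 18/17$, the inequality \eqref{estimation1} then gives $\delta_P(S) \ge 18/17$, matching the upper bound.

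The principal obstacle is the bookkeeping for the Zariski decomposition on the second interval $[2, 5/2]$: one must confirm that after $C_2$ enters the negative part with coefficient $v-2$, the resulting $P(v)$ remains nef, i.e. that $P(v) \cdot C_2 = 0$ and $P(v) \cdot B_2 = 0$ throughout that range, while the coefficient of $B_2$ in $N(v)$ continues to track the slope $1$ dictated by the transition at $v=2$. Once this is verified, the remaining steps reduce to routine polynomial integration exactly as in Lemmas~\ref{deg3-2719nearA4points} and~\ref{deg3-nearA1A3points}.
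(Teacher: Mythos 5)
Your proposal is correct and follows essentially the same route as the paper: the same linear equivalence $-K_S-vA\sim_{\mathbb{R}}(\tfrac52-v)A+\tfrac12(4B_1+3C_1+2D_1+E_1+3B_2+C_2)$ pins down $\tau(A)=\tfrac52$ and the two-piece decomposition, $S_S(A)=\tfrac{17}{18}$ gives the upper bound, and bounding $h(v)$ by its value at $A\cap B_2$ (which is exactly the paper's stated $h(v)$, integrating to $S(W^A_{\bullet,\bullet};P)\le\tfrac56<\tfrac{17}{18}$) yields the matching lower bound via \eqref{estimation1}. Your nefness checks on $[2,\tfrac52]$ ($P(v)\cdot B_2=P(v)\cdot C_2=0$, with $C_2$ a $(-1)$-curve entering the negative part precisely when $P(v)\cdot C_2$ hits zero at $v=2$) are exactly the verification the paper leaves implicit.
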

\begin{proof}
The Zariski Decomposition follows from $$-K_S-vA\sim_{\DR} \Big(\frac{5}{2}-v\Big)A+\frac{1}{2}\Big(4B_1+3C_1+2D_1+E_1 + 3B_2+C_2\Big).$$ 
We have
$S_S(A)=\frac{17}{18}$. Thus, $\delta_P(S)\le \frac{18}{17}$ for $P\in A$. Moreover, for $P\in A\backslash B_1$ we have:
$$h(v) =\begin{cases}
\frac{(10 - 3 v) (7 v + 10)}{200}\text{ if }v\in[0,2],\\
\frac{6v (5- 2 v )}{25}\text{ if }v\in\big[2,\frac{5}{2}\big].
\end{cases}$$
So 
$S(W_{\bullet,\bullet}^{A};P) \le \frac{5}{6}<\frac{17}{18}$.
Thus, $\delta_P(S)=\frac{18}{17}$ if $P\in A\backslash B_1$.
\end{proof}
\begin{lemma}\label{deg3-A2point}
Suppose $P$ belongs to a $(-2)$-curve $A$ and there exist $(-1)$-curves and $(-2)$-curves   which form the following dual graph:
\begin{figure}[h!]
    \centering
 \includegraphics[width=15cm]{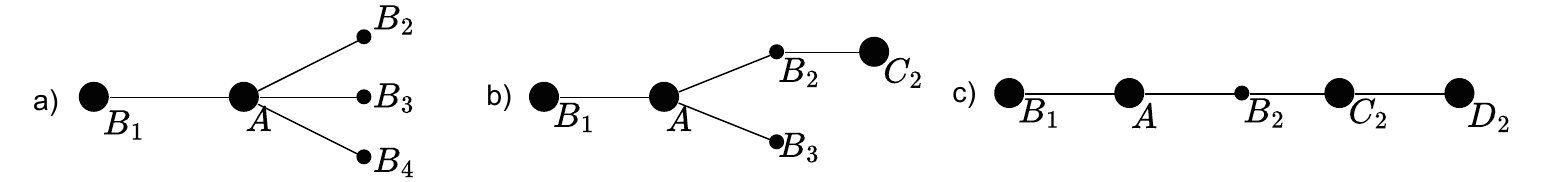}
    \caption{Dual graph: $(-K_S)^2=3$ and $\delta_P(S)=1$ with $\tau(A)=2$}
\end{figure}
\par  Then  $\tau(A)=2$ and the Zariski Decomposition of the divisor $-K_S-vA$ is given by:
{
{\allowdisplaybreaks\begin{align*}
&{\text{\bf a). }} &
P(v)=\begin{cases}
-K_S-vA-\frac{v}{2}B_1\text{ if }v\in[0,1],\\
-K_S-vA-\frac{v}{2}B_1-(v-1)(B_2+B_3+B_4)\text{ if }v\in[1,2].
\end{cases}
\\&&
N(v)=\begin{cases}\frac{v}{2}B_1\text{ if }v\in[0,1],\\
\frac{v}{2}B_1+(v-1)(B_2+B_3+B_4)\text{ if }v\in[1,2].
\end{cases}\\
&{\text{\bf b). }} &
P(v)=\begin{cases}
-K_S-vA-\frac{v}{2}B_1\text{ if }v\in[0,1],\\
-K_S-vA-\frac{v}{2}B_1-(v-1)(2B_2+C_2+B_3)\text{ if }v\in[1,2].
\end{cases}
\\&&
N(v)=\begin{cases}\frac{v}{2}B_1\text{ if }v\in[0,1],\\
\frac{v}{2}B_1+(v-1)(2B_2+C_2+B_3)\text{ if }v\in[1,2].
\end{cases}\\
&{\text{\bf c). }} &
P(v)=\begin{cases}
-K_S-vA-\frac{v}{2}B_1\text{ if }v\in[0,1],\\
-K_S-vA-\frac{v}{2}B_1-(v-1)(3B_2+2C_2+D_2)\text{ if }v\in[1,2].
\end{cases}
\\&&
N(v)=\begin{cases}\frac{v}{2}B_1\text{ if }v\in[0,1],\\
\frac{v}{2}B_1+(v-1)(3B_2+2C_2+D_2)\text{ if }v\in[1,2].
\end{cases}
\end{align*}}}
Moreover, 
$$(P(v))^2=\begin{cases}
3-\frac{3v^2}{2}\text{ if }v\in[0,1],\\
\frac{3(2-v)^2}{2}\text{ if }v\in[1,2].
\end{cases}P(v)\cdot A=\begin{cases}\frac{3v}{2}\text{ if }v\in[0,1],\\
3-\frac{3v}{2}\text{ if }v\in[1,2].
\end{cases}$$
In this case $\delta_P(S)=1$ if $P\in A$.
\end{lemma}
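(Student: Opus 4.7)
The plan follows the template of the earlier lemmas in this section (e.g.\ Lemma~\ref{deg3-nearA2point}). The first step is to verify the stated Zariski decomposition in each of the three parts by producing an effective $\DR$-equivalence of the form
$$-K_S-vA\sim_{\DR}(2-v)A+\Delta,$$
where $\Delta$ is supported on the $(-1)$- and $(-2)$-curves of the dual graph. The $(-2)$-curve $B_1$ adjacent to $A$ enters with coefficient $1$, and the remaining $(-2)$-curves enter with multiplicities dictated by the combinatorics of the adjacent chain: three unit coefficients from three separate $\DA_1$-components in case~(a); an $\DA_2$-chain $2B_2+C_2$ together with an extra $B_3$ in case~(b); and an $\DA_3$-chain $3B_2+2C_2+D_2$ in case~(c). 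This identifies $\tau(A)=2$ and pins down the structure of $P(v)$ and $N(v)$; the transition at $v=1$ is where the coefficient of $B_1$ in $N(v)$ reaches $1/2$ and the further $(-2)$-curves must be added to keep $P(v)$ nef. The formulas for $(P(v))^2$ and $P(v)\cdot A$ then follow from routine intersection calculations on the dual graph.

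The second step is to integrate the closed form for $(P(v))^2$:
$$S_S(A)=\frac{1}{3}\left(\int_0^1\left(3-\frac{3v^2}{2}\right)dv+\int_1^2\frac{3(2-v)^2}{2}dv\right)=\frac{1}{3}\left(\frac{5}{2}+\frac{1}{2}\right)=1,$$
which yields $\delta_P(S)\le 1$ for every $P\in A$.

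The third step is to apply the flag inequality \eqref{estimation1}. For $P\in A$ the local multiplicity $(N(v)\cdot A)_P$ is nonzero only when $P$ lies on a component of $N(v)$ crossing $A$. A case-by-case computation at each such intersection point shows that
$$h(v)=(P(v)\cdot A)\cdot(N(v)\cdot A)_P+\frac{(P(v)\cdot A)^2}{2}$$
integrates to at most $3/2$; for instance at $P=A\cap B_1$, where $(N(v)\cdot A)_P=v/2$ throughout $[0,2]$ (since the components entering $N(v)$ at $v=1$ meet $A$ away from $A\cap B_1$), one finds
$$\int_0^2 h(v)\,dv=\int_0^1\frac{15v^2}{8}\,dv+\int_1^2\left(\frac{9}{2}-3v+\frac{3v^2}{8}\right)dv=\frac{5}{8}+\frac{7}{8}=\frac{3}{2},$$
so $S(W_{\bullet,\bullet}^{A};P)=1$. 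The analogous computations at the other intersection points (e.g.\ $A\cap B_2$ in case (c), where the coefficient of $B_2$ in $N(v)$ is $3(v-1)$) give the same bound. Combined with $S_S(A)=1$, inequality \eqref{estimation1} forces $\delta_P(S)\ge 1$, hence $\delta_P(S)=1$ for all $P\in A$.

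The main obstacle, in contrast with Lemma~\ref{deg3-nearA2point}, is that the conclusion must hold at \emph{every} point of $A$, including the intersections with curves in $N(v)$. At these worst points the $h$-integral equals exactly $3/2$, so the bound $S(W_{\bullet,\bullet}^{A};P)\le 1$ is tight and must be carried through sharply. What makes this possible is the vanishing $P(0)\cdot A=0$, a consequence of $A$ being a $(-2)$-curve with $-K_S\cdot A=0$; this forces $h(v)$ to vanish quadratically at $v=0$ and just absorbs the extra contribution of $(N(v)\cdot A)_P$ at the intersection points.
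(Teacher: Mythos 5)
Your proposal is correct and follows the paper's proof essentially verbatim: the same effective relation $-K_S-vA\sim_{\DR}(2-v)A+\Delta$ establishes the Zariski decomposition, the computation $S_S(A)=1$ gives the upper bound, and the pointwise bounds on $h(v)$ (tight at $A\cap B_1$, and in case (c) also at $A\cap B_2$) give $S(W^{A}_{\bullet,\bullet};P)\le 1$, matching the paper's two displayed cases. The only slip is descriptive: the curves $B_2,B_3,\dots$ meeting $A$ that enter $N(v)$ with coefficient $v-1$ must be $(-1)$-curves rather than the ``$(-2)$-curves'' you call them (a $(-2)$-curve meeting $A$ would already have negative intersection with $-K_S-vA-\tfrac{v}{2}B_1$ for small $v>0$), but this does not affect your computations, which use the correct coefficients throughout.
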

\begin{proof}
The Zariski Decomposition in part a). follows from $-K_S-vA\sim_{\DR} (2-v)A+B_1+B_2+B_3+B_4$. A similar statement holds in other parts.
We have 
$S_S(A)=1$. Thus, $\delta_P(S)\le 1$ for $P\in A$. Moreover if $P\in A\cap B_1$ or  if $P\in A\backslash B_1$:
$$h(v) \le  \begin{cases}
\frac{15v^2}{8}\text{ if }v\in[0,1],\\
\frac{ 3 (2 - v) (6 - v)}{8}\text{ if }v\in[1,2].
\end{cases}
\text{ or }
h(v) \le  \begin{cases}
\frac{15v^2}{8}\text{ if }v\in[0,1],\\
\frac{9 (2 - v) (3 v - 2)}{8}\text{ if }v\in[1,2].
\end{cases}$$
So $S(W_{\bullet,\bullet}^{A};P)\le 1$. Thus, $\delta_P(S)=1$ if $P\in A$.
\end{proof}
\begin{lemma}\label{deg3-near2A2points}
\label{deg3-1_nearA5points}
Suppose $P$ belongs to a $(-1)$-curve $A$ and there exist $(-1)$-curves and $(-2)$-curves   which form the following dual graph:
\begin{figure}[h!]
    \centering
\hspace*{-0.5cm}\includegraphics[width=15.5cm]{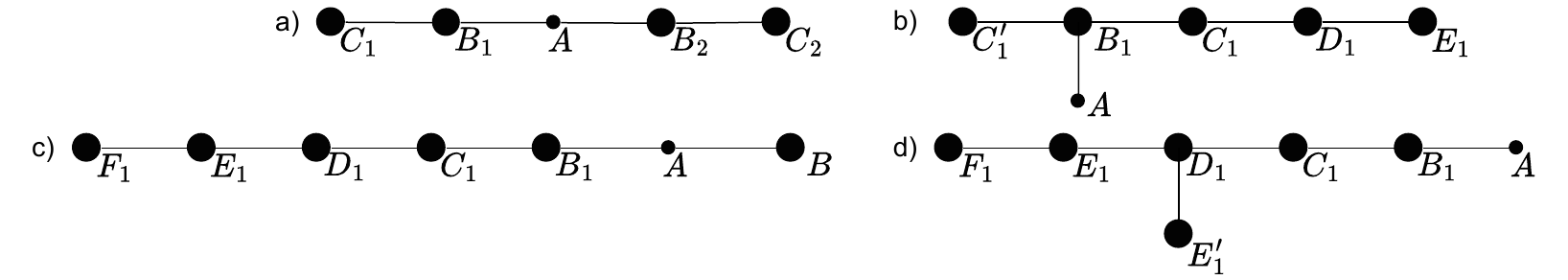}
    \caption{Dual graph: $(-K_S)^2=3$ and $\delta_P(S)=1$ with $\tau(A)=3$}
\end{figure}
\par  Then  $\tau(A)=3$ and the Zariski Decomposition of the divisor $-K_S-vA$ is given by:
{\allowdisplaybreaks\begin{align*}
  &{\text{\bf a). }} &    P(v)=-K_S-vA-\frac{v}{3}(2B_1+
C_1+2B_2+C_2)\text{ if }v\in[0,3].\\&&N(v)=\frac{v}{3}(2B_1+
C_1+2B_2+C_2)\text{ if }v\in[0,3].\\
&{\text{\bf b). }}&P(v)=-K_S-vA-\frac{v}{6}(4B_1+3C_1+2D_1+E_1+2C_1')\text{ if }v\in[0,3].\\&&N(v)=\frac{v}{6}(4B_1+3C_1+2D_1+E_1+2C_1')\text{ if }v\in[0,3].\\
&{\text{\bf c). }}&P(v)=-K_S-vA-\frac{v}{6}(5B_1+4C_1+3D_1+2E_1+F_1+3B)\text{ if }v\in[0,3].\\&&N(v)=\frac{v}{6}(5B_1+4C_1+3D_1+2E_1+F_1+3B)\text{ if }v\in[0,3].\\
&{\text{\bf d). }}&
P(v)=-K_S-vA-\frac{v}{6}(2F_1+4E_1+6D_1+5C_1+4B_1+3E_1')\text{ if }v\in[0,3].\\&& N(v)=\frac{v}{6}(2F_1+4E_1+6D_1+5C_1+4B_1+3E_1')\text{ if }v\in[0,3].\end{align*}}
Moreover, 
$$(P(v))^2=\frac{(3-v)^2}{2}\text{ if }v\in[0,3]\text{ and }P(v)\cdot A=1-\frac{v}{3}\text{ if }v\in[0,3].$$
In this case $\delta_P(S)=1$  if $P\in A\backslash (B_1\cup B_2)$.
\end{lemma}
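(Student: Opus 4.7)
The plan is to follow the template of the preceding lemmas in this section. In each of the four cases (a)--(d), the Zariski decomposition on $[0,3]$ is pinned down by a single linear equivalence of the form
$$-K_S-vA\sim_{\mathbb{R}}(3-v)A+N(3),$$
where $N(3)$ is the prescribed effective divisor built from the Dynkin chain of $(-2)$-curves attached to $A$ (with an auxiliary curve added in cases (b)--(d)). The multiplicities appearing in $N(v)$ are forced by requiring $P(v)\cdot C=0$ for every component $C$ of $\Supp N(v)$; this is a finite linear system whose coefficient matrix is the negative-definite intersection matrix of the configuration, so the solution is unique. One then checks that no other $(-1)$- or $(-2)$-curve $C'$ of $S$ satisfies $P(v)\cdot C'<0$ on $[0,3]$, so no intermediate breakpoint occurs. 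Nefness against $A$ itself fails precisely at $v=3$ since $P(v)\cdot A=1-v/3$, which confirms $\tau(A)=3$.

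The remaining intersection numbers follow uniformly from the decomposition, because $P(v)\cdot D=0$ for the support $D$ of the negative part. One obtains $P(v)\cdot A=1-v/3$ and
$$P(v)^2\;=\;(-K_S)\cdot P(v)-vA\cdot P(v)\;=\;(3-v)-v\bigl(1-\tfrac{v}{3}\bigr)\;=\;\tfrac{(3-v)^2}{3}.$$
Hence
$$S_S(A)\;=\;\tfrac{1}{3}\int_0^3\tfrac{(3-v)^2}{3}\,dv\;=\;1,$$
so $\delta_P(S)\leqslant 1$ for every $P\in A$. (The displayed formula for $(P(v))^2$ in the statement has denominator $2$; this appears to be a typographical slip, and the correct denominator is $3$, which is what is needed for $S_S(A)=1$.)

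For a point $P\in A\setminus(B_1\cup B_2)$, the support of $N(v)\big|_A$ is contained in the finite set of intersection points of $A$ with components of $\Supp N(v)$, all of which lie in $A\cap(B_1\cup B_2)$; hence $(N(v)\cdot A)_P=0$. Then
$$h(v)\;=\;\tfrac{(P(v)\cdot A)^2}{2}\;=\;\tfrac{(3-v)^2}{18},$$
and $S(W^{A}_{\bullet,\bullet};P)=\tfrac{2}{3}\int_0^3\tfrac{(3-v)^2}{18}\,dv=\tfrac{1}{3}$. Combined with $S_S(A)=1$, estimate \eqref{estimation1} gives $\delta_P(S)\geqslant\min\{1,3\}=1$, which matches the upper bound.

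The main obstacle is purely bookkeeping. In each of the four dual graphs one must verify that no curve outside $\Supp N(v)$ forces an additional subtraction on $[0,3]$. Since the possible configurations of $(-1)$- and $(-2)$-curves on minimal resolutions of Du Val del Pezzo surfaces of degree $3$ are explicitly classified (as used throughout this section following \cite{CorayTsfasman88}), this reduces to checking a short, finite list of linear inequalities in $v$ for each case.
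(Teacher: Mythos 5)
Your overall strategy is the same as the paper's (pin down $N(v)$ by the linear relation $-K_S-vA\sim_{\mathbb{R}}(3-v)A+N(3)$, compute $S_S(A)=1$, then bound $S(W^A_{\bullet,\bullet};P)$), and your observation that the displayed $(P(v))^2=\frac{(3-v)^2}{2}$ must be a typo for $\frac{(3-v)^2}{3}$ is correct: with $P(v)\cdot A=1-\frac{v}{3}$ and all components of $N(v)$ being $(-2)$-curves one gets $P(v)^2=(3-v)-v\bigl(1-\tfrac{v}{3}\bigr)=\tfrac{(3-v)^2}{3}$, which is exactly what is needed for $S_S(A)=1$.

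There is, however, a gap in your computation of $h(v)$. You assert that every intersection point of $A$ with $\Supp N(v)$ lies in $A\cap(B_1\cup B_2)$, so that $(N(v)\cdot A)_P=0$ for all $P\in A\setminus(B_1\cup B_2)$, giving $h(v)=\frac{(3-v)^2}{18}$. That is true in case a), where $N(v)\cdot A=\frac{4v}{3}$ is entirely accounted for by $A\cdot B_1=A\cdot B_2=1$. But in cases b)--d) there is no $B_2$ at all, and the identity $N(v)\cdot A=\frac{4v}{3}$ forces $A$ to meet components of $\Supp N(v)$ other than $B_1$ (for instance, in case c) one has $5(A\cdot B_1)+3(A\cdot B)+\dots=8$, so $A$ meets the curve $B$, whose coefficient in $N(v)$ is $\frac{v}{2}$). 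The points of $A$ on such components belong to $A\setminus(B_1\cup B_2)$ and are therefore covered by the lemma's conclusion, yet at them $(N(v)\cdot A)_P>0$ and your $h(v)$ underestimates the true value. This is why the paper works with $h(v)=\frac{(3-v)(2v+3)}{18}=\bigl(1-\tfrac{v}{3}\bigr)\cdot\tfrac{v}{2}+\tfrac{(3-v)^2}{18}$, i.e.\ it allows a local multiplicity $(N(v)\cdot A)_P\leqslant\frac{v}{2}$. The gap is harmless for the final conclusion --- even with this larger $h$ one gets $S(W^A_{\bullet,\bullet};P)\leqslant\frac{5}{6}<1$, hence $\delta_P(S)=1$ --- but as written your verification does not cover all the points the lemma claims to cover; you need to bound the local intersection multiplicity of $N(v)|_A$ at such points rather than declare it zero.
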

\begin{proof}
The Zariski Decomposition in part a). follows from $-K_S-vA\sim_{\DR} (3-v)A+2B_1+C_1+2B_2+C_2$. A similar statement holds in other parts.  We have
$S_S(A)=1$. Thus, $\delta_P(S)\le 1$ for $P\in A$. Moreover, for $P\in A\backslash (B_1\cup B_2)$ we have
$h(v) = \frac{(3 - v) (2 v + 3)}{18}\text{ if }v\in[0,3].$
So 
$S(W_{\bullet,\bullet}^{A};P) \le\frac{5}{6}<1$.
Thus, $\delta_P(S)=1$ if $P\in A\backslash (B_1\cup B_2)$.
\end{proof}
\begin{lemma}\label{deg3-boundaryA3points}
Suppose $P$ belongs to a $(-2)$-curve $A$ and there exist $(-1)$-curves and $(-2)$-curves   which form the following dual graph:
\begin{figure}[h!]
    \centering
\includegraphics[width=11cm]{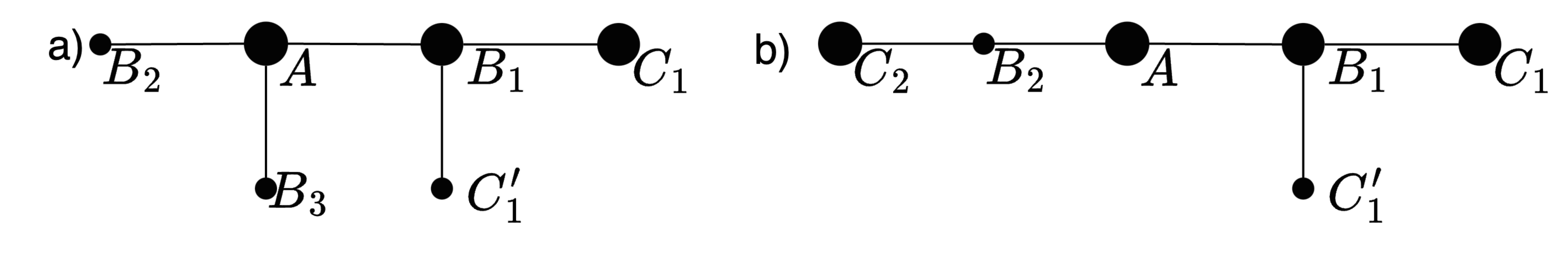}
    \caption{Dual graph: $(-K_S)^2=3$ and $\delta_P(S)=\frac{18}{19}$}
\end{figure}
\par Then  $\tau(A)=2$ and the Zariski Decomposition of the divisor $-K_S-vA$ is given by:{
{\allowdisplaybreaks\begin{align*}
&{\text{\bf a). }}&P(v)=\begin{cases}-K_S-vA-\frac{v}{3}(2B_1+C_1)\text{ if }v\in[0,1],\\
-K_S-vA-\frac{v}{3}(2B_1+C_1)-(v-1)(B_2+B_3)\text{ if }v\in\big[1,\frac{3}{2}\big],\\
-K_S-vA-(v-1)(2B_1+C_1+B_2+B_3)-(2v-3)C_1'\text{ if }v\in\big[\frac{3}{2},2\big].
\end{cases}\text{ }\\&&N(v)=\begin{cases}\frac{v}{3}(2B_1+C_1)\text{ if }v\in[0,1],\\
\frac{v}{3}(2B_1+C_1)+(v-1)(B_2+B_3)\text{ if }v\in\big[1,\frac{3}{2}\big],\\
(v-1)(2B_1+C_1+B_2+B_3)+(2v-3)C_1'\text{ if }v\in\big[\frac{3}{2},2\big].
\end{cases}\\
&{\text{\bf b). }}&P(v)=\begin{cases}-K_S-vA-\frac{v}{3}(2B_1+C_1)\text{ if }v\in[0,1],\\
-K_S-vA-\frac{v}{3}(2B_1+C_1)-(v-1)(2B_2+C_2)\text{ if }v\in\big[1,\frac{3}{2}\big],\\
-K_S-vA-(v-1)(2B_1+C_1+2B_2+C_2)-(2v-3)C_1'\text{ if }v\in\big[\frac{3}{2},2\big].
\end{cases}\\&&N(v)=\begin{cases}\frac{v}{3}(2B_1+C_1)\text{ if }v\in[0,1],\\
\frac{v}{3}(2B_1+C_1)+(v-1)(2B_2+C_2)\text{ if }v\in\big[1,\frac{3}{2}\big],\\
(v-1)(2B_1+C_1+2B_2+C_2)+(2v-3)C_1'\text{ if }v\in\big[\frac{3}{2},2\big].
\end{cases}
\end{align*}}}
Moreover, 
$$(P(v))^2=\begin{cases}
\frac{(3-2v)(3+2v)}{3}\text{ if }v\in[0,1],\\
\frac{2v^2}{3}-4v+5\text{ if }v\in\big[1,\frac{3}{2}\big],\\
2(2-v)^2\text{ if }v\in\big[\frac{3}{2},2\big].
\end{cases}P(v)\cdot A=\begin{cases}\frac{4}{3}v\text{ if }v\in[0,1],\\
2(1-\frac{v}{3})\text{ if }v\in\big[1,\frac{3}{2}\big],\\
2(2-v)\text{ if }v\in\big[\frac{3}{2},2\big].
\end{cases}$$
In this case $\delta_P(S)=\frac{18}{19}$\text{ if }$P\in A\backslash B_1$.
\end{lemma}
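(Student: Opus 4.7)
The plan is to adapt the template already established for $(-2)$-curves in this section (notably Lemmas~\ref{deg3-A1points} and \ref{deg3-A2point}). I would first write down the effective linear equivalence witnessing $\tau(A)=2$; for case a) this is
\[
-K_S-vA\sim_{\DR}(2-v)A+2B_1+C_1+B_2+B_3+C_1',
\]
and for case b) the same expression with $B_2+B_3$ replaced by $2B_2+C_2$. This immediately gives $\tau(A)\ge 2$, and equality follows because at $v=2$ the ``residual'' part becomes trivial.

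Next I would verify, on each of the three intervals $[0,1]$, $[1,\frac{3}{2}]$, $[\frac{3}{2},2]$, that the asserted $P(v),N(v)$ really form the Zariski decomposition. The two breakpoints $v=1$ and $v=\frac{3}{2}$ are exactly where new curves are absorbed into the negative part: at $v=1$ the $(-1)$-curves adjacent to $A$ (namely $B_2,B_3$, or $2B_2+C_2$ in case b)) begin to pair negatively with $-K_S-vA-\frac{v}{3}(2B_1+C_1)$, and at $v=\frac{3}{2}$ the tail $(-1)$-curve $C_1'$ attached to $C_1$ enters. On each interval the checks are (i) $P(v)\cdot N_i=0$ for every component $N_i$ of $N(v)$, and (ii) negative-definiteness of the Gram matrix of $\mathrm{Supp}\,N(v)$; both reduce to linear-algebra routines using $A^2=-2$, $-K_S\cdot A=0$, $A\cdot B_1=B_1\cdot C_1=C_1\cdot C_1'=1$, $A\cdot B_j=1$ for $j=2,3$ (or $B_2\cdot C_2=1$ in case b)), and zero otherwise. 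The formulas for $(P(v))^2$ and $P(v)\cdot A$ in the statement then fall out by bilinear expansion. Integrating yields
\[
S_S(A)=\frac{1}{3}\int_0^2(P(v))^2\,dv=\frac{19}{18},
\]
hence the upper bound $\delta_P(S)\le A_S(A)/S_S(A)=\frac{18}{19}$.

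For the matching lower bound I would apply inequality~\eqref{estimation1} with the smooth curve $A$. The key observation is that for $P\in A\setminus B_1$ the only components of $N(v)$ that can meet $A$ at $P$ are the transverse $(-1)$-curves among $B_2,B_3$ (or just $B_2$ in case b)), since neither $C_1$ nor $C_1'$ intersects $A$. Thus $(N(v)\cdot A)_P$ is bounded above by the coefficient of the relevant $(-1)$-curve in $N(v)$. Plugging this into $h(v)=(P(v)\cdot A)(N(v)\cdot A)_P+\frac{1}{2}(P(v)\cdot A)^2$ gives a piecewise-polynomial integrand supported on $[1,\frac{3}{2}]$, and integrating over the three intervals yields $S(W^A_{\bullet,\bullet};P)\le\frac{19}{18}$, hence $\delta_P(S)\ge\frac{18}{19}$.

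The principal difficulty is purely organizational: tracking the three intervals and two subcases simultaneously, verifying continuity of $P(v)$ at the two breakpoints $v=1$ and $v=\frac{3}{2}$, and extracting the sharp pointwise bound on $(N(v)\cdot A)_P$ in each regime. Once the Zariski decomposition is correctly stitched together, the remaining work is elementary polynomial integration, exactly as in Lemmas~\ref{deg3-A1points}--\ref{deg3-near2A2points}.
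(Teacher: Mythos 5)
Your proposal matches the paper's proof in all essentials: the same effective decomposition $(2-v)A+2B_1+C_1+C_1'+B_2+B_3$ establishing $\tau(A)=2$ and the claimed Zariski decomposition, the computation $S_S(A)=\frac{19}{18}$ for the upper bound, and the bound $S(W^A_{\bullet,\bullet};P)\le\frac{19}{18}$ obtained from $h(v)$ via inequality~\eqref{estimation1} for the matching lower bound. The only slip is your remark that the correction term is supported on $[1,\frac{3}{2}]$ --- in fact $B_2$ (and $B_3$) retain coefficient $v-1$ in $N(v)$ on $[\frac{3}{2},2]$, so $(N(v)\cdot A)_P$ can be nonzero there as well --- but this is harmless, since the correctly computed integral still yields $S(W^A_{\bullet,\bullet};P)\le\frac{8}{9}<\frac{19}{18}$, exactly as in the paper.
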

\begin{proof}
The Zariski Decomposition in part a). follows from $-K_S-vA\sim_{\DR} (2-v)A+2B_1+C_1+C_1'+B_2+B_3$. A similar statement holds in other parts.
We have
$S_S(A)=\frac{19}{18}$. Thus, $\delta_P(S)\le \frac{18}{19}$ for $P\in A$. Moreover, for $P\in A\backslash B_1$ we have:
$$h(v) \le \begin{cases}
\frac{8v^2}{9}\text{ if }v\in[0,1],\\
\frac{4v (3 - v)}{9}\text{ if }v\in\big[1,\frac{3}{2}\big],\\
2v(2 - v)\text{ if }v\in\big[\frac{3}{2},2\big].
\end{cases}$$
So 
$ S(W_{\bullet,\bullet}^{A};P) \le  \frac{8}{9}\le \frac{19}{18}$. Thus, $\delta_P(S)=\frac{18}{19}$ if $P\in A\backslash B_1$.
\end{proof}
\begin{lemma}\label{deg3-2729boundA4points}
Suppose $P$ belongs to a $(-2)$-curve $A$ and there exist $(-1)$-curves and $(-2)$-curves   which form the following dual graph:
\begin{figure}[h!]
    \centering
\includegraphics[width=6cm]{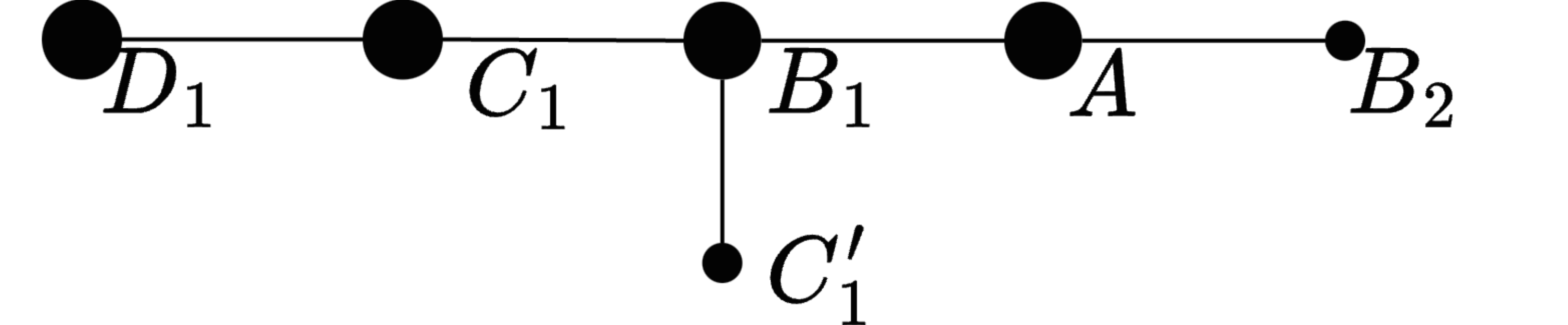}
    \caption{Dual graph: $(-K_S)^2=3$ and $\delta_P(S)=\frac{27}{29}$}
\end{figure}
\par Then  $\tau(A)=2$ and the Zariski Decomposition of the divisor $-K_S-vA$ is given by:
{   
{\allowdisplaybreaks\begin{align*}
&&P(v)=\begin{cases}-K_S-vA-\frac{v}{4}(3B_1+2C_1+D_1)\text{ if }v\in[0,1],\\
-K_S-vA-\frac{v}{4}(3B_1+2C_1+D_1)-(v-1)B_2\text{ if }v\in\big[1,\frac{4}{3}\big].\\
-K_S-vA-(v-1)(3B_1+2C_1+D_1+B_2)
-(3v-4)C_1'\text{ if }v\in\big[\frac{4}{3},2\big].
\end{cases}\\&&N(v)=\begin{cases}\frac{v}{4}(3B_1+2C_1+D_1)\text{ if }v\in[0,1],\\
\frac{v}{4}(3B_1+2C_1+D_1)+(v-1)B_2\text{ if }v\in\big[1,\frac{4}{3}\big],\\
(v-1)(3B_1+2C_1+D_1+B_2)+(3v-4)C_1'\text{ if }v\in\big[\frac{4}{3},2\big].
\end{cases}\end{align*}}
}
Moreover, 
$$(P(v))^2=\begin{cases}
3-\frac{5v^2}{4}\text{ if }v\in[0,1],\\
4-2v-\frac{v^2}{4}\text{ if }v\in\big[1,\frac{4}{3}\big],\\
2(2-v)^2\text{ if }v\in\big[\frac{4}{3},2\big].
\end{cases}P(v)\cdot A=\begin{cases}
\frac{5v}{4}\text{ if }v\in[0,1],\\
1+\frac{v}{4}\text{ if }v\in\big[1,\frac{4}{3}\big],\\
2(2-v)\text{ if }v\in\big[\frac{4}{3},2\big].
\end{cases}$$
In this case $\delta_P(S)=\frac{27}{29}\text{ if }P\in A\backslash B_1$.
\end{lemma}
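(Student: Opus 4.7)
The plan is to mirror the strategy used in the neighbouring lemmas (for instance Lemma~\ref{deg3-boundaryA3points} and Lemma~\ref{deg3-2723nearA4points}), adapted to the $\DA_4+\DA_1$-type dual graph in the picture. First I would produce a single linear equivalence of the form
$$
-K_S-vA\sim_{\DR}(2-v)A+3B_1+2C_1+D_1+B_2+2C_1',
$$
which immediately identifies $\tau(A)=2$ and furnishes a candidate negative part; the positive part is then the residual $\DR$-divisor. From this identity the stated piecewise Zariski Decomposition is read off by inspecting which of the curves $B_1,C_1,D_1,B_2,C_1'$ the moving part already meets non-negatively. The breakpoints $v=1$ and $v=\tfrac{4}{3}$ are exactly the values where the intersection of $-K_S-vA-\tfrac{v}{4}(3B_1+2C_1+D_1)$ with $B_2$ (resp.\ with $C_1'$, after also subtracting $(v-1)B_2$) changes sign, so the standard nef-checking on each piece of the dual graph confirms that the proposed $P(v)$ is nef and orthogonal to $N(v)$.

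Once the Zariski Decomposition is in hand, the claimed formulas for $(P(v))^2$ and $P(v)\cdot A$ are a routine intersection calculation on each interval, using the self-intersections $A^2=B_i^2=C_j^2=D_1^2=-2$, $(C_1')^2=-2$, and the adjacencies given by the graph. Integrating $(P(v))^2$ against $\frac{1}{K_S^2}=\frac{1}{3}$ then yields
$$
S_S(A)=\frac{1}{3}\Bigl(\int_0^1\!\Bigl(3-\tfrac{5v^2}{4}\Bigr)dv+\int_1^{4/3}\!\Bigl(4-2v-\tfrac{v^2}{4}\Bigr)dv+\int_{4/3}^2\!2(2-v)^2\,dv\Bigr)=\frac{29}{27},
$$
so the estimate~\eqref{estimation1} gives the upper bound $\delta_P(S)\le\frac{27}{29}$ for every $P\in A$.

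For the matching lower bound, I would apply~\eqref{estimation1} again: it suffices to prove $S(W^A_{\bullet,\bullet};P)\le\tfrac{29}{27}$ for $P\in A\setminus B_1$. The only contribution to $(N(v)\cdot A)_P$ at such a point comes from $C_1'$ on the third interval (and possibly $B_2$ on the second), so $h(v)$ is bounded by
$$
h(v)\le\begin{cases}\tfrac12(P(v)\cdot A)^2=\tfrac{25v^2}{32}&v\in[0,1],\\[2pt]\tfrac12(1+\tfrac{v}{4})^2+(1+\tfrac{v}{4})(v-1)&v\in[1,\tfrac{4}{3}],\\[2pt]2(2-v)^2+2(2-v)(3v-4)&v\in[\tfrac{4}{3},2],\end{cases}
$$
where the cross-term on each piece is $(P(v)\cdot A)\times(N(v)\cdot A)_P$ evaluated with the component of $N(v)$ meeting $A$ at a point different from $P$ via one of $B_2$ or $C_1'$. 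Integrating and dividing by $\tfrac{K_S^2}{2}=\tfrac{3}{2}$ should comfortably produce $S(W^A_{\bullet,\bullet};P)\le\tfrac{29}{27}$, and combined with the previous paragraph this forces $\delta_P(S)=\tfrac{27}{29}$.

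The main obstacle I anticipate is purely bookkeeping: one has to keep track of which exceptional components meet $A$, and at which point, in order to bound $(N(v)\cdot A)_P$ correctly on the interval $[\tfrac{4}{3},2]$ where both $C_1'$ and the accumulated contributions of $B_1,B_2$ enter. All of the quantitative steps are either intersection-number computations on the dual graph or elementary polynomial integrals; the conceptual content is entirely encoded in the $\DR$-linear equivalence at the start of the argument.
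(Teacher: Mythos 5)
Your proposal follows the paper's proof essentially verbatim: the same $\DR$-linear equivalence $-K_S-vA\sim_{\DR}(2-v)A+3B_1+2C_1+D_1+B_2+2C_1'$, the same computation $S_S(A)=\frac{29}{27}$, and the same strategy of bounding $h(v)$ to get $S(W^A_{\bullet,\bullet};P)\le\frac{29}{27}$ via \eqref{estimation1}.

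One bookkeeping slip: on $[\frac{4}{3},2]$ you take the cross term to be $2(2-v)(3v-4)$, attributing $(N(v)\cdot A)_P$ to $C_1'$. But $C_1'$ does not meet $A$ — this is forced by the stated value $P(v)\cdot A=2(2-v)$ on that interval, which only comes out right if $A$ meets exactly $B_1$ and $B_2$ among the components of $N(v)$. The worst case for $P\in A\setminus B_1$ is therefore $P\in A\cap B_2$ with $(N(v)\cdot A)_P=v-1$, giving $h(v)\le 2(2-v)^2+2(2-v)(v-1)=2(2-v)$ as in the paper; your expression $2(2-v)^2+2(2-v)(3v-4)$ fails to be an upper bound for such $P$ on $[\frac{4}{3},\frac{3}{2})$, where $v-1>3v-4$. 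The error is harmless here because the correct bound still integrates to $S(W^A_{\bullet,\bullet};P)\le\frac{19}{27}$, far below $\frac{29}{27}$, so the conclusion $\delta_P(S)=\frac{27}{29}$ stands once the cross term is assigned to the right component.
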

\begin{proof}
The Zariski Decomposition follows from $-K_S-vA\sim_{\DR} (2-v)A+3B_1+2C_1+D_1+2C_1'+B_2$. We have
$S_S(A)=\frac{29}{27}$.
Thus, $\delta_P(S)\le \frac{27}{29}$ for $P\in A$. Moreover, for $P\in  A\backslash B_1$ we have:
$$h(v) \le\begin{cases}
\frac{25v^2}{32} \text{ if }v\in[0,1],\\
\frac{(v + 4) (9 v - 4)}{32}\text{ if }v\in\big[0,\frac{4}{3}\big],\\
2(2-v)\text{ if }v\in\big[\frac{4}{3},2\big].
\end{cases}$$
So 
$S(W_{\bullet,\bullet}^{A};P) \le \frac{19}{27}<\frac{29}{27}$. Thus, $\delta_P(S)=\frac{27}{29}$ if $P\in A\backslash B_1$. 
\end{proof}
\begin{lemma}\label{deg3-1213_boundA5points}
Suppose $P$ belongs to a $(-2)$-curve $A$ and there exist $(-1)$-curves and $(-2)$-curves   which form the following dual graph:
\begin{figure}[h!]
    \centering
\includegraphics[width=6cm]{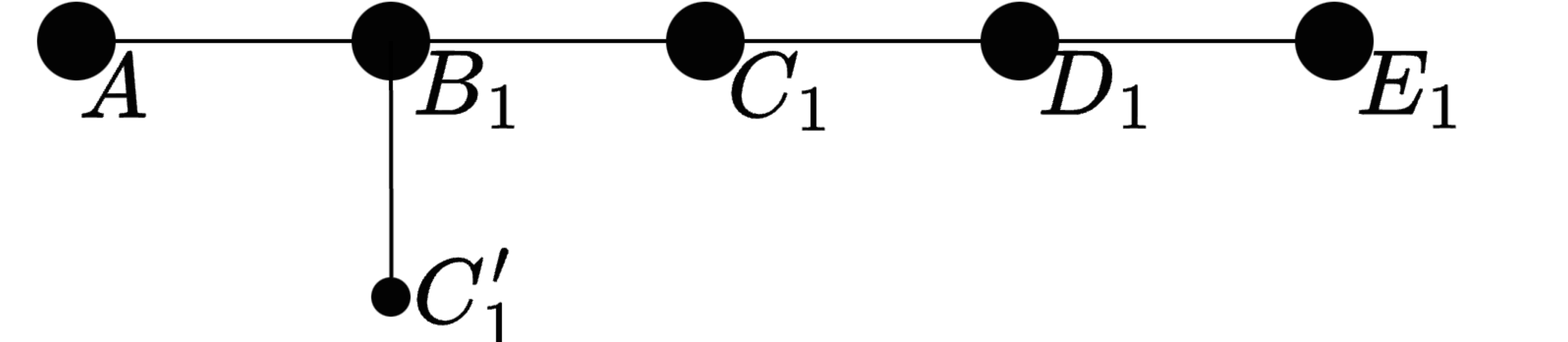}
    \caption{Dual graph: $(-K_S)^2=3$ and $\delta_P(S)=\frac{12}{13}$}
\end{figure}
\par Then  $\tau(A)=2$ and the Zariski Decomposition of the divisor $-K_S-vA$ is given by:
{   
{\allowdisplaybreaks\begin{align*}
&&P(v)=\begin{cases}
-K_S-vA-\frac{v}{5}(4B_1+3C_1+2D_1+E_1)\text{ if }v\in\big[0,\frac{5}{4}\big],\\
-K_S-vA-(v-1)(4B_1+3C_1+2D_1+E_1)-(4v - 5)C_1'\text{ if }v\in\big[\frac{5}{4},2\big],
\end{cases}\\&&N(v)=\begin{cases}\frac{v}{5}(4B_1+3C_1+2D_1+E_1)\text{ if }v\in[0,\frac{5}{4}\big],\\
(v-1)(4B_1+3C_1+2D_1+E_1)+(4v - 5)C_1'\text{ if }v\in\big[\frac{5}{4},2\big].
\end{cases}\end{align*}}
}
Moreover, 
$$(P(v))^2=\begin{cases}
3-\frac{6v^2}{5}\text{ if }v\in\big[0,\frac{5}{4}\big],\\
2(2-v)^2\text{ if }v\in\big[\frac{5}{4},2\big],
\end{cases}P(v)\cdot A=\begin{cases}
\frac{6v}{5}\text{ if }v\in\big[0,\frac{5}{4}\big],\\
2(2-v)\text{ if }v\in\big[\frac{5}{4},2\big].
\end{cases}$$
In this case $\delta_P(S)=\frac{12}{13}\text{ if }P\in A\backslash B_1.$
\end{lemma}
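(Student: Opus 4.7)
The plan is to follow the same template as Lemmas \ref{deg3-A1points}, \ref{deg3-boundaryA3points}, and \ref{deg3-2729boundA4points}, which handle the analogous smooth, $\DA_3$, and $\DA_4$ cases with $A$ a $(-2)$-curve at the boundary of a singular chain. The starting point is the identity
\[
-K_S \sim_{\DR} 2A + 4B_1 + 3C_1 + 2D_1 + E_1 + 3C_1',
\]
whose coefficients are uniquely forced by intersection with each component of the dual graph; this immediately gives $\tau(A) = 2$.

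For $v \in [0, 5/4]$, subtracting $\tfrac{v}{5}(4B_1 + 3C_1 + 2D_1 + E_1)$ produces a candidate $P(v)$ orthogonal to $B_1, C_1, D_1, E_1$, and a direct check shows that $P(v) \cdot C_1' = 1 - 4v/5$ remains non-negative on this range. At $v = 5/4$ this intersection hits zero, forcing $C_1'$ into the negative part; the second formula records the coefficient $(4v-5)$ on $C_1'$, and one verifies that the resulting $P(v)$ has zero intersection with every component of $N(v)$ and satisfies $P(v) \cdot A = 2(2-v) \ge 0$ on $[5/4,2]$. This confirms the stated Zariski decomposition, from which the formulas for $(P(v))^2$ and $P(v) \cdot A$ follow at once.

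Integrating,
\[
S_S(A) = \frac{1}{3}\int_0^{5/4}\!\Bigl(3 - \tfrac{6v^2}{5}\Bigr)dv + \frac{1}{3}\int_{5/4}^{2}\! 2(2-v)^2\,dv = \frac{13}{12},
\]
so $\delta_P(S) \le \frac{12}{13}$ for any $P \in A$ by (\ref{estimation1}). For the matching lower bound I exploit that $C_1'$ is attached to $B_1$, not to $A$, so $B_1$ is the unique component of $N(v)$ meeting $A$ throughout $v \in [0, 2]$. Hence for $P \in A \setminus B_1$ we have $(N(v)\cdot A)_P = 0$, giving $h(v) = (P(v)\cdot A)^2/2$, and a short computation yields $S(W^A_{\bullet,\bullet}; P) \le \frac{1}{2} < \frac{13}{12}$. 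Combined with (\ref{estimation1}) this forces $\delta_P(S) \ge \frac{12}{13}$, matching the upper bound.

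The only real obstacle is the bookkeeping in the two-piece Zariski decomposition: one has to confirm that no curve on $S$ outside the $\DA_5$-chain and $C_1'$ develops a negative intersection with $P(v)$ for some $v \in [0, 2]$. This follows the same graph-intersection pattern as Lemma \ref{deg3-2729boundA4points} and is essentially routine once the dual graph is fixed.
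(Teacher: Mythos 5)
Your proposal matches the paper's proof essentially line for line: the same $\mathbb{R}$-divisor identity $-K_S-vA\sim_{\DR}(2-v)A+4B_1+3C_1+2D_1+E_1+3C_1'$, the same two-piece Zariski decomposition with breakpoint at $v=\frac{5}{4}$, the same value $S_S(A)=\frac{13}{12}$, and the same computation $h(v)=\frac{(P(v)\cdot A)^2}{2}$ for $P\in A\backslash B_1$ giving $S(W^A_{\bullet,\bullet};P)\le\frac{1}{2}<\frac{13}{12}$. The only quibble is that the upper bound $\delta_P(S)\le\frac{12}{13}$ follows from the definition of $\delta_P$ as an infimum (taking the divisor $A$ itself), not from (\ref{estimation1}), which supplies only the lower bound.
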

\begin{proof}
The Zariski Decomposition follows from $-K_S-vA\sim_{\DR} (2-v)A+4B_1+3C_1+2D_1+E_1+3C_1'$. We have $S_S(A)=\frac{13}{12}$.
Thus, $\delta_P(S)\le \frac{12}{13}$ for $P\in A$. Moreover, for $P\in A\backslash B_1$ we have:
$$h(v) \le\begin{cases}
\frac{18v^2}{25}\text{ if }v\in\big[0,\frac{5}{4}\big],\\
2(2-v)^2\text{ if }v\in\big[\frac{5}{4},2\big].
\end{cases}$$
So  
$S(W_{\bullet,\bullet}^{A};P) \le \frac{1}{2}<\frac{13}{12}$.
Thus, $\delta_P(S)=\frac{12}{13}$ if $P\in A\backslash B_1$. 
\end{proof}
\begin{lemma}\label{deg3-910boundA4points}
Suppose $P$ belongs to a $(-2)$-curve $A$ and there exist $(-1)$-curves and $(-2)$-curves   which form the following dual graph:
\begin{figure}[h!]
    \centering
\includegraphics[width=13cm]{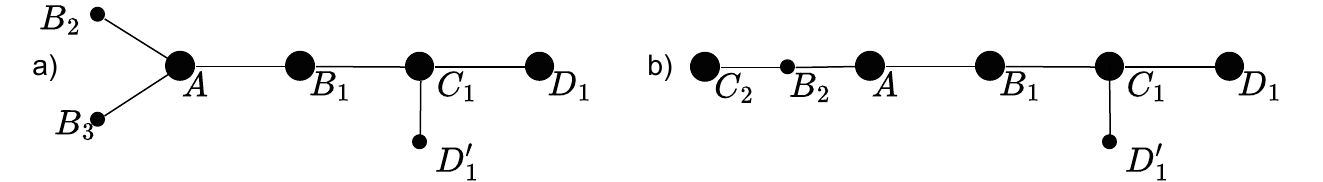}
    \caption{Dual graph: $(-K_S)^2=3$ and $\delta_P(S)=\frac{9}{10}$}
\end{figure}
\par Then  $\tau(A)=2$ and the Zariski Decomposition of the divisor $-K_S-vA$ is given by:
{   
{\allowdisplaybreaks\begin{align*}
&{\text{\bf a). }}&P(v)=\begin{cases}
-K_S-vA-\frac{v}{4}(3B_1+2C_1+D_1)\text{ if }v\in[0,1],\\
-K_S-vA-\frac{v}{4}(3B_1+2C_1+D_1)-(v-1)(B_2+B_3)\text{ if }v\in[1,2].
\end{cases}\\&&N(v)=\begin{cases}
\frac{v}{4}(3B_1+2C_1+D_1)\text{ if }v\in[0,1],\\
\frac{v}{4}(3B_1+2C_1+D_1)+(v-1)(B_2+B_3)\text{ if }v\in[1,2].
\end{cases}\\
&{\text{\bf b). }}&P(v)=\begin{cases}
-K_S-vA-\frac{v}{4}(3B_1+2C_1+D_1)\text{ if }v\in[0,1],\\
-K_S-vA-\frac{v}{4}(3B_1+2C_1+D_1)-(v-1)(2B_2+C_2)\text{ if }v\in[1,2].
\end{cases}\\&&N(v)=\begin{cases}
\frac{v}{4}(3B_1+2C_1+D_1)\text{ if }v\in[0,1],\\
\frac{v}{4}(3B_1+2C_1+D_1)+(v-1)(2B_2+C_2)\text{ if }v\in[1,2].
\end{cases}
\end{align*}}
}
Moreover, 
$$(P(v))^2=\begin{cases}
3-\frac{5v^2}{4}\text{ if }v\in[0,1],\\
\frac{(2-v)(10-3v)}{2}\text{ if }v\in[1,2].
\end{cases}P(v)\cdot A=\begin{cases}
\frac{5v}{4}\text{ if }v\in[0,1],\\
2-\frac{3v}{4}\text{ if }v\in[1,2].
\end{cases}$$
In this case $\delta_P(S)=\frac{9}{10}\text{ if }P\in A\backslash B_1$.
\end{lemma}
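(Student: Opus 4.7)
The proof follows the same template as Lemmas \ref{deg3-2729boundA4points} and \ref{deg3-1213_boundA5points}. The starting point is the $\mathbb{R}$-linear equivalence
$$-K_S-vA\sim_{\DR}(2-v)A+\tfrac{1}{2}(3B_1+2C_1+D_1)+B_2+B_3$$
for case a), and the analogous identity with $2B_2+C_2$ replacing $B_2+B_3$ for case b). In each case the right-hand side is effective precisely on $[0,2]$, hence $\tau(A)=2$.

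To verify the stated $P(v), N(v)$, I would check on each of $[0,1]$ and $[1,2]$ that the candidate $P(v)$ is nef (by intersecting it with every irreducible component of $\mathrm{Supp}\,N(v)$ and with all curves adjacent in the dual graph) and that $P(v)\cdot N(v)=0$. The breakpoint $v=1$ arises exactly because $\bigl(-K_S-vA-\tfrac{v}{4}(3B_1+2C_1+D_1)\bigr)\cdot B_2$ (and $\cdot B_3$ in case a)) changes sign there, forcing $B_2,B_3$ (respectively $B_2,C_2$) into the negative part for $v>1$. The stated formulas for $(P(v))^2$ and $P(v)\cdot A$ then drop out of direct intersection computations using the dual-graph incidences together with $A^2=-2$ and the fact that all curves in the $\DA_3$ chain are $(-2)$-curves.

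Integrating $(P(v))^2$ over $[0,2]$ yields $S_S(A)=\tfrac{10}{9}$, so $\delta_P(S)\leq\tfrac{9}{10}$ for every $P\in A$. For the matching lower bound, fix $P\in A\setminus B_1$. The component $B_1\subset\mathrm{Supp}\,N(v)$ is the one meeting $A$ at the point $A\cap B_1$ with highest multiplicity, and since $P\notin B_1$ the local contribution $(N(v)\cdot A)_P$ vanishes unless $P$ happens to lie at $A\cap B_2$ or $A\cap B_3$; in that worst case it is bounded by $v-1$ on $[1,2]$ and is zero on $[0,1]$. Plugging into $h(v)=(P(v)\cdot A)(N(v)\cdot A)_P+\tfrac{1}{2}(P(v)\cdot A)^2$ and integrating then gives $S(W^A_{\bullet,\bullet};P)\leq S_S(A)=\tfrac{10}{9}$, and the Abban--Zhuang estimate \eqref{estimation1} yields $\delta_P(S)\geq\tfrac{9}{10}$, completing the proof.

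The main thing to watch out for is confirming from the dual graph that no curve outside the listed $N(v)$ produces a negative intersection with the candidate $P(v)$ on $[1,2]$, i.e.\ that the Zariski decomposition really is supported only on the components written down; this is a routine case check once the incidence pattern is read off from the picture, and it also determines which of the two subcases a) or b) one is in.
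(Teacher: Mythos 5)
Your overall strategy (exhibit $-K_S-vA$ as an effective $\mathbb{R}$-divisor to get $\tau(A)=2$, verify nefness and orthogonality of the candidate decomposition componentwise, compute $S_S(A)$, and bound $h(v)$ for $P\notin B_1$) is the same as the paper's, but the anchor identity you start from is false, and this is a genuine gap rather than a slip. In your proposed equivalence
$$-K_S-vA\sim_{\DR}(2-v)A+\tfrac{1}{2}(3B_1+2C_1+D_1)+B_2+B_3,$$
the curves $A,B_1,C_1,D_1$ are $(-2)$-curves and $B_2,B_3$ are $(-1)$-curves, so pairing the right-hand side with $-K_S$ gives $0+0+1+1=2$, while $(-K_S)\cdot(-K_S-vA)=3$; the two sides cannot be $\mathbb{R}$-linearly equivalent. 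Equivalently, at $v=2$ your identity reads $-K_S-2A\sim_{\DR}\tfrac12(3B_1+2C_1+D_1)+B_2+B_3$, which is exactly the $N(2)$ asserted in the lemma, so it would force $P(2)\sim_{\DR}0$ and contradict the lemma's own value $P(2)\cdot A=\tfrac12$. The correct identity (the one the paper uses) is $(2-v)A+2B_1+2C_1+D_1+D_1'+B_2+B_3$, which necessarily involves an additional $(-1)$-curve $D_1'$ attached to the chain in the dual graph; without some such extra component $-K_S-2A$ cannot be written as an effective divisor of the right anticanonical degree, so your justification of $\tau(A)=2$ and of the decomposition fails at the first step.

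Two smaller points. Your worst-case bound $(N(v)\cdot A)_P\le v-1$ on $[1,2]$ is only valid in case a); in case b) the negative part contains $(v-1)\cdot 2B_2$, so at $P=A\cap B_2$ the local contribution is $2(v-1)$ (this is why the paper bounds $h(v)\le\frac{(8-3v)(13v-8)}{32}$ there; the resulting $S(W^A_{\bullet,\bullet};P)\le\frac{17}{18}<\frac{10}{9}$ still closes the argument, so this is repairable). Also note that integrating the $(P(v))^2$ printed in the statement does not yield $\frac{10}{9}$: the formula on $[1,2]$ should be $\frac{(2-v)(10-3v)}{4}$ (it must match $3-\frac{5v^2}{4}$ at $v=1$), after which $S_S(A)=\frac{10}{9}$ as claimed; quoting the value without recomputing conceals this.
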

\begin{proof}
The Zariski Decomposition in part a). follows from $-K_S-vA\sim_{\DR} (2-v)A+2B_1+2C_1+D_1+D_1'+B_2+B_3$. A similar statement holds in other parts. We have 
$S_S(A)=\frac{10}{9}$. Thus, $\delta_P(S)\le \frac{9}{10}$ for $P\in A$. Moreover, for $P\in A\backslash B_1$ we have:
$$h(v) \le\begin{cases}
\frac{25 v^2}{32} \text{ if }v\in[0,1],\\
\frac{(8 - 3 v) (13 v - 8)}{32}\text{ if }v\in[1,2].
\end{cases}$$
So we have 
$S(W_{\bullet,\bullet}^{A};P) \le  \frac{17}{18}<\frac{10}{9}$.
Thus, $\delta_P(S)=\frac{9}{10}$ if $P\in A\backslash B_1$.
\end{proof}
\begin{lemma}\label{deg3-67_boundA5points}
Suppose $P$ belongs to a $(-2)$-curve $A$ and there exist $(-1)$-curves and $(-2)$-curves   which form the following dual graph:
\begin{figure}[h!]
    \centering
\includegraphics[width=16cm]{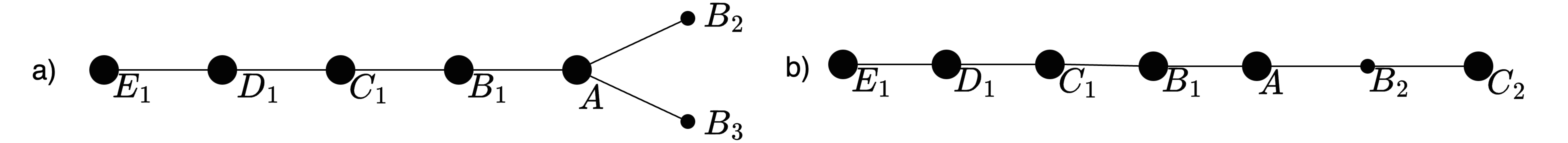}
    \caption{Dual graph: $(-K_S)^2=3$ and $\delta_P(S)=\frac{6}{7}$}
\end{figure}
\par Then  $\tau(A)=\frac{5}{2}$ and the Zariski Decomposition of the divisor $-K_S-vA$ is given by:
{
{\allowdisplaybreaks\begin{align*}
 &{\text{\bf a). }} & P(v)=\begin{cases}
 -K_S-vA-\frac{v}{5}(4B_1+3C_1+2D_1+E_1)\text{ if }v\in[0,1],\\
-K_S-vA-\frac{v}{5}(4B_1+3C_1+2D_1+E_1)-(v-1)(B_2+B_3)\text{ if }v\in\big[1,\frac{5}{2}\big].
\end{cases}\\&&N(v)=\begin{cases}\frac{v}{5}(4B_1+3C_1+2D_1+E_1)\text{ if }v\in[0,1\big],\\
\frac{v}{5}(4B_1+3C_1+2D_1+E_1)+(v-1)(B_2+B_3)\text{ if }v\in\big[1,\frac{5}{2}\big].
\end{cases}\\
 &{\text{\bf b). }} & P(v)=\begin{cases}
 -K_S-vA-\frac{v}{5}(4B_1+3C_1+2D_1+E_1)\text{ if }v\in[0,1],\\
-K_S-vA-\frac{v}{5}(4B_1+3C_1+2D_1+E_1)-(v-1)(2B_2+C_2)\text{ if }v\in\big[1,\frac{5}{2}\big].
\end{cases}\\&&N(v)=\begin{cases}\frac{v}{5}(4B_1+3C_1+2D_1+E_1)\text{ if }v\in[0,1\big],\\
\frac{v}{5}(4B_1+3C_1+2D_1+E_1)+(v-1)(2B_2+C_2)\text{ if }v\in\big[1,\frac{5}{2}\big].
\end{cases}
\end{align*}}
}
Moreover, 
$$(P(v))^2=\begin{cases}
3-\frac{6v^2}{5}\text{ if }v\in[0,1],\\
\frac{(5-2v)^2}{5}\text{ if }v\in\big[1,\frac{5}{2}\big].
\end{cases}
P(v)\cdot A=\begin{cases}
\frac{6v}{5}\text{ if }v\in[0,1],\\
2(1-\frac{2v}{5})\text{ if }v\in\big[1,\frac{5}{2}\big].
\end{cases}$$
In this case $\delta_P(S)=\frac{6}{7}\text{ if }P\in A\backslash B_1$.
\end{lemma}
\begin{proof}
The Zariski Decomposition in part a). follows from $$-K_S-vA\sim_{\DR} \Big(\frac{5}{2}-v\Big)A+\frac{1}{2}\Big(4B_1+3C_1+2D_1+E_1+3B_2+3B_3\Big).$$ A
similar statement holds in other parts.
We have
$S_S(A)=\frac{7}{6}$. Thus, $\delta_P(S)\le \frac{6}{7}$ for $P\in A$. Moreover, for $P\in A\backslash B_1$ we have:
$$h(v) \le\begin{cases}
\frac{18v^2}{25}\text{ if }v\in[0,1],\\
\frac{6 v (5-2 v )}{25}\text{ if }v\in\big[1,\frac{5}{2}\big].
\end{cases}$$
So 
$S(W_{\bullet,\bullet}^{A};P) \le \frac{7}{10}<\frac{7}{6}$. Thus, $\delta_P(S)=\frac{6}{7}$ if $P\in A\backslash B_1$.
\end{proof}
\begin{lemma}\label{deg3-A3middlepoints}\label{deg3-911_1_2}
Suppose $P$ belongs to a $(-2)$-curve $A$ and there exist $(-1)$-curves and $(-2)$-curves   which form the following dual graph:
\begin{figure}[h!]
    \centering
\includegraphics[width=10cm]{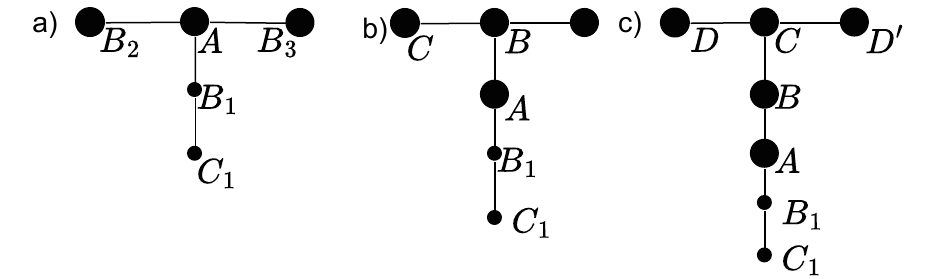}
    \caption{Dual graph: $(-K_S)^2=3$ and $\delta_P(S)=\frac{9}{11}$}
\end{figure}
\par Then  $\tau(A)=2$ and the Zariski Decomposition of the divisor $-K_S-vA$ is given by:{
{\allowdisplaybreaks\begin{align*}
   &{\text{\bf a). }} & P(v)=\begin{cases}-K_S-vA-\frac{v}{2}(B_2+B_3)\text{ if }v\in[0,1],\\
-K_S-vA-\frac{v}{2}(B_2+B_3)-(v-1)B_1\text{ if }v\in[1,2].
\end{cases}\\&&N(v)=\begin{cases}\frac{v}{2}(B_2+B_3)\text{ if }v\in[0,1],\\
\frac{v}{2}(B_2+B_3)+(v-1)B_1\text{ if }v\in[1,2].
\end{cases}\\
&{\text{\bf b). }} & P(v)=\begin{cases}-K_S-vA-\frac{v}{2}(2B+C+C')\text{ if }v\in[0,1],\\
-K_S-vA-\frac{v}{2}(2B+C+C')-(v-1)B_1\text{ if }v\in[1,2].
\end{cases}\\&&N(v)=\begin{cases}\frac{v}{2}(2B+C+C')\text{ if }v\in[0,1],\\
\frac{v}{2}(2B+C+C')+(v-1)B_1\text{ if }v\in[1,2].
\end{cases}\\
&{\text{\bf c). }} & P(v)=\begin{cases}-K_S-vA-\frac{v}{2}(2B+2C+D+D')\text{ if }v\in[0,1],\\
-K_S-vA-\frac{v}{2}(2B+2C+D+D')-(v-1)B_1\text{ if }v\in[1,2].
\end{cases}\\&&N(v)=\begin{cases}\frac{v}{2}(2B+2C+D+D')\text{ if }v\in[0,1],\\
\frac{v}{2}(2B+2C+D+D')+(v-1)B_1\text{ if }v\in[1,2].
\end{cases}
\end{align*}}}
Moreover, 
$$(P(v))^2=\begin{cases}
3-v^2\text{ if }v\in[0,1],\\
4-2v\text{ if }v\in[1,2].
\end{cases}P(v)\cdot A=\begin{cases}v\text{ if }v\in[0,1],\\
1\text{ if }v\in[1,2].
\end{cases}$$
In this case $\delta_P(S)=\frac{9}{11}$\text{ if }$P\in A\backslash B$.
\end{lemma}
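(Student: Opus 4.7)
The proof follows the template of the earlier lemmas in this section. First, I would verify the Zariski decomposition in each of cases a), b), c) using the $\DR$-linear equivalence
$$-K_S - vA \sim_{\DR} (2-v) A + N,$$
where $N$ is the explicit sum of $(-2)$-curves from the adjacent singularity configuration together with the neighbouring $(-1)$-curve $B_1$, whose coefficient becomes positive only at $v = 1$ (this is the reason for the break in the Zariski decomposition at $v = 1$). In each case one checks that $P(v)\cdot F = 0$ for every irreducible component $F$ of $N(v)$ and that the support of $N(v)$ has negative-definite intersection matrix, so that the given $P(v), N(v)$ are indeed the positive and negative parts. The stated formulas for $(P(v))^2$ and $P(v)\cdot A$ then follow by direct intersection computations from the dual graphs.

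A routine integration gives
$$S_S(A) = \frac{1}{3}\!\left(\int_0^1 (3 - v^2)\,dv + \int_1^2 (4 - 2v)\,dv\right) = \frac{1}{3}\!\left(\tfrac{8}{3} + 1\right) = \tfrac{11}{9},$$
yielding the upper bound $\delta_P(S) \le \tfrac{9}{11}$ for every $P \in A$. For the matching lower bound I would apply inequality (\ref{estimation1}) and estimate $h(v) = (P(v)\cdot A)(N(v)\cdot A)_P + \tfrac{1}{2}(P(v)\cdot A)^2$ for $P \in A\setminus B$. Since $B$ does not pass through $P$, only the $(-2)$-components of $N(v)$ meeting $A$ transversely at $P$ can contribute to $(N(v)\cdot A)_P$; inspecting each of the three configurations yields bounds of the form $h(v) \le v^2$ on $[0,1]$ and $h(v) \le \tfrac{v+1}{2}$ on $[1,2]$. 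Integrating against $\tfrac{2}{K_S^2} = \tfrac{2}{3}$ then gives $S(W^A_{\bullet,\bullet}; P) \le \tfrac{19}{18} < \tfrac{11}{9}$, so (\ref{estimation1}) forces $\delta_P(S) \ge \tfrac{9}{11}$ and equality holds.

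The main obstacle is the case-by-case bookkeeping needed to identify which component of $N(v)$ can meet $A$ at a point $P \in A\setminus B$ in each of the three configurations: the tails attached to $A$ differ in length and branching across a), b), c), so the worst-case contribution to $(N(v)\cdot A)_P$ must be read off the dual graph in each case and then checked to respect the piecewise bounds above. Once this is done, the bounds on $h(v)$ and $S(W^A_{\bullet,\bullet}; P)$ follow by direct computation exactly parallel to those in Lemmas \ref{deg3-A1points} and \ref{deg3-A2point}.
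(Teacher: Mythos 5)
Your proposal is correct and follows essentially the same route as the paper: verify the Zariski decomposition from the $\DR$-linear equivalence $-K_S-vA\sim_{\DR}(2-v)A+N$, compute $S_S(A)=\tfrac{11}{9}$ for the upper bound, and bound $h(v)\le v^2$ on $[0,1]$ and $h(v)\le\tfrac{v+1}{2}$ on $[1,2]$ to get $S(W^A_{\bullet,\bullet};P)\le\tfrac{19}{18}<\tfrac{11}{9}$, which is exactly the paper's (weaker of two) estimate. The case-by-case bookkeeping you flag is indeed the only remaining work, and the paper handles it the same way.
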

\begin{proof}
The Zariski Decomposition in part a). follows from $-K_S-vA\sim_{\DR} (2-v)A+B_2+B_3+2B_1+C_1$. A similar statement holds in other parts. We have 
$S_S(A)=\frac{11}{9}$. Thus, $\delta_P(S)\le \frac{9}{11}$ for $P\in A$. Moreover, for $P\in A\backslash B$ we have:
$$h(v) \le \begin{cases}
\frac{v^2}{2}\text{ if }v\in[0,1],\\
 v-\frac{1}{2}\text{ if }v\in[1,2]. \end{cases}
 \text{ or }
 h(v) \le \begin{cases}
 v^2\text{ if }v\in[0,1],\\
\frac{(v+1)}{2}\text{ if }v\in[1,2].\end{cases}$$
So $S(W_{\bullet,\bullet}^{A};P) \le  \frac{7}{9}<\frac{11}{9}$ or 
$S(W_{\bullet,\bullet}^{A};P) \le \frac{19}{18}<\frac{11}{9}$. Thus, $\delta_P(S)=\frac{9}{11}$ if $P\in A$.
\end{proof}
\begin{lemma}\label{deg3-1823middleA4points}
Suppose $P$ belongs to a $(-2)$-curve $A$ and there exist $(-1)$-curves and $(-2)$-curves   which form the following dual graph:
\begin{figure}[h!]
    \centering
\includegraphics[width=12cm]{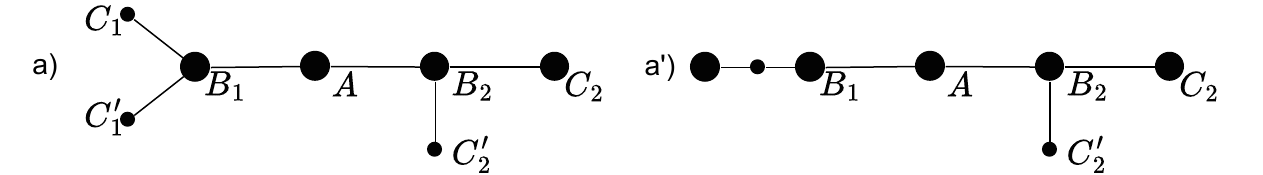}
    \caption{Dual graph: $(-K_S)^2=3$ and $\delta_P(S)=\frac{18}{23}$}
\end{figure}
\par Then  $\tau(A)=2$ and the Zariski Decomposition of the divisor $-K_S-vA$ is given by:
{{\allowdisplaybreaks\begin{align*}
&&P(v)=\begin{cases}-K_S-vA-\frac{v}{2}B_1-\frac{v}{3}(2B_2+C_2)\text{ if }v\in\big[0,\frac{3}{2}\big],\\
-K_S-vA-\frac{v}{2}B_1-(v-1)(2B_2+C_2)-(2v-3)C_2'\text{ if }v\in\big[\frac{3}{2},2\big].
\end{cases}\\&&N(v)=\begin{cases}\frac{v}{2}B_1+\frac{v}{3}(2B_2+C_2)\text{ if }v\in\big[0,\frac{3}{2}\big],\\
\frac{v}{2}B_1+(v-1)(2B_2+C_2)+(2v-3)C_2'\text{ if }v\in\big[\frac{3}{2},2\big].
\end{cases}\end{align*}}}
Moreover, 
$$(P(v))^2=\begin{cases}3-\frac{5v^2}{6}\text{ if }v\in\big[0,\frac{3}{2}\big],\\
\frac{(2-v)(6-v)}{2}\text{ if }v\in\big[\frac{3}{2},2\big],
\end{cases}P(v)\cdot A=\begin{cases}\frac{5v}{6}\text{ if }v\in\big[0,\frac{3}{2}\big].\\
2-\frac{v}{2}\text{ if }v\in\big[\frac{3}{2},2\big].
\end{cases}$$
In this case $\delta_P(S)=\frac{18}{23}\text{ if }P\in A\backslash B_2$.
\end{lemma}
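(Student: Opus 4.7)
The plan is to follow the template set by the preceding lemmas in this section, particularly Lemma~\ref{deg3-boundaryA3points} and Lemma~\ref{deg3-910boundA4points}. I would first write down the $\mathbb{R}$-linear equivalence
$$-K_S - vA \sim_{\DR} (2-v)A + B_1 + 2B_2 + C_2 + C_2'$$
read off directly from the dual graph. This exhibits $\tau(A) = 2$ and dictates the shape of $N(v)$ at the endpoint $v = 2$. On $v \in [0, 3/2]$ the $(-2)$-curve $B_1$ enters $N(v)$ with coefficient $v/2$ because $P(v) \cdot B_1 = -v - \tfrac{v}{2}B_1^2$ must vanish, while the $A_2$-chain $(B_2, C_2)$ attached to $A$ forces the contribution $\tfrac{v}{3}(2B_2 + C_2)$. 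The remaining $(-1)$-curve $C_2'$, which meets $B_2$ but not $A$, satisfies $P(v) \cdot C_2' = 1 - \tfrac{2v}{3}$ on this regime, so it reaches zero precisely at $v = 3/2$, which is where the decomposition transitions. On $v \in [3/2, 2]$ I would add $(2v-3)C_2'$ to $N(v)$ and saturate the $A_2$-chain coefficients to $(v-1)(2B_2 + C_2)$, then verify $P(v) \cdot F \ge 0$ on every curve $F$ of the configuration and that all coefficients in $N(v)$ are non-negative throughout.

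The stated values of $(P(v))^2$ and $P(v) \cdot A$ then come from routine intersection-number calculations: for example $P(v) \cdot A = 2v - (N(v) \cdot A) = 2v - \tfrac{7v}{6} = \tfrac{5v}{6}$ on the first regime. Integrating $(P(v))^2$ yields
$$S_S(A) = \frac{1}{3}\Bigl(\int_0^{3/2}\!\Bigl(3 - \tfrac{5v^2}{6}\Bigr)\,dv + \int_{3/2}^{2} \tfrac{(2-v)(6-v)}{2}\,dv\Bigr) = \frac{23}{18},$$
so $\delta_P(S) \le A_S(A)/S_S(A) = 18/23$ for every $P \in A$ since $A_S(A) = 1$. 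For the matching lower bound I would apply~\eqref{estimation1}. For $P \in A \setminus B_2$ the only component of $N(v)$ that can meet $A$ locally at $P$ is $B_1$ (and only at the single intersection $A \cap B_1$), so $(N(v) \cdot A)_P$ is bounded by an explicit piecewise linear function of $v$. Combined with the known $P(v) \cdot A$ this produces a piecewise quadratic upper bound on $h(v)$ whose integral is at most $S_S(A)$, yielding $\delta_P(S) \ge 18/23$.

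The main technical obstacle is the transition at $v = 3/2$: once $C_2'$ has entered $N(v)$ and the coefficient of $B_2$ has jumped from $\tfrac{2v}{3}$ to $2(v-1)$, nefness of $P(v)$ has to be re-verified against every curve in the configuration, and the negative-coefficient issue has to be checked at both endpoints $v = 3/2$ and $v = 2$. In particular one must confirm that the $C_2'$-coefficient $(2v-3)$ stays non-negative on $[3/2, 2]$ and that the updated $B_2$-coefficient $2(v-1)$ agrees by continuity at $v = 3/2$ with the previous value $\tfrac{2v}{3} \cdot 1$. Once these checks are carried out, everything else reduces to the same integration bookkeeping as in the earlier lemmas of the series.
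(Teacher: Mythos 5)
Your reconstruction of $N(v)$ from the local vanishing conditions $P(v)\cdot F=0$, the transition at $v=\tfrac{3}{2}$ where $P(v)\cdot C_2'=1-\tfrac{2v}{3}$ hits zero, the values of $(P(v))^2$ and $P(v)\cdot A$, the integral $S_S(A)=\tfrac{23}{18}$, and the bound on $h(v)$ for $P\in A\setminus B_2$ (only $B_1$ can contribute to $(N(v)\cdot A)_P$) all agree with the paper's argument. The one genuine error is the $\mathbb{R}$-linear equivalence you open with: $-K_S-vA\sim_{\DR}(2-v)A+B_1+2B_2+C_2+C_2'$ cannot hold, since pairing both sides with $-K_S$ gives $3$ on the left but only $1$ on the right ($A,B_1,B_2,C_2$ are $(-2)$-curves of anticanonical degree $0$ and $C_2'$ is a $(-1)$-curve of degree $1$). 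You have dropped part of the configuration: the dual graph for this lemma also has two $(-1)$-curves $C_1,C_1'$ meeting $B_1$, and the correct effective representative, which is the one the paper uses, is $-K_S-vA\sim_{\DR}(2-v)A+2B_1+C_1+C_1'+2B_2+C_2+C_2'$ (note the coefficient $2$ on $B_1$). This matters because that equivalence is the only step certifying that $-K_S-vA$ is pseudo-effective up to $v=2$, i.e.\ that $\tau(A)=2$ and your chamber decomposition covers all of $[0,\tau(A)]$; with the false equivalence that step has no justification.

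The gap is easily repaired: substituting the correct divisor, one finds $P(2)=B_1+C_1+C_1'$, which is nef with $P(2)^2=0$ and $P(2)\cdot(-K_S-vA)=2-v$, confirming $\tau(A)=2$; the rest of your computation then goes through unchanged and coincides with the paper's proof.
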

\begin{proof}
The Zariski Decomposition follows from $-K_S-vA\sim_{\DR} (2-v)A+2B_1+C_1+C_1'+2B_2+C_2+C_2'$. We have
$S_S(A)=\frac{23}{18}$. Thus, $\delta_P(S)\le \frac{18}{23}$ for $P\in A$. Moreover, for $P\in A\backslash B_2$ we have:
$$h(v) \le\begin{cases}
\frac{55 v^2}{72}\text{ if }v\in\big[0,\frac{3}{2}\big],\\
\frac{(4+v) (4-v)}{8} \text{ if }v\in\big[\frac{3}{2},2\big].
\end{cases}$$
So 
$S(W_{\bullet,\bullet}^{A};P) \le  \frac{10}{9}<\frac{23}{18}$. Thus, $\delta_P(S)=\frac{18}{23}$ if $P\in A\backslash B_2$.
\end{proof}
\begin{lemma}\label{deg3-2735_53_2_points}
Suppose $P$ belongs to a $(-2)$-curve $A$ and there exist $(-1)$-curves and $(-2)$-curves   which form the following dual graph:
\begin{figure}[h!]
    \centering
\includegraphics[width=6cm]{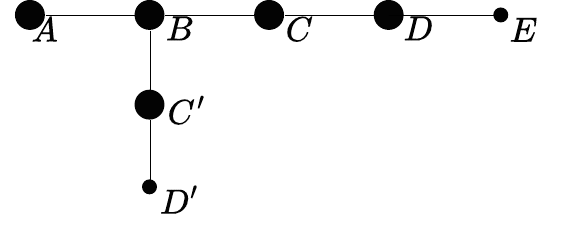}
    \caption{Dual graph: $(-K_S)^2=3$ and $\delta_P(S)=\frac{27}{35}$}
\end{figure}
\par Then  $\tau(A)=2$ and the Zariski Decomposition of the divisor $-K_S-vA$ is given by:
{   
{\allowdisplaybreaks\begin{align*}
&&P(v)=\begin{cases}
-K_S-v A-\frac{v}{5}(6B+4C+2D+3C')\text{ if }v\in\big[0,\frac{5}{3}\big],\\
-K_S-v A-(v-1)(3B+2C+D)-(3v - 4)C'-(3v - 5)D'\text{ if }v\in\big[\frac{5}{3},2\big].
\end{cases}\\&&N(v)=\begin{cases}\frac{v}{5}(6B+4C+2D+3C')\text{ if }v\in[0,\frac{5}{3}\big],\\
(v-1)(3B+2C+D)+(3v - 4)C'+(3v - 5)D'\text{ if }v\in\big[\frac{5}{3},2\big].
\end{cases}
\end{align*}}
}
Moreover, 
$$(P(v))^2=\begin{cases}3-\frac{4v^2}{5}\text{ if }v\in\big[0,\frac{5}{3}\big],\\
(2-v)(4-v)\text{ if }v\in\big[\frac{5}{3},2\big].
\end{cases}P(v)\cdot A=\begin{cases}
\frac{4v}{5}\text{ if }v\in\big[0,\frac{5}{3}\big],\\
3-v\text{ if }v\in\big[\frac{5}{3},2\big].
\end{cases}$$
In this case $\delta_P(S)=\frac{27}{35}\text{ if }P\in A\backslash B$.
\end{lemma}

\begin{proof}
The Zariski Decomposition  follows from $-K_S-vA\sim_{\DR} (2-v)A+4B+3C+2D+E+3C'+2D'$. We have
$S_S(A)=\frac{35}{27}$.
Thus, $\delta_P(S)\le \frac{27}{35}$ for $P\in A$. Moreover, for $P\in A\backslash B$ we have:
$$h(v) \le\begin{cases}
\frac{16v^2}{50}\text{ if }v\in\big[0,\frac{5}{3}\big],\\
\frac{(3 - v)^2}{2}\text{ if }v\in\big[\frac{5}{3},2\big].
\end{cases}$$
So 
$S(W_{\bullet,\bullet}^{A};P)\le \frac{13}{27}<\frac{35}{27}$. Thus, $\delta_P(S)=\frac{27}{35}$ if $P\in A\backslash B$.
\end{proof}
\begin{lemma}\label{deg3-34_middleA5points}
Suppose $P$ belongs to a $(-2)$-curve $A$ and there exist $(-1)$-curves and $(-2)$-curves   which form the following dual graph:
\begin{figure}[h!]
    \centering
\includegraphics[width=13cm]{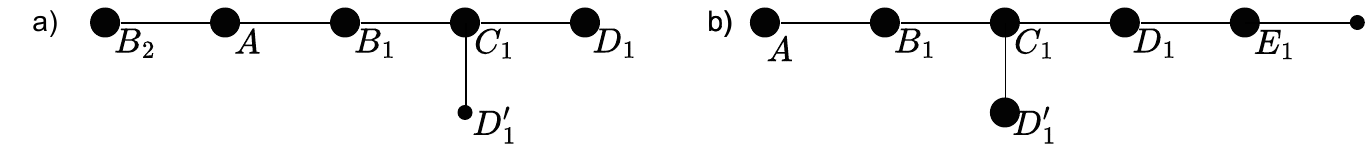}
    \caption{Dual graph: $(-K_S)^2=3$ and $\delta_P(S)=\frac{3}{4}$}
\end{figure}
\par Then  $\tau(A)=2$ and the Zariski Decomposition of the divisor $-K_S-vA$ is given by:
{   
{\allowdisplaybreaks\begin{align*}
&{\text{\bf a). }}&P(v)=-K_S-vA-\frac{v}{4}(2B_2+3B_1+2C_1+D_1)\text{ if }v\in[0,2].\\&&N(v)=\frac{v}{4}(2B_2+3B_1+2C_1+D_1)\text{ if }v\in[0,2].
\\&{\text{\bf b). }}&P(v)=-K_S-vA-\frac{v}{4}(5B_1+6C_1+4D_1+2E_1+3D_1')\text{ if }v\in[0,2].\\&&N(v)=\frac{v}{4}(5B_1+6C_1+4D_1+2E_1+3D_1')\text{ if }v\in[0,2].
\end{align*}}
}
Moreover, 
$$(P(v))^2=\frac{3(2-v)(2+v)}{4}\text{ if }v\in[0,2]\text{ and }P(v)\cdot A=\frac{3v}{4}\text{ if }v\in[0,2].$$
In this case $\delta_P(S)=\frac{3}{4}\text{ if }P\in A\backslash B_1$.
\end{lemma}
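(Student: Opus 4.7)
The proof should follow the same template as the preceding lemmas in this section. The plan is to verify the stated Zariski decomposition, extract $S_S(A)$ from $(P(v))^2$, then bound $S\big(W^A_{\bullet,\bullet};P\big)$ via the explicit formula for $h(v)$, and finally combine the two via inequality \eqref{estimation1}.

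First, in each of cases a) and b), I would exhibit the $\mathbb{R}$-linear equivalence
\begin{equation*}
-K_S-vA\sim_{\DR}(2-v)A+E,
\end{equation*}
where $E$ is an explicit effective combination of the curves appearing in the dual graph (in case a), $E=2B_2+3B_1+2C_1+D_1$ multiplied by $1/2$; in case b), the analogous combination on the $\mathbb{D}_5$-chain). Since the support of $N(v)$ in each case has negative definite intersection matrix and $P(v)$ intersects every component of $N(v)$ trivially, standard uniqueness of the Zariski decomposition on surfaces identifies the stated $P(v)$, $N(v)$ as the positive and negative parts, and gives $\tau(A)=2$. The formulas $(P(v))^2=\frac{3(2-v)(2+v)}{4}$ and $P(v)\cdot A=\frac{3v}{4}$ follow by a direct intersection computation, using $A^2=-2$ and the adjacency data in the dual graph.

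Next, a routine integration yields
\begin{equation*}
S_S(A)=\frac{1}{3}\int_0^2 \frac{3(2-v)(2+v)}{4}\,dv=\frac{4}{3},
\end{equation*}
so $\delta_P(S)\leqslant \frac{3}{4}$ for every $P\in A$. For the matching lower bound, I fix $P\in A\setminus B_1$. Then the only component of $N(v)$ that can meet $A$ at $P$ is a single neighbour of $A$ (namely $B_2$ in case a), or $D_1'$ in case b)), which enters $N(v)$ with coefficient $v/2$ and meets $A$ transversally, giving $\big(N(v)\cdot A\big)_P\leqslant \frac{v}{2}$. Combining with $P(v)\cdot A=\frac{3v}{4}$, one obtains $h(v)\leqslant \frac{3v}{4}\cdot\frac{v}{2}+\frac{1}{2}\bigl(\frac{3v}{4}\bigr)^2=\frac{21v^2}{32}$, whence $S\bigl(W^A_{\bullet,\bullet};P\bigr)\leqslant \frac{2}{3}\int_0^2 \frac{21v^2}{32}dv=\frac{7}{6}<\frac{4}{3}=\frac{1}{S_S(A)}^{-1}\cdot\ldots$; more precisely $1/S\bigl(W^A_{\bullet,\bullet};P\bigr)\geqslant \frac{6}{7}>\frac{3}{4}$. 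Applying \eqref{estimation1} then gives $\delta_P(S)\geqslant \frac{3}{4}$, and together with the upper bound this proves $\delta_P(S)=\frac{3}{4}$.

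The only real bookkeeping to get right is the identification, from each of the two dual graphs, of exactly which curves in $\mathrm{Supp}\,N(v)$ meet $A$ and with what transversality at a generic point of $A\setminus B_1$; the algebra of the intersection numbers and the integrations are mechanical. The main (mild) obstacle is that the chosen effective representative $E$ must lead to a $P(v)$ whose intersection with every $N(v)$-component vanishes identically in $v\in[0,2]$, so the coefficients in the decompositions in cases a) and b) must be tuned to the $\mathbb{D}_4$- and $\mathbb{D}_5$-geometry respectively; this is a finite linear-algebra check, exactly as in the preceding lemmas.
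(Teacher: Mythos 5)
Your proposal follows the paper's proof essentially verbatim: establish the Zariski decomposition from an explicit effective $\DR$-divisor representing $-K_S-2A$ and the vanishing of $P(v)$ against each component of $N(v)$, compute $S_S(A)=\tfrac{1}{3}\int_0^2\tfrac{3(4-v^2)}{4}\,dv=\tfrac{4}{3}$ for the upper bound, bound $h(v)\leqslant\tfrac{21v^2}{32}$ for $P\in A\setminus B_1$ to get $S\big(W^A_{\bullet,\bullet};P\big)\leqslant\tfrac{7}{6}$, and conclude $\delta_P(S)=\tfrac34$ from \eqref{estimation1} since $\tfrac67>\tfrac34$. These are exactly the numbers in the paper.

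One bookkeeping slip worth flagging in case b): from $P(v)\cdot A=\tfrac{3v}{4}=2v-\tfrac{v}{4}\big(5\,B_1\cdot A+6\,C_1\cdot A+4\,D_1\cdot A+2\,E_1\cdot A+3\,D_1'\cdot A\big)$ one reads off that $B_1$ is the \emph{only} component of $N(v)$ meeting $A$; the curve $D_1'$ is attached to $C_1$, carries coefficient $\tfrac{3v}{4}$ in $N(v)$ (not $\tfrac{v}{2}$), and does not meet $A$. This matters: if a neighbour of $A$ really entered $N(v)$ with coefficient $\tfrac{3v}{4}$, your estimate would degrade to $h(v)\leqslant\tfrac{27v^2}{32}$, giving $S\big(W^A_{\bullet,\bullet};P\big)\leqslant\tfrac32>\tfrac43$, and the minimum in \eqref{estimation1} would only yield $\delta_P(S)\geqslant\tfrac23$. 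As the configuration actually stands, in case b) one has $\big(N(v)\cdot A\big)_P=0$ for $P\in A\setminus B_1$, hence $h(v)=\tfrac{9v^2}{32}$, so your stated bound $\tfrac{21v^2}{32}$ (which is the sharp one for case a), where $B_2$ meets $A$ with coefficient $\tfrac{v}{2}$) remains valid and the proof goes through.
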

\begin{proof}
The Zariski Decomposition in part a). follows from $-K_S-vA\sim_{\DR} (2-v)A+B_2+3B_1+4C_1+2D_1+3D_1'$. A
similar statement holds in other parts.
We have $S_S(A)=\frac{4}{3}$.
Thus, $\delta_P(S)\le \frac{3}{4}$ for $P\in A$. Moreover, for $P\in A\backslash B_1$ we have
$h(v) \le \frac{21v^2}{32}\text{ if }v\in[0,2]$.
So $S(W_{\bullet,\bullet}^{A};P) \le  \frac{7}{6}<\frac{4}{3}$. Thus, $\delta_P(S)=\frac{3}{4}$ if $P\in A\backslash B_1$.
\end{proof}
\begin{lemma}\label{deg3-913middleA4points}
Suppose $P$ belongs to a $(-2)$-curve $A$ and there exist $(-1)$-curves and $(-2)$-curves   which form the following dual graph:
\begin{figure}[h!]
    \centering
\includegraphics[width=6cm]{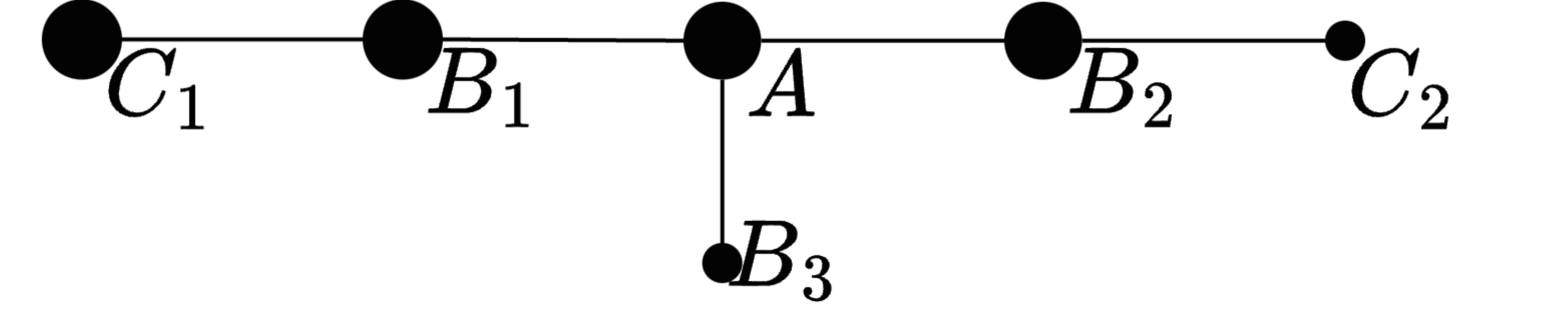}
    \caption{Dual graph: $(-K_S)^2=3$ and $\delta_P(S)=\frac{9}{13}$}
\end{figure}
\par Then  $\tau(A)=2$ and the Zariski Decomposition of the divisor $-K_S-vA$ is given by:
  {{\allowdisplaybreaks\begin{align*}
    &&P(v)=\begin{cases}-K_S-vA-\frac{v}{2}B_2-\frac{v}{3}(2B_1+C_1)\text{ if }v\in[0,1],\\
-K_S-vA-\frac{v}{2}B_2-\frac{v}{3}(2B_1+C_1)-(v-1)B_3\text{ if }v\in[1,2],\\
-K_S-vA-\frac{v}{3}(2B_1+C_1)-(v-1)(B_2+B_3)-(v-2)C_2\text{ if }v\in[2,3].
\end{cases}\\&&N(v)=\begin{cases}\frac{v}{2}B_2+\frac{v}{3}(2B_1+C_1)\text{ if }v\in[0,1],\\
\frac{v}{2}B_2+\frac{v}{3}(2B_1+C_1)+(v-1)B_3\text{ if }v\in[1,2],\\
\frac{v}{3}(2B_1+C_1)+(v-1)(B_2+B_3)+(v-2)C_2\text{ if }v\in[2,3].
\end{cases}\end{align*}}
  }
Moreover, 
$$(P(v))^2=\begin{cases}3-\frac{5v^2}{6}\text{ if }v\in[0,1],\\
\frac{v^2}{6}-2v+4\text{ if }v\in[1,2],\\
\frac{2(3-v)^2}{3}\text{ if }v\in[2,3].
\end{cases}P(v)\cdot A=\begin{cases}\frac{5v}{6}\text{ if }v\in[0,1],\\
1-\frac{v}{6}\text{ if }v\in[1,2],\\
2(1-\frac{v}{3})\text{ if }v\in[2,3].
\end{cases}$$
In this case $\delta_P(S)=\frac{9}{13}\text{ if }P\in A.$
\end{lemma}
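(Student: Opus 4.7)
The plan is to apply the Abban--Zhuang estimate \eqref{estimation1} to the $(-2)$-curve $A$. Note first that the displayed $N(v)$ extends up to $v=3$ (and $P(3)\cdot A = 0$), so the ``$\tau(A)=2$'' in the statement is a typo and the correct pseudo-effective threshold is $\tau(A)=3$; the required linear equivalence certifying this is $-K_S-vA\sim_{\mathbb{R}}(3-v)A+\tfrac{1}{3}(4B_1+2C_1+3B_2+3B_3+C_2)$ (or its variants in each case of the dual graph), which expresses $-K_S-3A$ as an effective combination with $P(3)\equiv 0$.

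I would verify the Zariski decomposition piecewise on $[0,1]$, $[1,2]$, $[2,3]$. On each interval the recipe is: identify the curves $C$ for which $(-K_S-vA-N_{\mathrm{prev}}(v))\cdot C<0$ and add them to $N(v)$ with the minimal multiplicity making $P(v)\cdot C=0$; then check that the resulting $P(v)$ is nef by testing it against every curve in the support of $N(v)$. The transition at $v=1$ reflects that $B_3$ (a $(-1)$-curve meeting $A$) becomes absorbed, and the transition at $v=2$ reflects that $C_2$ (a $(-1)$-curve meeting $B_2$) becomes absorbed; continuity at $v=2$ is built in since $\tfrac{v}{2}B_2$ and $(v-1)B_2$ agree there. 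From the formulas one then reads off $(P(v))^2$ and $P(v)\cdot A$ by direct intersection (using $A^2=-2$, $-K_S\cdot A=0$, $A\cdot B_1=A\cdot B_2=A\cdot B_3=1$, $A\cdot C_1=A\cdot C_2=0$), giving $S_S(A)=\tfrac{1}{3}\int_0^3(P(v))^2\,dv=\tfrac{1}{3}\cdot\tfrac{13}{3}=\tfrac{13}{9}$ and hence the upper bound $\delta_P(S)\leqslant A_S(A)/S_S(A)=\tfrac{9}{13}$ for every $P\in A$.

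For the matching lower bound \eqref{estimation1} with respect to the flag $A\subset S$, since $1/S_S(A)=\tfrac{9}{13}$ it suffices to show $S(W^A_{\bullet,\bullet};P)\leqslant \tfrac{13}{9}$ for each $P\in A$. I would compute $h(v)=(P(v)\cdot A)\cdot(N(v)\cdot A)_P+\tfrac{1}{2}(P(v)\cdot A)^2$ in the four cases: generic $P\in A$ (where $(N(v)\cdot A)_P=0$); $P=A\cap B_1$ (where $(N(v)\cdot A)_P=\tfrac{2v}{3}$ throughout); $P=A\cap B_2$ (where $(N(v)\cdot A)_P=\tfrac{v}{2}$ on $[0,2]$ and $v-1$ on $[2,3]$); and $P=A\cap B_3$ (where $(N(v)\cdot A)_P=0$ on $[0,1]$ and $v-1$ on $[1,3]$). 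A short computation in each case (the worst bound arising at $P\in A\cap B_1$, giving $S(W^A_{\bullet,\bullet};P)=\tfrac{23}{18}$) confirms $S(W^A_{\bullet,\bullet};P)\leqslant\tfrac{23}{18}<\tfrac{13}{9}$, so the minimum in \eqref{estimation1} is achieved by the $1/S_S(A)$ term and we conclude $\delta_P(S)=\tfrac{9}{13}$.

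The main obstacle is bookkeeping rather than any conceptual difficulty: one must correctly track which curves enter $N(v)$ at each of the breakpoints $v=1$ and $v=2$, verify the continuity of $N(v)$ at $v=2$ where two distinct formulas for the coefficient of $B_2$ meet, and confirm case by case that even at the special points $A\cap B_i$ the quantity $S(W^A_{\bullet,\bullet};P)$ stays strictly below $\tfrac{13}{9}$ --- this last verification is what allows the conclusion to be stated for all $P\in A$ with no exceptional locus removed, in contrast to several of the preceding lemmas.
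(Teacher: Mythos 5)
Your proposal follows essentially the same route as the paper: verify the piecewise Zariski decomposition, compute $S_S(A)=\tfrac{1}{3}\int_0^3(P(v))^2\,dv=\tfrac{13}{9}$ for the upper bound, and bound $S(W^A_{\bullet,\bullet};P)$ case by case according to whether $P$ lies on $B_1$, $B_2$, $B_3$ or none of them, with the worst value $\tfrac{23}{18}<\tfrac{13}{9}$ occurring at $A\cap B_1$ --- exactly the computation in the paper --- and you are right that the stated $\tau(A)=2$ is a typo for $\tau(A)=3$. The one error is the $\mathbb{R}$-linear equivalence you offer to certify $\tau(A)=3$: the divisor $\tfrac{1}{3}(4B_1+2C_1+3B_2+3B_3+C_2)$ has the wrong coefficients (for instance, pairing your right-hand side with $A$ gives $-\tfrac{8}{3}$ rather than $-K_S\cdot A=0$, and it is inconsistent with $N(3)$); the correct certificate, matching $N(3)$, is $-K_S-vA\sim_{\mathbb{R}}(3-v)A+2B_1+C_1+2B_2+C_2+2B_3$. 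Since your stated procedure (adding curves to $N(v)$ with minimal multiplicities and testing nefness) would produce the correct decomposition regardless, this is a computational slip rather than a gap in the argument.
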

\begin{proof}
The Zariski Decomposition follows from $-K_S-vA\sim_{\DR} (3-v)A+2B_1+C_1+2B_2+C_2+2B_3$. We have  $S_S(A)=\frac{13}{9}$.
Thus, $\delta_P(S)\le \frac{9}{13}$ for $P\in A$. Moreover, for $P\in A$ we have if $P\in A\backslash (B_1\cup B_2)$ or if $P\in  A\backslash (B_1\cup B_3)$ or if $P\in  A\backslash (B_2\cup B_3)$:
$$h(v) =\begin{cases}
    \frac{25 v^2}{72}\text{ if }v\in[0,1],\\
    \frac{(6 - v) (11 v - 6)}{72}\text{ if }v\in[1,2],\\
    \frac{4 v (3 - v)}{9}\text{ if }v\in[2,3].\end{cases}
    \text{ or }
    h(v) =\begin{cases}
     \frac{55 v^2}{72}\text{ if }v\in[0,1],\\
    \frac{(6 - v) (5 v + 6)}{72}\text{ if }v\in[1,2],\\
    \frac{4 v (3 - v)}{9}\text{ if }v\in[2,3].\end{cases}
   $$$$ \text{ or }
    h(v) =\begin{cases}
     \frac{65 v^2}{72}\text{ if }v\in[0,1],\\
    \frac{(6 - v) (7 v + 6)}{72}\text{ if }v\in[1,2],\\
    \frac{2 (3 - v) (v + 3)}{9}\text{ if }v\in[2,3].\end{cases}$$
So $S(W_{\bullet,\bullet}^{A};P) \le \frac{23}{27}<\frac{13}{9}$ or $S(W_{\bullet,\bullet}^{A};P) \le \frac{29}{27}<\frac{13}{9}$ or   $S(W_{\bullet,\bullet}^{A};P) \le \frac{23}{18}<\frac{13}{9}$.
Thus, $\delta_P(S)=\frac{9}{13}$ if $P\in A$.
\end{proof}
\begin{lemma}\label{deg3-23_middleA5points}
Suppose $P$ belongs to a $(-2)$-curve $A$ and there exist $(-1)$-curves and $(-2)$-curves   which form the following dual graph:
\begin{figure}[h!]
    \centering
\includegraphics[width=6cm]{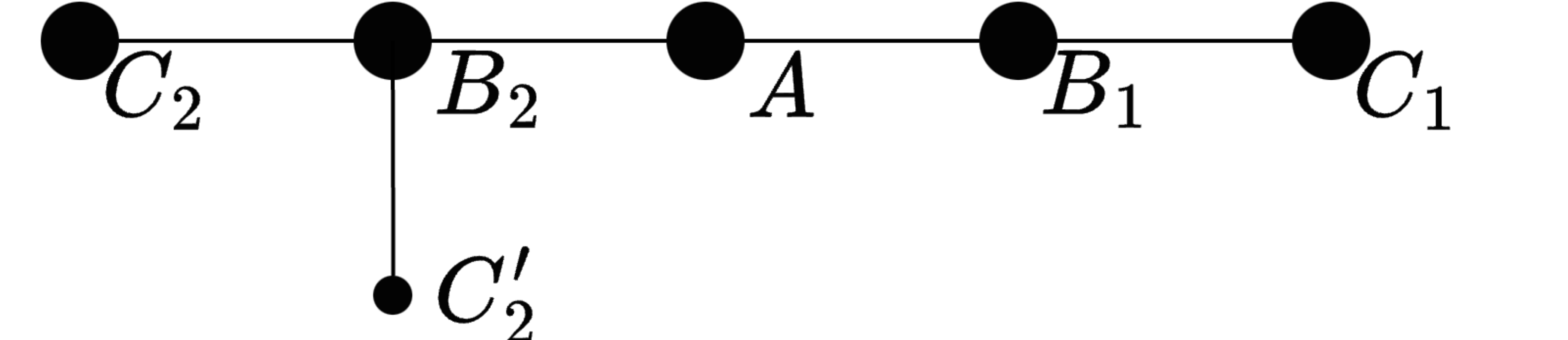}
    \caption{Dual graph: $(-K_S)^2=3$ and $\delta_P(S)=\frac{2}{3}$ with $-K_S-vA$ nef on $\big[0,\frac{3}{2}\big]$}
\end{figure}
\par Then  $\tau(A)=3$ and the Zariski Decomposition of the divisor $-K_S-vA$ is given by:
{  
{\allowdisplaybreaks\begin{align*}
&&P(v)=\begin{cases}
-K_S-vA-\frac{v}{3}(2B_1+C_1+C_2+2B_2)\text{ if }v\in\big[0,\frac{3}{2}\big],\\
-K_S-vA-\frac{v}{3}(2B_1+C_1)-(v-1)(C_2+2B_2)-(2v - 3)C_2'\text{ if }v\in\big[\frac{3}{2},3\big].
\end{cases}\\&&N(v)=\begin{cases}\frac{v}{3}(2B_1+C_1+C_2+2B_2)\text{ if }v\in[0,\frac{3}{2}\big],\\
\frac{v}{3}(2B_1+C_1)+(v-1)(C_2+2B_2)+(2v - 3)C_2'\text{ if }v\in\big[\frac{3}{2},3\big].
\end{cases}
\end{align*}}
}
Moreover, 
$$(P(v))^2=\begin{cases}
3-\frac{2v^2}{3}\text{ if }v\in\big[0,\frac{3}{2}\big],\\
\frac{2(3-v)^2}{3}\text{ if }v\in\big[\frac{3}{2},3\big].
\end{cases}P(v)\cdot A=\begin{cases}\frac{2v}{3}\text{ if }v\in\big[0,\frac{3}{2}\big],\\
2(1-\frac{v}{3})\text{ if }v\in\big[\frac{3}{2},3\big].
\end{cases}$$
In this case $\delta_P(S)=\frac{2}{3}\text{ if }P\in A\backslash B_2$.
\end{lemma}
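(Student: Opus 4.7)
The plan is to follow the same template used for the related middle-$\mathbb{A}_n$ lemmas above (in particular Lemmas~\ref{deg3-913middleA4points} and~\ref{deg3-A3middlepoints}): realise the Zariski decomposition from a single linear equivalence, extract $S_S(A)$ to produce the upper bound $\delta_P(S)\le 1/S_S(A)$, and then feed a pointwise bound on $h(v)$ into~\eqref{estimation1} to match it.

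First I would establish $-K_S\sim_{\DR}3A+(2B_1+C_1)+(2C_2+4B_2)+3C_2'$ directly from the dual-graph data: $A$ is the middle $(-2)$-curve of the $\mathbb{A}_5$ chain $C_1-B_1-A-B_2-C_2$ and $C_2'$ is a $(-1)$-curve meeting $B_2$. On $[0,3/2]$ the two $\mathbb{A}_2$ sub-chains contribute symmetrically to $N(v)$ at rate $v/3$; at $v=3/2$ the divisor $C_2'$ becomes non-nef and enters $N(v)$, with the coefficients of $C_2$ and $B_2$ shifting to $(v-1)$ and $2(v-1)$ so as to preserve $P(v)\cdot E=0$ for $E\in\{C_2,B_2,C_2'\}$. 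These verifications are routine and produce the stated formulas for $(P(v))^2$ and $P(v)\cdot A$.

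A piecewise integration of $(P(v))^2$ gives $S_S(A)=\tfrac{3}{2}$, so $\delta_P(S)\le\tfrac{2}{3}$ for every $P\in A$. For the reverse inequality I would bound $h(v)$ at each $P\in A\setminus B_2$ by the local contribution of $N(v)$ along $A$. At a generic point the term $(N(v)\cdot A)_P$ vanishes and $h(v)=\tfrac{1}{2}(P(v)\cdot A)^2$. The worst admissible case is $P=A\cap B_1$, where $B_1$ contributes its coefficient $2v/3$ throughout $[0,3]$; a direct integration should yield $S(W^A_{\bullet,\bullet};P)=\tfrac{4}{3}<\tfrac{3}{2}=S_S(A)$, and~\eqref{estimation1} then gives $\delta_P(S)\ge\tfrac{2}{3}$.

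The main obstacle is the second phase $v\in[3/2,3]$: one must simultaneously check effectivity of $N(v)$, nefness of $P(v)$ against all components, and continuity at $v=3/2$. The reason for the exclusion of $B_2$ is the asymmetry introduced by $C_2'$: on $[3/2,3]$ the coefficient of $B_2$ in $N(v)$ becomes $2(v-1)>2v/3$, so at $P=A\cap B_2$ the above bound on $h(v)$ fails and $S(W^A_{\bullet,\bullet};P)>S_S(A)$, meaning the estimate $\delta_P(S)\ge\tfrac{2}{3}$ must be obtained by a different route (for instance via the weighted blowup estimate~\eqref{estimation2}) and is intentionally not part of this lemma.
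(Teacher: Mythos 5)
Your proposal is correct and follows essentially the same route as the paper: the identity $-K_S\sim_{\DR}3A+2B_1+C_1+4B_2+2C_2+3C_2'$, the computation $S_S(A)=\tfrac{3}{2}$ giving the upper bound $\delta_P(S)\le\tfrac{2}{3}$, and the bound on $h(v)$ whose worst admissible case $P=A\cap B_1$ yields $S(W^A_{\bullet,\bullet};P)\le\tfrac{4}{3}<\tfrac{3}{2}$, matching the paper's $h(v)\le\tfrac{2v^2}{3}$ on $[0,\tfrac{3}{2}]$ and $\tfrac{2(3-v)(v+3)}{9}$ on $[\tfrac{3}{2},3]$. Your added explanation of why $A\cap B_2$ must be excluded (the coefficient of $B_2$ jumps to $2(v-1)$ on $[\tfrac32,3]$, pushing $S(W^A_{\bullet,\bullet};P)$ above $S_S(A)$) is correct but not part of the paper's argument.
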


\begin{proof}
The Zariski Decomposition  follows from $-K_S-vA\sim_{\DR} (3-v)A+2B_1+C_1+4B_2+2C_2+3C_2'$. We have $S_S(A)=\frac{3}{2}$.
Thus, $\delta_P(S)\le \frac{2}{3}$ for $P\in A$. Moreover, for $P\in A\backslash B_2$ we have:
$$h(v) \le\begin{cases}
\frac{2v^2}{3}\text{ if }v\in\big[0,\frac{3}{2}\big],\\
\frac{2(3-v) (v + 3)}{9} \text{ if }v\in\big[\frac{3}{2},3\big].
\end{cases}$$
So  
$S(W_{\bullet,\bullet}^{A};P)\le  \frac{4}{3}<\frac{3}{2}$.
Thus, $\delta_P(S)=\frac{2}{3}$ if $P\in A\backslash B_2$.
\end{proof}
\begin{lemma}\label{deg3-23_1_52_3_points}
Suppose $P$ belongs to a $(-2)$-curve $A$ and there exist $(-1)$-curves and $(-2)$-curves   which form the following dual graph:
\begin{figure}[h!]
    \centering
\includegraphics[width=6cm]{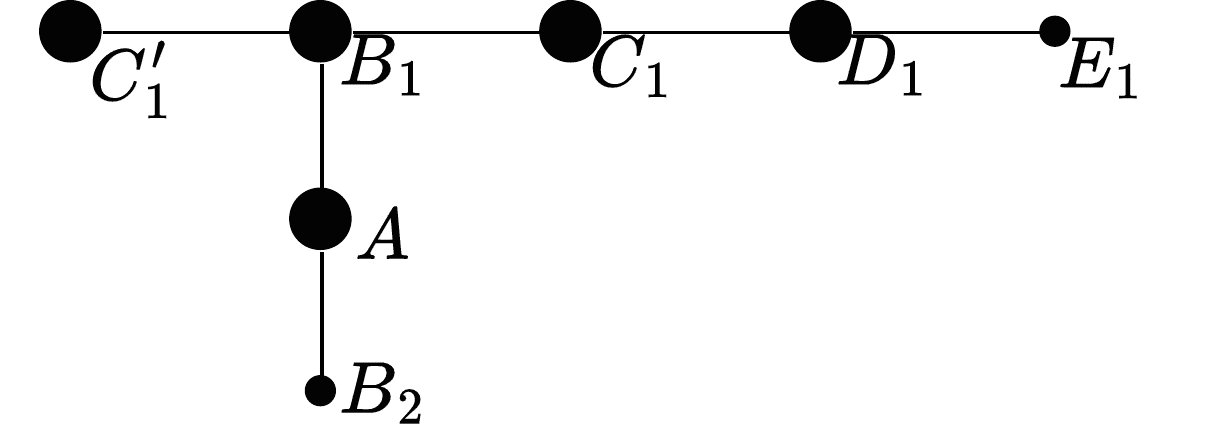}
    \caption{Dual graph: $(-K_S)^2=3$ and $\delta_P(S)=\frac{2}{3}$ with $-K_S-vA$ nef on $[0,1]$}
\end{figure}
\par Then  $\tau(A)=3$ and the Zariski Decomposition of the divisor $-K_S-vA$ is given by:
{ \small  \hspace*{-1.5cm}
{\allowdisplaybreaks\begin{align*}
\hspace*{-0.5cm}&&P(v)=\begin{cases}
-K_S-v A-\frac{v}{5}(6B_1+4C_1+2D_1+3C_1')\text{ if }v\in[0,1],\\
-K_S-v A-\frac{v}{5}(6B_1+4C_1+2D_1+3C_1')-(v-1)B_2\text{ if }v\in\big[1,\frac{5}{2}\big],\\
-K_S-v  A-(v-1)(C_1'+B_2)-(2v-2)B_1-(2v-3)C_1-(2v-4)D_1-(2v - 5)E_1\text{ if }v\in\big[\frac{5}{2},3\big].
\end{cases}\\
\hspace*{-0.5cm}&&N(v)=\begin{cases}\frac{v}{5}(6B_1+4C_1+2D_1+3C_1')\text{ if }v\in[0,1\big],\\
\frac{v}{5}(6B_1+4C_1+2D_1+3C_1')+(v-1)B_2\text{ if }v\in[1,\frac{5}{2}\big],\\
(v-1)(C_1'+B_2)+(2v-2)B_1+(2v-3)C_1+(2v-4)D_1+(2v - 5)E_1\text{ if }v\in\big[\frac{5}{2},3\big].
\end{cases}
\end{align*}}
}
Moreover, 
$$(P(v))^2=\begin{cases}3-\frac{4v^2}{5}\text{ if }v\in[0,1],\\
\frac{v^2}{5}-2v+4\text{ if }v\in\big[1,\frac{5}{2}\big],\\
(3-v)^2\text{ if }v\in\big[\frac{5}{2},3\big].
\end{cases}P(v)\cdot A=\begin{cases}
\frac{4v}{5}\text{ if }v\in[0,1],\\
1-\frac{v}{5}\text{ if }v\in\big[1,\frac{5}{2}\big],\\
3-v\text{ if }v\in\big[\frac{5}{2},3\big].
\end{cases}$$
In this case $\delta_P(S)=\frac{2}{3}\text{ if }P\in A\backslash B_1$.
\end{lemma}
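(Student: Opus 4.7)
The plan is to follow the same four-step template used in all preceding lemmas of this section, adapted to the new dual graph.

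\textbf{Step 1 (pseudo-effective threshold and Zariski decomposition).} The first task is to verify the anchoring identity
\[
-K_S-vA\sim_{\DR}(3-v)A+6B_1+4C_1+2D_1+3C_1'+B_2
\]
by intersecting both sides with each curve in the dual graph and using that $-K_S\cdot A=0$, $-K_S\cdot B_1=-K_S\cdot C_1=-K_S\cdot D_1=-K_S\cdot E_1=-K_S\cdot B_2=0$, $-K_S\cdot C_1'=1$, together with the negative self-intersections. This immediately gives $\tau(A)=3$ and exhibits the negative parts on the three subintervals. On $[0,1]$ only the curves $B_1,C_1,D_1,C_1'$ have negative intersection with $-K_S-vA$; at $v=1$ the curve $B_2$ becomes part of the negative part; finally at $v=5/2$ the support reorganises so that the chain $B_1,C_1,D_1,E_1$ carries the full multiplicities and $C_1'$ is rescaled. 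For each subinterval I will check that $P(v)\cdot C=0$ for every component $C$ of the proposed negative part and that $P(v)$ has nonnegative intersection with every remaining curve in the graph; this is a mechanical linear-algebra check on a small Gram matrix.

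\textbf{Step 2 (computing $S_S(A)$).} Using the stated formulas for $(P(v))^2$ I will compute
\[
S_S(A)=\frac{1}{3}\int_0^3(P(v))^2\,dv=\frac{1}{3}\!\left(\int_0^1\!\Big(3-\tfrac{4v^2}{5}\Big)dv+\int_1^{5/2}\!\Big(\tfrac{v^2}{5}-2v+4\Big)dv+\int_{5/2}^3(3-v)^2\,dv\right)=\frac{3}{2},
\]
which gives the upper bound $\delta_P(S)\leqslant\frac{2}{3}$.

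\textbf{Step 3 (local Abban–Zhuang estimate).} For $P\in A\setminus B_1$ the only component of $N(v)$ that meets $A$ at $P$ is at most one of the curves $C_1',B_2$, each of which meets $A$ transversely and contributes coefficient $\frac{3v}{5}$ on $[0,1]$, at most $v-1$ on $[1,5/2]$ and at most $v-1$ on $[5/2,3]$. Inserting these multiplicities together with the formulas for $P(v)\cdot A$ into
\[
h(v)=(P(v)\cdot A)\cdot(N(v)\cdot A)_P+\frac{(P(v)\cdot A)^2}{2}
\]
yields an explicit piecewise-polynomial upper bound. Integrating, I expect
\[
S(W^A_{\bullet,\bullet};P)=\frac{2}{3}\int_0^3 h(v)\,dv\leqslant\frac{4}{3}<\frac{3}{2}=S_S(A),
\]
so the inequality \eqref{estimation1} gives $\delta_P(S)\geqslant\min\{1/S_S(A),1/S(W^A_{\bullet,\bullet};P)\}=\frac{2}{3}$, and combined with Step~2 this proves equality.

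\textbf{Anticipated difficulty.} The Zariski-decomposition bookkeeping on the third interval $[5/2,3]$ is the main place where mistakes can creep in: the negative part there is no longer a simple rescaling of the one on $[1,5/2]$ because the multiplicity on $C_1'$ changes its slope, and one must check that the proposed $P(v)$ is genuinely orthogonal to \emph{all} five curves $B_1,C_1,D_1,E_1,C_1',B_2$ simultaneously. This is the step I would do most carefully; once the decomposition is correct, Steps~2 and~3 are short calculations analogous to those in Lemma~\ref{deg3-23_middleA5points}.
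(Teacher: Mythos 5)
Your template is the same one the paper uses (anchoring identity, then $S_S(A)$, then the $h(v)$ bound), and Steps 2 and 3 would go through, but the key identity you propose in Step 1 is wrong. The correct identity, which is what the paper uses, is
$$-K_S-vA\sim_{\DR}(3-v)A+4B_1+3C_1+2D_1+E_1+2C_1'+2B_2,$$
i.e.\ the fixed effective divisor must be $-K_S-3A=N(3)$, whose coefficients you can read off from the lemma's own formula for $N(v)$ on $\big[\tfrac{5}{2},3\big]$ at $v=3$. Your candidate $6B_1+4C_1+2D_1+3C_1'+B_2$ fails the very test you propose: from $P(v)\cdot A=\tfrac{4v}{5}$ on $[0,1]$ and $P(v)\cdot A=1-\tfrac{v}{5}$ on $\big[1,\tfrac52\big]$ one is forced to have $B_1\cdot A=B_2\cdot A=1$ and $C_1\cdot A=D_1\cdot A=C_1'\cdot A=0$, so intersecting your right-hand side with $A$ gives $7$ while $(-K_S-3A)\cdot A=6$; moreover your divisor omits $E_1$ entirely, even though $E_1$ appears in $N(v)$ with positive coefficient $2v-5$ for $v>\tfrac52$ and hence must lie in the support of $-K_S-3A$. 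As written, Step 1 would collapse when you attempt the verification.

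A secondary inaccuracy: since $C_1'\cdot A=0$, the curve $C_1'$ never contributes to $(N(v)\cdot A)_P$, so for $P\in A\setminus B_1$ the only possible contribution comes from $B_2$, which is $0$ on $[0,1]$ and $v-1$ afterwards; on $[0,1]$ one should simply take $h(v)=\tfrac{(P(v)\cdot A)^2}{2}=\tfrac{8v^2}{25}$ rather than your $\tfrac{4v^2}{5}$. This does not sink the argument — even with your overestimate one gets $S(W^A_{\bullet,\bullet};P)=\tfrac{224}{225}<\tfrac32$ (the paper's sharper bound gives $\tfrac{8}{9}$) — but the multiplicity bookkeeping should be corrected along with the anchoring identity.
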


\begin{proof}
The Zariski Decomposition  follows from $-K_S-vA\sim_{\DR} (3-v)A+2C_1'+4B_1+3C_1+2D_1+E_1+2B_2$.
We have
$S_S(A)=\frac{3}{2}$. Thus, $\delta_P(S)\le \frac{2}{3}$ for $P\in A$. Moreover, for $P\in A\backslash B_1$ we have:
$$h(v) \le\begin{cases}
\frac{16v^2}{50}\text{ if }v\in[0,1],\\
\frac{(5 - v) (9 v - 5)}{50} \text{ if }v\in\big[1,\frac{5}{2}\big],\\
\frac{(3 - v) (v + 1)}{2}\text{ if }v\in\big[\frac{5}{2},3\big].
\end{cases}$$
So  
$S(W_{\bullet,\bullet}^{A};P) \le \frac{8}{9}<\frac{3}{2}$.
Thus, $\delta_P(S)=\frac{2}{3}$ if $P\in A\backslash B_1$.
\end{proof}
\begin{lemma}\label{deg3-35_middleA5points}
\label{deg3-35-1-4-points}
Suppose $P$ belongs to a $(-2)$-curve $A$ and there exist $(-1)$-curves and $(-2)$-curves   which form the following dual graph:
\begin{figure}[h!]
    \centering
\includegraphics[width=14cm]{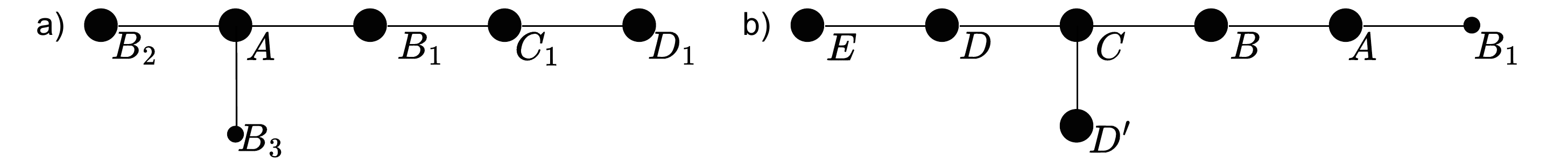}
    \caption{Dual graph: $(-K_S)^2=3$ and $\delta_P(S)=\frac{3}{5}$ with  $\tau(A)=4$}
\end{figure}
\par  Then  $\tau(A)=4$ and the Zariski Decomposition of the divisor $-K_S-vA$ is given by:
   {\allowdisplaybreaks\begin{align*}
    &{\text{\bf a). }}&P(v)=\begin{cases}-K_S-vA-\frac{v}{4}(2B_2+3B_1+2C_1+D_1)\text{ if }v\in[0,1],\\
-K_S-vA-\frac{v}{4}(2B_2+3B_1+2C_1+D_1)-(v-1)B_3\text{ if }v\in[1,4].
\end{cases}\\&&N(v)=\begin{cases}
\frac{v}{4}(2B_2+3B_1+2C_1+D_1)\text{ if }v\in[0,1],\\
\frac{v}{4}(2B_2+3B_1+2C_1+D_1)+(v-1)B_3\text{ if }v\in[1,4].
\end{cases}\\
 &{\text{\bf b). }}&P(v)=\begin{cases}-K_S-vA-\frac{v}{4}(5B+6C+4D+2E+3D')\text{ if }v\in[0,1],\\
-K_S-vA-\frac{v}{4}(5B+6C+4D+2E+3D')-(v-1)B_1\text{ if }v\in[1,4].
\end{cases}\\&&N(v)=\begin{cases}
\frac{v}{4}(5B+6C+4D+2E+3D')\text{ if }v\in[0,1],\\
\frac{v}{4}(5B+6C+4D+2E+3D')+(v-1)B_1\text{ if }v\in[1,4].
\end{cases}\end{align*}}
Moreover, 
$$(P(v))^2=\begin{cases}\frac{3(2-v)(2+v)}{4}\text{ if }v\in[0,1],\\
\frac{(4-v)^2}{4}\text{ if }v\in[1,4].
\end{cases}P(v)\cdot A=\begin{cases}\frac{3v}{4}\text{ if }v\in[0,1],\\
1-\frac{v}{4}\text{ if }v\in[1,4].
\end{cases}$$
In this case $\delta_P(S)=\frac{3}{5}\text{ if }P\in A\backslash B$.
\end{lemma}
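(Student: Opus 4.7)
The proof follows the template of the previous lemmas in this section. First I would establish the Zariski decomposition in part (a) by producing an explicit $\mathbb{R}$-linear equivalence
\[
-K_S - vA \sim_{\mathbb{R}} (4-v)A + 2B_2 + 3B_1 + 2C_1 + D_1 + 3B_3,
\]
which pins down $\tau(A)=4$ and identifies the support of the candidate $N(v)$. The breakpoint $v=1$ arises because the coefficient of $B_3$ in the proposed $N(v)$ becomes positive exactly there; before $v=1$ the curve $B_3$ still sits in $P(v)$. Nefness of $P(v)$ on each of $[0,1]$ and $[1,4]$ would then be verified by intersecting with every curve in the dual graph, and orthogonality $P(v)\cdot N(v)=0$ follows from the same intersection table. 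Part (b) is handled identically, from the analogous relation $-K_S \sim_{\mathbb{R}} 4A + 5B + 6C + 4D + 2E + 3D' + 3B_1$.

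Next I would read off $(P(v))^2$ and $P(v)\cdot A$ from the Zariski decompositions using $A^2=-2$ and $A\cdot K_S = 0$, matching the stated piecewise formulas. A quick consistency check is that both pieces of $(P(v))^2$ agree at $v=1$ (value $\tfrac{9}{4}$) and the second piece vanishes at $v=4$, confirming $\tau(A)=4$. Integrating,
\[
S_S(A) \;=\; \frac{1}{3}\int_0^4 (P(v))^2\, dv \;=\; \frac{1}{3}\left(\int_0^1 \frac{3(4-v^2)}{4}\, dv + \int_1^4 \frac{(4-v)^2}{4}\, dv\right) \;=\; \frac{1}{3}\left(\frac{11}{4}+\frac{9}{4}\right) \;=\; \frac{5}{3},
\]
which gives the upper bound $\delta_P(S) \le \frac{3}{5}$ for every $P\in A$.

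For the matching lower bound I would apply estimate \eqref{estimation1}. When $P\in A\setminus B$, the point $P$ avoids the singular support of $N(v)|_A$, so $(N(v)\cdot A)_P$ is governed solely by the coefficient of the unique neighboring component that meets $A$ at $P$ (the $(-1)$-curve $B_3$ in case (a), the analogous curve in (b)); substituting into
\[
h(v) \;=\; \bigl(P(v)\cdot A\bigr)\bigl(N(v)\cdot A\bigr)_P + \frac{\bigl(P(v)\cdot A\bigr)^2}{2}
\]
gives an explicit piecewise-polynomial bound that integrates to $S(W^A_{\bullet,\bullet};P) \le \frac{5}{3}$, matching $S_S(A)$. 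Combined with the upper bound, this yields $\delta_P(S) = \frac{3}{5}$.

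The main obstacle I anticipate is the nefness verification on the long interval $[1,4]$: the coefficients in $N(v)$ grow until $v=4$, and one must rule out that any of the remaining $(-1)$- or $(-2)$-curves away from the contracted chain becomes negatively intersected by $P(v)$ at the right endpoint. Once nefness is confirmed and the numerical data match the stated formulas, the two integrations producing $S_S(A)$ and the $h(v)$-bound are routine.
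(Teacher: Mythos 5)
Your proposal is correct and follows essentially the same route as the paper: the same $\mathbb{R}$-linear equivalence $-K_S-vA\sim_{\mathbb{R}}(4-v)A+2B_2+3B_1+2C_1+D_1+3B_3$ to pin down the decomposition and $\tau(A)=4$, the same computation $S_S(A)=\tfrac{1}{3}(\tfrac{11}{4}+\tfrac{9}{4})=\tfrac{5}{3}$ for the upper bound, and the same use of the bound~\eqref{estimation1} via an explicit piecewise estimate of $h(v)$ integrating to at most $\tfrac{5}{3}$. The only difference is that the paper records the explicit $h(v)$ bounds (yielding $S(W^A_{\bullet,\bullet};P)\le\tfrac{3}{2}$ or $\le 1$, both below the needed threshold $\tfrac{5}{3}$), which your plan would reproduce once carried out.
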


\begin{proof}
The Zariski Decomposition in part a). follows from $-K_S-vA\sim_{\DR} (4-v)A+2B_2+3B_1+2C_1+D_1+3B_3$. A similar statement holds in other parts.
We have $S_S(A)=\frac{5}{3}$.
Thus, $\delta_P(S)\le \frac{3}{5}$ for $P\in A$. Moreover, for $P\in A$ we have:
$$
h(v) \le\begin{cases}
\frac{27 v^2}{32}\text{ if }v\in[0,1],\\
\frac{(4 - v) (5 v + 4)}{32}\text{ if }v\in[1,4].\end{cases}
    \text{ or }
h(v) \le\begin{cases}
\frac{9 v^2}{32}\text{ if }v\in[0,1],\\
\frac{(4 - v) (7 v - 4)}{32}\text{ if }v\in[1,4].\end{cases}$$
So 
$S(W_{\bullet,\bullet}^{A};P) \le \frac{3}{2}<\frac{5}{3}$ or 
$S(W_{\bullet,\bullet}^{A};P)\le 1<\frac{5}{3}$.
Thus, $\delta_P(S)=\frac{3}{5}$ if $P\in A$.
\end{proof}
\begin{lemma}\label{deg3-35_2_3_points}
Suppose $P$ belongs to a $(-2)$-curve $A$ and there exist $(-1)$-curves and $(-2)$-curves   which form the following dual graph:
\begin{figure}[h!]
    \centering
\includegraphics[width=14cm]{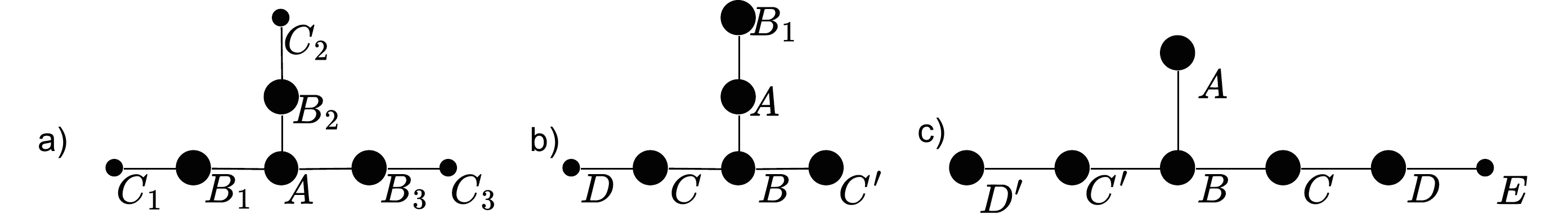}
    \caption{Dual graph: $(-K_S)^2=3$ and $\delta_P(S)=\frac{3}{5}$ with  $\tau(A)=3$}
\end{figure}
\par Then  $\tau(A)=3$ and the Zariski Decomposition of the divisor $-K_S-vA$ is given by:
{\small\allowdisplaybreaks\begin{align*}
&{\text{\bf a). }}&
P(v)=\begin{cases}
-K_S-vA-\frac{v}{2}(B_1+B_2+B_3)\text{ if }v\in[0,2],\\
-K_S-v A-(v-1)(B_1+B_2+B_3)-(v-2)(C_1+C_2+C_3)\text{ if }v\in[2,3].
\end{cases}\\&&N(v)=\begin{cases}\frac{v}{2}(B_1+B_2+B_3)\text{ if }v\in[0,2\big],\\
(v-1)(B_1+B_2+B_3)+(v-2)(C_1+C_2+C_3)\text{ if }v\in[2,3].
\end{cases}\\
&{\text{\bf b). }}&
P(v)=\begin{cases}
-K_S-vA-\frac{v}{2}(B_1+2B+C+C')\text{ if }v\in[0,2],\\
-K_S-v A-(v-1)(B_1+C')-(v-2)C_1-(2v-2)B-(2v-3)C-(2v-4)D\text{ if }v\in[2,3].
\end{cases}\\&&N(v)=\begin{cases}\frac{v}{2}(B_1+2B+C+C')\text{ if }v\in[0,2\big],\\
(v-1)(B_1+C')+(v-2)C_1+(2v-2)B+(2v-3)C+(2v-4)D\text{ if }v\in[2,3].
\end{cases}\\
&{\text{\bf c). }}&
P(v)=\begin{cases}
-K_S-vA-\frac{v}{2}(D'+2C'+3B+2C+D)\text{ if }v\in[0,2],\\
-K_S-v A-(v-1)(D'+2C'+B)-(3v-4)C-(3v-5)D-(3v-6)E\text{ if }v\in[2,3].
\end{cases}\\&&N(v)=\begin{cases}
\frac{v}{2}(D'+2C'+3B+2C+D)\text{ if }v\in[0,2\big],\\
(v-1)(D'+2C'+B)+(3v-4)C+(3v-5)D+(3v-6)E\text{ if }v\in[2,3].
\end{cases}
\end{align*}}
Moreover, 
$$(P(v))^2=\begin{cases}
3-\frac{v^2}{2} \text{ if }v\in[0,2],\\
(3-v)^2\text{ if }v\in[2,3].
\end{cases}P(v)\cdot A=\begin{cases}
\frac{v}{2}\text{ if }v\in[0,2],\\
3-v\text{ if }v\in[2,3].
\end{cases}$$
In this case $\delta_P(S)=\frac{3}{5}\text{ if }P\in A\backslash B$.
\end{lemma}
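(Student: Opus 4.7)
The plan is to follow the template established by the preceding lemmas in this section. I would first verify each of the three stated Zariski decompositions, then compute $S_S(A)$ to obtain the upper bound on $\delta_P(S)$, and finally apply the Abban--Zhuang estimate (\ref{estimation1}) with the flag $(A,P)$ to obtain the matching lower bound for $P \in A \setminus B$.

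For each of cases (a), (b), (c), the stated Zariski decomposition should follow from a linear equivalence of the form $-K_S - vA \sim_{\mathbb{R}} (3-v)A + \Delta$, where $\Delta$ is an effective $\mathbb{R}$-divisor supported on the $(-1)$-curves and chains of $(-2)$-curves attached to $A$ in the dual graph, with coefficients dictated by orthogonality of $P(v)$ to the support of $N(v)$. I would check nefness of each $P(v)$ by computing $P(v)\cdot C \ge 0$ for every irreducible curve $C$ in the dual graph, with the chain structure dictating the appropriate sign on each subinterval and at the breakpoint $v=2$. Integrating $(P(v))^2$ then gives
\[
S_S(A) = \frac{1}{3}\left(\int_0^2 \Bigl(3 - \frac{v^2}{2}\Bigr)\,dv + \int_2^3 (3-v)^2\, dv\right) = \frac{1}{3}\left(\frac{14}{3} + \frac{1}{3}\right) = \frac{5}{3},
\]
and since $A$ is a $(-2)$-curve with $A_S(A)=1$, we immediately get $\delta_P(S) \le \frac{3}{5}$ for every $P \in A$.

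For the matching lower bound, writing $h(v) = (P(v)\cdot A)(N(v)\cdot A)_P + \tfrac{1}{2}(P(v)\cdot A)^2$, the local intersection $(N(v)\cdot A)_P$ is controlled by the components of $N(v)$ passing through $P$. The hypothesis $P \notin B$ excludes the unique node on $A$ where $(N(v)\cdot A)_P$ would attain its maximum, leaving at most a contribution from one of the other chain-ends meeting $A$ at $P$. Combined with the stated piecewise-linear formula for $P(v)\cdot A$, this produces a piecewise quadratic upper bound for $h(v)$ on $[0,2]$ and on $[2,3]$, and the computation $\tfrac{2}{3}\int_0^3 h(v)\,dv \le \tfrac{5}{3}$ would give $S(W^A_{\bullet,\bullet};P) \le S_S(A)$, hence $\delta_P(S) \ge \frac{3}{5}$ from (\ref{estimation1}).

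I expect the main obstacle to be case (c): on $[2,3]$ the Zariski decomposition rearranges so that the coefficients of $C$, $D$, $E$ in $N(v)$ grow at the higher rate $3v$, reflecting that $A$ sits near the extremal ray generated by the long chain. This requires careful verification of nefness of $P(v)$ on $[2,3]$ and of the orthogonality relations $P(v)\cdot B = P(v)\cdot C' = P(v)\cdot D' = 0$. In addition, when $P$ lies at an intersection of $A$ with one of the adjacent $(-2)$-curves (but not $B$), the estimate for $h(v)$ may need to be split into a few subcases corresponding to the different chain-ends. Fortunately the gap between the target bound $\frac{5}{3}$ and the naive estimate is substantial, so the standard argument from the preceding lemmas should close every subcase without sharper input.
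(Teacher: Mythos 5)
Your proposal is correct and follows essentially the same route as the paper: verify the $\mathbb{R}$-linear equivalence $-K_S-vA\sim_{\mathbb{R}}(3-v)A+\Delta$ underlying each Zariski decomposition, integrate $(P(v))^2$ to get $S_S(A)=\tfrac{5}{3}$ and hence the upper bound, and then bound $h(v)$ for $P\in A\setminus B$ (the paper obtains $h(v)\le\tfrac{3v^2}{8}$ on $[0,2]$ and $\tfrac{(3-v)(v+1)}{2}$ on $[2,3]$, giving $S(W^A_{\bullet,\bullet};P)\le\tfrac{11}{9}<\tfrac{5}{3}$). Your computation of $S_S(A)$ and the overall structure match the paper's proof exactly.
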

\begin{proof}
The Zariski Decomposition in part a). follows from $-K_S-vA\sim_{\DR} (3-v)A+2B_1+C_1+2B_2+C_2+2B_3+C_3$. A similar statement holds in other parts.
We have $S_S(A)=\frac{5}{3}$.
Thus, $\delta_P(S)\le \frac{3}{5}$ for $P\in A$. Moreover, for $P\in A\backslash B$ we have:
$$h(v) \le\begin{cases}
\frac{3v^2}{8} \text{ if }v\in[0,2],\\
\frac{(3 - v) (v + 1)}{2}\text{ if }v\in[2,3].
\end{cases}$$
So  
$S(W_{\bullet,\bullet}^{A};P)\le \frac{11}{9}<\frac{5}{3}$.
Thus, $\delta_P(S)=\frac{3}{5}$ if $P\in A\backslash B$.
\end{proof}
\begin{lemma}\label{deg3-919_2_3_4_points}
Suppose $P$ belongs to a $(-2)$-curve $A$ and there exist $(-1)$-curves and $(-2)$-curves   which form the following dual graph:
\begin{figure}[h!]
    \centering
\includegraphics[width=6cm]{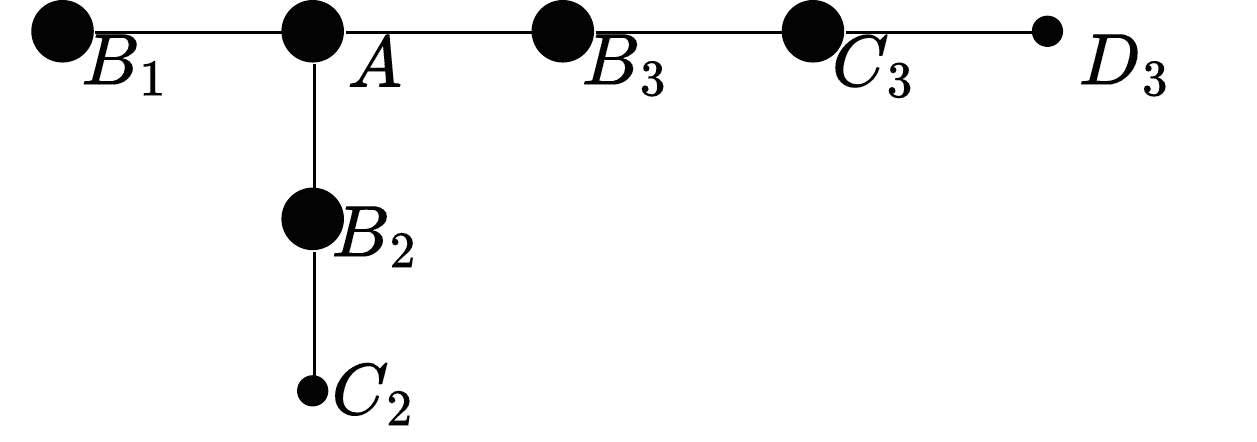}
    \caption{Dual graph: $(-K_S)^2=3$ and $\delta_P(S)=\frac{9}{19}$}
\end{figure}
\par Then  $\tau(A)=4$ and the Zariski Decomposition of the divisor $-K_S-vA$ is given by:
{   
{\allowdisplaybreaks\begin{align*}
&&P(v)=\begin{cases}
-K_S-vA-\frac{v}{2}(B_1+B_2)-\frac{v}{3}(2B_3+C_3)\text{ if }v\in[0,2],\\
-K_S-vA-\frac{v}{2}B_1-\frac{v}{3}(2B_3+C_3)-(v-1)B_2-(v-2)C_2\text{ if }v\in[2,3],\\
-K_S-vA-\frac{v}{2}B_1-(v-1)(B_2+B_3)-(v-2)(C_2+C_3)-(v-3)D_3\text{ if }v\in[3,4].
\end{cases}\\&&N(v)=\begin{cases}\frac{v}{2}(B_1+B_2)+\frac{v}{3}(2B_3+C_3)\text{ if }v\in[0,2],\\
\frac{v}{2}B_1+\frac{v}{3}(2B_3+C_3)+(v-1)B_2+(v-2)C_2\text{ if }v\in[2,3],\\
\frac{v}{2}B_1+(v-1)(B_2+B_3)+(v-2)(C_2+C_3)+(v-3)D_3\text{ if }v\in[3,4].
\end{cases}
\end{align*}}
}
Moreover, 
$$(P(v))^2=\begin{cases}\frac{(3-v)(3+v)}{3}\text{ if }v\in[0,2],\\
\frac{v^2}{6}-2v+5\text{ if }v\in[2,3],\\
\frac{(4-v)^2}{2}\text{ if }v\in[3,4].
\end{cases}P(v)\cdot A=\begin{cases}
\frac{v}{3}\text{ if }v\in[0,2],\\
1-\frac{v}{6}\text{ if }v\in[2,3],\\
2-\frac{v}{2}\text{ if }v\in[3,4].
\end{cases}$$
In this case $\delta_P(S)=\frac{9}{19}\text{ if }P\in A$.
\end{lemma}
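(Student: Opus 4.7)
The plan is to follow the template used for Lemmas~\ref{deg3-35_middleA5points} and~\ref{deg3-35_2_3_points}. First, I will exhibit an explicit linear equivalence of the form
$$-K_S-vA\sim_{\DR}(4-v)A+D,$$
where $D$ is an effective $\DQ$-divisor supported on the curves $B_1,B_2,B_3,C_2,C_3,D_3$ appearing in the dual graph (the natural guess is a combination with $D\cdot A=2v$ at the threshold $v=4$ and $D\cdot E=0$ for every other prime exceptional curve $E$ in its support). This identifies $\tau(A)=4$ and explains the two breakpoints of the Zariski decomposition: $v=2$ (when the $B_2$-branch saturates, forcing $B_2$ itself into $N(v)$ with coefficient $v-1$ beyond the $\tfrac{v}{2}$ already contributed), and $v=3$ (when $C_2$ is absorbed, and similarly the $D_3$ tail of the long branch). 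The claimed piecewise formulas for $P(v)$ and $N(v)$ will then be verified on each of $[0,2]$, $[2,3]$, $[3,4]$ by checking that $P(v)\cdot E\geqslant 0$ for every prime divisor $E$ in the dual graph and that $N(v)$ is effective, using the intersection numbers $A^2=B_i^2=C_i^2=D_3^2=-2$.

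Next, I will compute $(P(v))^2$ and $P(v)\cdot A$ from these explicit Zariski decompositions by routine intersection-theoretic calculation; this yields the three quadratic branches stated in the lemma. Integrating $(P(v))^2$ over $[0,4]$ gives
$$S_S(A)=\frac{1}{(-K_S)^2}\int_0^4 (P(v))^2\,dv=\frac{1}{3}\left(\frac{46}{9}+\frac{19}{18}+\frac{1}{6}\right)=\frac{19}{9},$$
so $\delta_P(S)\leqslant\frac{9}{19}$ for every $P\in A$.

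For the matching lower bound I will apply inequality~(\ref{estimation1}): it suffices to prove $S\big(W^A_{\bullet,\bullet};P\big)\leqslant \frac{19}{9}$ for every $P\in A$. When $P\in A\setminus(B_1\cup B_2\cup B_3)$ the negative part $N(v)$ is disjoint from $P$, so $h(v)=\tfrac{1}{2}(P(v)\cdot A)^2$, and a direct integration of the three quadratic pieces gives a value strictly below $\frac{19}{9}$. The remaining cases are $P\in A\cap B_i$ for $i=1,2,3$, where $(N(v)\cdot A)_P$ equals the coefficient of $B_i$ in $N(v)$ at that $v$; substituting these into the definition of $h(v)$ still produces a piecewise polynomial of degree at most~$2$ whose integral against the three intervals can be bounded above by $\frac{19}{9}$.

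The main obstacle is the intersection point $P\in A\cap B_3$: the coefficient of $B_3$ in $N(v)$ changes across all three intervals ($\tfrac{v}{3}$ on $[0,2]$, $\tfrac{v}{3}$ on $[2,3]$, then $v-1$ on $[3,4]$), so $h(v)$ picks up the largest contribution here, and one must check that the integral nonetheless stays below $\frac{19}{9}$. I expect a careful piecewise bound—replacing each $h(v)$ by a concave upper envelope on the interval where it is needed, as in the companion Lemmas~\ref{deg3-913middleA4points} and~\ref{deg3-35_2_3_points}—to suffice, after which the conclusion $\delta_P(S)=\frac{9}{19}$ for all $P\in A$ follows immediately from~(\ref{estimation1}) combined with the upper bound.
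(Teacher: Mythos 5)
Your proposal follows essentially the same route as the paper: the linear equivalence $-K_S-vA\sim_{\DR}(4-v)A+2B_1+3B_2+2C_2+3B_3+2C_3+D_3$ yields the stated Zariski decomposition, the integration gives $S_S(A)=\tfrac{19}{9}$ exactly as you compute, and the lower bound is obtained from~(\ref{estimation1}) by the same case analysis on the position of $P$ relative to $B_1$, $B_2$, $B_3$. The case $P\in A\cap B_3$ that you flag does indeed close without difficulty (one gets $S(W^A_{\bullet,\bullet};P)=\tfrac{5}{3}<\tfrac{19}{9}$, which is precisely the bound the paper uses for all $P\in A\setminus(B_1\cup B_2)$), so there is no gap.
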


\begin{proof}
The Zariski Decomposition  follows from $-K_S-vA\sim_{\DR} (4-v)A+2B_1+3B_2+2C_2+3B_3+2C_3+D_3$. We have $S_S(A)=\frac{19}{9}$.
Thus, $\delta_P(S)\le \frac{9}{19}$ for $P\in A$. Moreover, if $P\in A \backslash (B_1\cup B_2)$ or if $P\in  A\cap B_2$ or if $P\in  A\cap B_1$ we have:
 $$h(v) =\begin{cases}
 \frac{5v^2}{18}\text{ if }v\in[0,2],\\
\frac{(6 - v) (7 v + 6)}{72}\text{ if }v\in[2,3],\\
\frac{ 3 (4 - v) v}{8}\text{ if }v\in[3,4].\end{cases}
    \text{ or }
    h(v) \le \begin{cases}
\frac{2v^2}{9} \text{ if }v\in[0,2],\\
\frac{(6 - v) (11 v - 6)}{72} \text{ if }v\in[2,3],\\
 \frac{ 3 (4 - v) v}{8}\text{ if }v\in[3,4].\end{cases}
    $$$$\text{ or }
    h(v) \le \begin{cases}
\frac{2v^2}{9} \text{ if }v\in[0,2],\\
  \frac{(6 - v) (5 v + 6)}{72}\text{ if }v\in[2,3],\\
\frac{(4 -v) (v + 4)}{8}\text{ if }v\in[3,4].\end{cases}$$
So  $S(W_{\bullet,\bullet}^{A};P)\le \frac{5}{3}<\frac{19}{9}$ or $S(W_{\bullet,\bullet}^{A};P)\le \frac{3}{2}<\frac{19}{9}$ or $S(W_{\bullet,\bullet}^{A};P)\le \frac{35}{27}<\frac{19}{9}$.
Thus, $\delta_P(S)=\frac{19}{9}$ if $P\in A$.
\end{proof}
\begin{lemma}\label{deg3-613_54_4_points}
Suppose $P$ belongs to a $(-2)$-curve $A$ and there exist $(-1)$-curves and $(-2)$-curves   which form the following dual graph:
\begin{figure}[h!]
    \centering
\includegraphics[width=5.3cm]{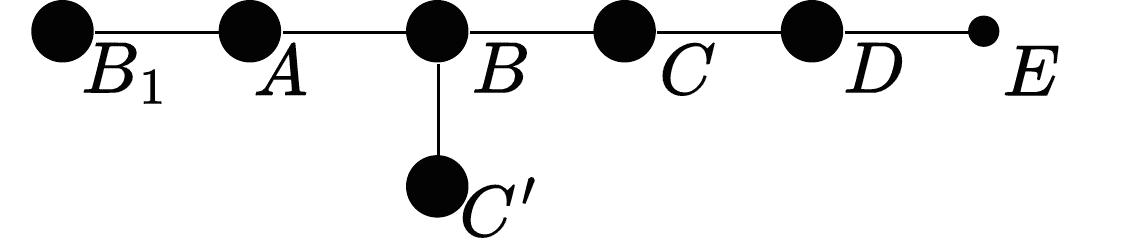}
    \caption{Dual graph: $(-K_S)^2=3$ and $\delta_P(S)=\frac{6}{13}$}
\end{figure}
\par Then  $\tau(A)=4$ and the Zariski Decomposition of the divisor $-K_S-vA$ is given by:
{ \small  
{\allowdisplaybreaks\begin{align*}
\hspace*{-0.5cm}&&P(v)=\begin{cases}
-K_S-v A-\frac{v}{5}(6B+4C+2D+3C')-\frac{v}{2}B_1\text{ if }v\in\big[0,\frac{5}{2}\big],\\
-K_S-v A-(v-1)C'-(2v-2)B-(2v-3)C-(2v-4)D-(2v - 5)E-\frac{v}{2}B_1\text{ if }v\in\big[\frac{5}{2},4\big].
\end{cases}\\
\hspace*{-0.5cm}&&N(v)=\begin{cases}
\frac{v}{5}(6B+4C+2D+3C')+\frac{v}{2}B_1\text{ if }v\in[0,\frac{5}{2}\big],\\
(v-1)C'+(2v-2)B+(2v-3)C+(2v-4)D+(2v - 5)E+\frac{v}{2}B_1\text{ if }v\in\big[\frac{5}{2},4\big].
\end{cases}
\end{align*}}
}
Moreover, 
$$(P(v))^2=\begin{cases}3-\frac{3v^2}{10}\text{ if }v\in\big[0,\frac{5}{2}\big],\\
\frac{(4-v)^2}{2}\text{ if }v\in\big[\frac{5}{2},4\big].
\end{cases}P(v)\cdot A=\begin{cases}\frac{3v}{10}\text{ if }v\in\big[0,\frac{5}{2}\big],\\
2-\frac{v}{2}\text{ if }v\in\big[\frac{5}{2},4\big].
\end{cases}$$
In this case $\delta_P(S)=\frac{6}{13}\text{ if }P\in A\backslash B$.
\end{lemma}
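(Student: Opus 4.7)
The plan is to follow the same template that has been used throughout this section for every previous lemma, starting with the Zariski decomposition and then bounding $S_S(A)$ and $S(W^A_{\bullet,\bullet};P)$ to apply estimate~\eqref{estimation1}.

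First, I would verify the Zariski decomposition by writing
$$
-K_S-vA\sim_{\mathbb{R}}(4-v)A+\tfrac{1}{2}B_1+\text{(an effective combination of $B$, $C$, $D$, $E$, $C'$)},
$$
so that $\tau(A)=4$, and then checking on the dual graph that the positive and negative parts listed in the statement really satisfy $P(v)\cdot\Gamma=0$ for every component $\Gamma$ in $\Supp N(v)$ on each of the intervals $[0,5/2]$ and $[5/2,4]$. The break at $v=5/2$ occurs exactly when $-K_S-vA-\tfrac{v}{5}(6B+4C+2D+3C')-\tfrac{v}{2}B_1$ ceases to be nef against $B$; from there the ``chain'' $B,C,D,E$ enters the negative part with the standard increasing coefficients, while $C'$ and $B_1$ are updated accordingly. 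The explicit formulas for $P(v)^2$ and $P(v)\cdot A$ are then a direct intersection computation on each subinterval, and they automatically give
$$
S_S(A)=\frac{1}{3}\int_0^{4}P(v)^2\,dv=\frac{1}{3}\left(\int_0^{5/2}\!\!\Bigl(3-\tfrac{3v^2}{10}\Bigr)\,dv+\int_{5/2}^{4}\!\!\tfrac{(4-v)^2}{2}\,dv\right)=\frac{13}{6},
$$
so $\delta_P(S)\le 6/13$ for every $P\in A$.

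Next I turn to the local estimate. For $P\in A\setminus B$, the only components of $N(v)$ that can pass through $P$ are $B_1$ (if $A\cap B_1\ni P$) and possibly $C'$ (only on the interval where it enters $N(v)$, and only at the unique intersection point $A\cap C'$). Hence the relevant term $(N(v)\cdot A)_P$ picks up at most the coefficient of $B_1$, namely $v/2$, times the local intersection number $(B_1\cdot A)_P\le 1$. Combined with $(P(v)\cdot A)$ from the table, this gives
$$
h(v)\le\begin{cases}\dfrac{3v}{10}\cdot\dfrac{v}{2}+\dfrac{1}{2}\Bigl(\dfrac{3v}{10}\Bigr)^{2}=\dfrac{3v(10+3v)}{200},&v\in[0,5/2],\\[4pt]
\Bigl(2-\dfrac{v}{2}\Bigr)\cdot\dfrac{v}{2}+\dfrac{1}{2}\Bigl(2-\dfrac{v}{2}\Bigr)^{2}=\dfrac{(4-v)(v+4)}{8},&v\in[5/2,4],\end{cases}
$$
after which $S(W^A_{\bullet,\bullet};P)=\tfrac{2}{3}\int_0^{4}h(v)\,dv$ is an elementary integral that one checks is strictly smaller than $S_S(A)=13/6$. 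Applying~\eqref{estimation1} then yields $\delta_P(S)\ge 1/S_S(A)=6/13$, matching the upper bound.

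The only real obstacle is the bookkeeping at the junction $v=5/2$: one has to verify that the chain $B,C,D,E$ enters $N(v)$ in the correct order and with the coefficients $2v-2,2v-3,2v-4,2v-5$ (so that the intersection numbers with $B,C,D,E$ all vanish), and that $C'$ acquires coefficient $v-1$ so that $P(v)\cdot C'=0$. Once this is confirmed the intersection numbers $P(v)^2$ and $P(v)\cdot A$ follow mechanically, and the bound on $h(v)$ uses only the fact that $B_1$ is the sole component of $\Supp N(v)$ outside $B$ meeting $A$, which is visible from the dual graph. Everything else is the same routine pattern as in Lemmas \ref{deg3-35_middleA5points}--\ref{deg3-919_2_3_4_points}.
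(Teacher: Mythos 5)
Your proposal follows the paper's proof essentially verbatim: verify the stated Zariski decomposition from the effective relation $-K_S-4A\sim_{\DR} 6B+5C+4D+3E+3C'+2B_1$ (so the coefficient of $B_1$ there is $2$, not $\tfrac{1}{2}$), compute $S_S(A)=\tfrac{13}{6}$, bound $h(v)$ using only the $B_1$-contribution $\tfrac{v}{2}$ to $\big(N(v)\cdot A\big)_P$, and conclude via \eqref{estimation1}. Two harmless slips worth fixing: the break at $v=\tfrac{5}{2}$ is where the $(-1)$-curve $E$ enters $\Supp N(v)$ (one has $P(v)\cdot E=1-\tfrac{2v}{5}$ on the first interval), not a failure of nefness against $B$, which already satisfies $P(v)\cdot B=0$ there; and $\tfrac{3v}{10}\cdot\tfrac{v}{2}+\tfrac{1}{2}\big(\tfrac{3v}{10}\big)^{2}=\tfrac{39v^{2}}{200}$, not $\tfrac{3v(10+3v)}{200}$, which with the second-interval bound gives $S(W^{A}_{\bullet,\bullet};P)\le\tfrac{4}{3}<\tfrac{13}{6}$ exactly as in the paper.
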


\begin{proof}
The Zariski Decomposition  follows from $-K_S-vA\sim_{\DR} (4-v)A+6B+5C+4D+3E+3C'+2B_1$. We have
$S_S(A)=\frac{13}{6}$. Thus, $\delta_P(S)\le \frac{6}{13}$ for $P\in A$. Moreover, for $P\in A\backslash B$ we have:
$$h(v) \le \begin{cases}
\frac{39v^2}{200}\text{ if } v\in\big[0,\frac{5}{2}\big],\\
 \frac{(4-v) (v + 4)}{8}\text{ if }v\in\big[\frac{5}{2},4\big].
\end{cases}$$
So 
$S(W_{\bullet,\bullet}^{A};P) \le  \frac{4}{3}<\frac{13}{6}$.
Thus, $\delta_P(S)=\frac{6}{13}$ if $P\in A\backslash B$.
\end{proof}
\begin{lemma}\label{deg3-37_2_5_points}
Suppose $P$ belongs to a $(-2)$-curve $A$ and there exist $(-1)$-curves and $(-2)$-curves   which form the following dual graph:
\begin{figure}[h!]
    \centering
\includegraphics[width=6cm]{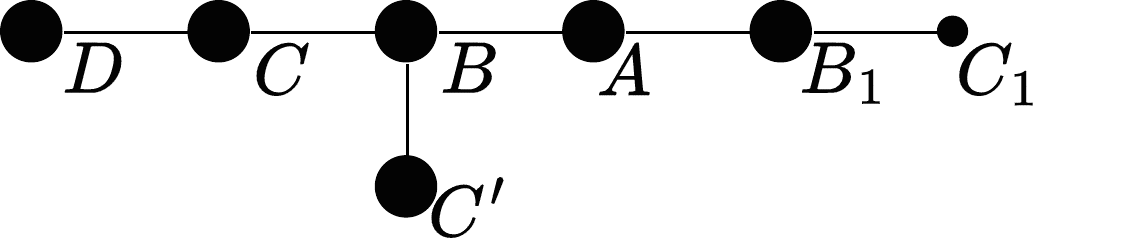}
    \caption{Dual graph: $(-K_S)^2=3$ and $\delta_P(S)=\frac{3}{7}$}
\end{figure}
\par  Then  $\tau(A)=5$ and the Zariski Decomposition of the divisor $-K_S-vA$ is given by:
{   
{\allowdisplaybreaks\begin{align*}
&&P(v)=\begin{cases}
-K_S-v A-\frac{v}{5}(6B+4C+2D+3C')-\frac{v}{2}B_1\text{ if }v\in[0,2],\\
-K_S-v A-\frac{v}{5}(6B+4C+2D+3C')-(v-1)B_1-(v-2)C_1\text{ if }v\in[2,5].
\end{cases}\\&&N(v)=\begin{cases}\frac{v}{5}(6B+4C+2D+3C')+\frac{v}{2}B_1\text{ if }v\in[0,2\big],\\
\frac{v}{5}(6B+4C+2D+3C')+(v-1)B_1+(v-2)C_1\text{ if }v\in[2,5].
\end{cases}
\end{align*}}
}
Moreover, 
$$(P(v))^2=\begin{cases}
3-\frac{3v^2}{10}\text{ if }v\in[0,2],\\
\frac{(5-v)^2}{5}\text{ if }v\in[2,5].
\end{cases}P(v)\cdot A=\begin{cases}\frac{3v}{10}\text{ if }v\in[0,2],\\
1-\frac{v}{5}\text{ if }v\in[2,5].
\end{cases}$$
In this case $\delta_P(S)=\frac{3}{7}\text{ if }P\in A\backslash B$.
\end{lemma}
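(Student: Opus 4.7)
The plan is to follow the template established by the preceding lemmas in this section: produce the $\DR$-linear equivalence exhibiting $\tau(A)=5$, read off the Zariski decomposition, compute $(P(v))^2$ and $P(v)\cdot A$, feed these into Fujita's formula for $S_S(A)$, and finally bound $S(W^{A}_{\bullet,\bullet};P)$ for $P\in A\setminus B$ to conclude by estimation~\eqref{estimation1}.

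First I would write down the $\DR$-linear equivalence
$$-K_S-vA\sim_{\DR}(5-v)A+6B+5C+4D+3E+3C'+2B_1+C_1,$$
which shows $\tau(A)\leqslant 5$ and simultaneously identifies the candidate negative part. I then verify that the proposed $P(v)$ is nef in each of the two ranges $v\in[0,2]$ and $v\in[2,5]$ by checking that $P(v)\cdot\Gamma\geqslant 0$ for every component $\Gamma$ of $\Supp(N(v))$; the transition at $v=2$ occurs precisely when $P(v)\cdot C_1=0$, which is exactly where $C_1$ must be added to the negative part. Orthogonality $P(v)\cdot N(v)=0$ is immediate from the construction. From this, the formulas for $(P(v))^2$ and $P(v)\cdot A$ follow routinely using $A^2=-2$, $K_S\cdot A=0$, and the intersection pattern of the dual graph (only $B$ and $B_1$ meet $A$, each transversally once).

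Next I would evaluate
$$S_S(A)=\frac{1}{3}\Bigg(\int_0^2\Big(3-\frac{3v^2}{10}\Big)dv+\int_2^5\frac{(5-v)^2}{5}dv\Bigg)=\frac{1}{3}\Big(\frac{26}{5}+\frac{9}{5}\Big)=\frac{7}{3},$$
which yields $\delta_P(S)\leqslant\frac{3}{7}$ for every $P\in A$. For the lower bound, fix $P\in A\setminus B$. Since $B$ is the only component of $\Supp(N(v))$ adjacent to $A$ other than $B_1$ and $C_1$ (and $C_1\cdot A=0$), the local intersection $(N(v)\cdot A)_P$ is nonzero only when $P$ coincides with $A\cap B_1$, in which case it equals $\frac{v}{2}$ for $v\in[0,2]$ and $v-1$ for $v\in[2,5]$. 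Substituting into $h(v)=(P(v)\cdot A)(N(v)\cdot A)_P+\tfrac{1}{2}(P(v)\cdot A)^2$ gives the clean bound
$$h(v)\leqslant\begin{cases}\dfrac{39v^2}{200}&\text{if }v\in[0,2],\\[4pt] \dfrac{(5-v)(9v-5)}{50}&\text{if }v\in[2,5].\end{cases}$$

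Finally I would integrate: the two pieces contribute $\frac{13}{25}$ and $\frac{99}{50}$, totalling $\frac{5}{2}$, so $S(W^A_{\bullet,\bullet};P)\leqslant\frac{2}{3}\cdot\frac{5}{2}=\frac{5}{3}<\frac{7}{3}=S_S(A)$. Therefore, by \eqref{estimation1},
$$\delta_P(S)\geqslant\min\Big\{\tfrac{1}{S_S(A)},\tfrac{1}{S(W^A_{\bullet,\bullet};P)}\Big\}=\tfrac{3}{7},$$
combined with the upper bound this gives equality. The one step that is genuinely delicate is guessing the correct linear equivalence and the breakpoint $v=2$ for the Zariski decomposition; once the graph is drawn with multiplicities, both are forced, but getting the coefficients right (especially the $\frac{v}{5}$ weighting on the $\mathbb{D}$-type tail versus the $\frac{v}{2}$ weighting on the $\mathbb{A}_1$-like head at $B_1$) is where numerical slips would most plausibly appear; I would double-check by evaluating $P(v)\cdot\Gamma$ for each component $\Gamma$ at the endpoints $v=0,2,5$.
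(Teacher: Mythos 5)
Your overall strategy is exactly the paper's (exhibit an effective $\DR$-divisor in $|-K_S-5A|_{\DR}$, read off the Zariski decomposition, compute $S_S(A)=\tfrac{7}{3}$, bound $h(v)$ and integrate to get $S(W^A_{\bullet,\bullet};P)\leqslant\tfrac{5}{3}$), and all of your numerics downstream of the decomposition --- the two cases of $h(v)$, the integrals $\tfrac{13}{25}$ and $\tfrac{99}{50}$, and the values $\tfrac{7}{3}$ and $\tfrac{5}{3}$ --- agree with the paper. However, the one step you flagged as delicate is precisely the one you got wrong. Your proposed linear equivalence
$$-K_S-vA\sim_{\DR}(5-v)A+6B+5C+4D+3E+3C'+2B_1+C_1$$
cannot be correct: it involves a curve $E$ that does not occur in this dual graph (those coefficients appear to have been transplanted from Lemma~\ref{deg3-613_54_4_points}, where $A$ is a different curve of the same $\mathbb{E}_6$ configuration), and its coefficients are inconsistent with the negative part you then use, since setting $v=5$ in the lemma's $N(v)$ gives $6B+4C+2D+3C'+4B_1+3C_1$, not $6B+5C+4D+3E+3C'+2B_1+C_1$. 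One can also see directly that your divisor is not anticanonical: intersecting with $B$ (which meets $A$, $C$ and $C'$ once each and has $B^2=-2$) gives $5-12+5+3=1\neq 0$. The correct equivalence, which the paper states, is
$$-K_S-vA\sim_{\DR}(5-v)A+6B+4C+2D+3C'+4B_1+3C_1,$$
and with it every component test $P(v)\cdot\Gamma\geqslant 0$ and the breakpoint at $v=2$ (where $P(v)\cdot C_1$ vanishes) work out as you describe. Since the entire decomposition, and hence $\tau(A)=5$ and the formulas for $(P(v))^2$ and $P(v)\cdot A$, rest on this identity, the error is not cosmetic; as written the first step of your argument fails, although your own proposed sanity check (evaluating $P(v)\cdot\Gamma$ on each component) would have caught it, and the remainder of the proof needs no change once the coefficients are corrected.
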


\begin{proof}
The Zariski Decomposition  follows from $-K_S-vA\sim_{\DR} (5-v)A+6B+4C+2D+3C'+4B_1+3C_1$.
We have
$S_S(A)=\frac{7}{3}$.
Thus, $\delta_P(S)\le \frac{3}{7}$ for $P\in A$. Moreover, for $P\in A\backslash B$ we have:
$$h(v) \le\begin{cases}
\frac{39v^2}{200}\text{ if } v\in[0,2],\\
\frac{(5 - v) (9 v - 5)}{50}\text{ if }v\in[2,5].
\end{cases}$$
So 
$S(W_{\bullet,\bullet}^{A};P) \le \frac{5}{3}<\frac{7}{3}$.
Thus, $\delta_P(S)=\frac{3}{7}$ if $P\in A\backslash B$.
\end{proof}
\begin{lemma}\label{deg3-13_3_6_points}
Suppose $P$ belongs to a $(-2)$-curve $A$ and there exist $(-1)$-curves and $(-2)$-curves   which form the following dual graph:
\begin{figure}[h!]
    \centering
\includegraphics[width=6cm]{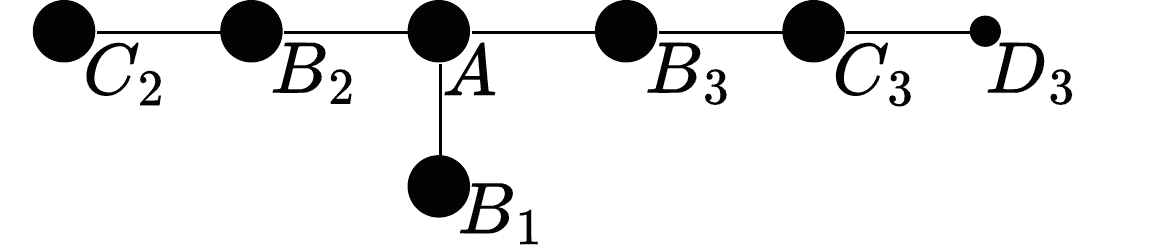}
    \caption{Dual graph: $(-K_S)^2=3$ and $\delta_P(S)=\frac{1}{3}$}
\end{figure}
\par  Then  $\tau(A)=6$ and the Zariski Decomposition of the divisor $-K_S-vA$ is given by:
{   
{\allowdisplaybreaks\begin{align*}
&&P(v)=\begin{cases}
-K_S-vA-\frac{v}{2}B_1-\frac{v}{3}(2B_2+C_2+2B_3+C_3)\text{ if }v\in[0,3],\\
-K_S-vA-\frac{v}{2}B_1-\frac{v}{3}(2B_2+C_2)-(v-1)B_3-(v-2)C_3-(v-3)D_3\text{ if }v\in[3,6].
\end{cases}\\&&N(v)=\begin{cases}\
\frac{v}{2}B_1+\frac{v}{3}(2B_2+C_2+2B_3+C_3)\text{ if }v\in[0,3],\\
\frac{v}{2}B_1+\frac{v}{3}(2B_2+C_2)+(v-1)B_3+(v-2)C_3+(v-3)D_3\text{ if }v\in[3,6].
\end{cases}
\end{align*}}
}
Moreover, 
$$(P(v))^2=\begin{cases}3-\frac{v^2}{6}\text{ if }v\in[0,3],\\
\frac{(6-v)^2}{6}\text{ if }v\in[3,6].
\end{cases}P(v)\cdot A=\begin{cases}\frac{v}{6}\text{ if }v\in[0,3],\\
1-\frac{v}{6}\text{ if }v\in[3,6].
\end{cases}$$
In this case $\delta_P(S)=\frac{1}{3}\text{ if }P\in A$.
\end{lemma}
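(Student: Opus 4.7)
The plan follows the template used throughout this section. First I verify the global linear equivalence
$$-K_S - vA \sim_{\DR} (6-v)A + 3B_1 + 4B_2 + 2C_2 + 5B_3 + 4C_3 + 3D_3,$$
which pins down $\tau(A) = 6$ and drives the Zariski decomposition. The coefficients of the claimed $N(v)$ on each phase are determined by solving the linear system $P(v) \cdot C = 0$ for every $(-2)$-curve $C$ in the support: on $[0, 3]$ only the three branches of the $\mathbb{E}_6$-configuration incident to $A$ contribute, with weight $\tfrac{v}{2}$ on the short branch $B_1$ and weight $\tfrac{v}{3}$ on each of the two long branches through $B_2$ and $B_3$. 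At $v = 3$ the intersection $P(v) \cdot D_3$ vanishes and then becomes negative, so on $[3, 6]$ the long branch through $B_3$ expands to include the line $D_3$, and the same linear system (augmented with the $(-1)$-curve relation on $D_3$) forces the coefficients $(v-1, v-2, v-3)$ on $(B_3, C_3, D_3)$. The stated formulas for $P(v)^2$ and $P(v) \cdot A$ follow by computing $-K_S \cdot P(v)$ and $A \cdot P(v)$ directly, noting that $-K_S \cdot N(v)$ is zero on the first phase and equals $v - 3$ on the second since $D_3$ is the unique component of $N(v)$ meeting $-K_S$ nontrivially.

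Integration then gives $S_S(A) = \tfrac{1}{3}\int_0^6 P(v)^2 \, dv = 3$, so $\delta_P(S) \leq \tfrac{1}{3}$ for every $P \in A$. For the matching lower bound I apply estimate (\ref{estimation1}) and control $h(v)$ pointwise on $A$. A point $P \in A$ lies on a component of $N(v)$ only if it equals one of the three intersection points $A \cap B_i$ for $i \in \{1, 2, 3\}$; at every other point of $A$ the local contribution $(N(v) \cdot A)_P$ vanishes, and the resulting polynomial integration gives $S(W^A_{\bullet,\bullet}; P) = \tfrac{1}{6} \leq 3$. At each of the three boundary points the computation is the same routine integration once $(N(v) \cdot A)_P$ is recorded on each phase; the worst case is $A \cap B_3$, where $B_3$ sits on the long branch whose weight in $N(v)$ switches from $\tfrac{2v}{3}$ to $v - 1$ at $v = 3$, yielding $S(W^A_{\bullet,\bullet}; P) = \tfrac{7}{3} \leq 3$. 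Since $S(W^A_{\bullet,\bullet}; P) \leq S_S(A)$ in every subcase, inequality (\ref{estimation1}) forces $\delta_P(S) \geq \tfrac{1}{3}$.

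The main difficulty is the bookkeeping at the phase boundary $v = 3$: the long branch through $B_3$ changes its contribution to $N(v)$, and one must verify both continuity of $P(v)$ at $v = 3$ and nefness of $P(v)$ on $[3, 6]$ before integrating. Once this transition is secured, every subcase reduces to integrating a piecewise polynomial of degree at most two in $v$, and the conclusion $\delta_P(S) = \tfrac{1}{3}$ follows immediately from (\ref{estimation1}).
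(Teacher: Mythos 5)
Your proposal is correct and follows essentially the same route as the paper: the same $\mathbb{R}$-linear equivalence $-K_S-vA\sim_{\DR}(6-v)A+3B_1+4B_2+2C_2+5B_3+4C_3+3D_3$, the same two-phase Zariski decomposition with $D_3$ entering the negative part at $v=3$, the value $S_S(A)=3$ for the upper bound, and the pointwise control of $h(v)$ at the generic point of $A$ and at the three intersection points $A\cap B_i$ (your values $\tfrac{1}{6}$ and $\tfrac{7}{3}$ at the worst point $A\cap B_3$ agree with the paper's computations, which also record $\tfrac{5}{3}$ at $A\cap B_1$ and $\tfrac{13}{6}$ at $A\cap B_2$). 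All of these are $\le 3=S_S(A)$, so estimate (\ref{estimation1}) gives the matching lower bound exactly as in the paper.
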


\begin{proof}
The Zariski Decomposition  follows from $-K_S-vA\sim_{\DR} (6-v)A+3B_1+4B_2+2C_2+5B_3+4C_3+3D_3$.
We have
$S_S(A)=3$. Thus, $\delta_P(S)\le \frac{1}{3}$ for $P\in A$. Moreover, if $P\in A\backslash (B_1 \cup B_3)$ or if $P\in  A\cap B_3$ or if  $P\in  A\cap B_1$ we have:
$$h(v) \le\begin{cases}
\frac{v^2}{8} \text{ if }v\in[0,3],\\
\frac{(6 - v) (7 v + 6)}{72}\text{ if }v\in[3,6].\end{cases}
\text{ or }
h(v) \le \begin{cases}
\frac{v^2}{8}\text{ if }v\in[0,3],\\
\frac{(6 - v) (11 v - 6)}{72}\text{ if }v\in[3,6].\end{cases}
$$$$\text{ or }
h(v) \le \begin{cases}
\frac{7v^2}{72}\text{ if }v\in[0,3],\\
\frac{(6 - v) (5 v + 6)}{72}\text{ if }v\in[3,6].\end{cases}$$
So  $S(W_{\bullet,\bullet}^{A};P) \le \frac{13}{6}<3$ or $S(W_{\bullet,\bullet}^{A};P) \le \frac{7}{3}<3$ or $S(W_{\bullet,\bullet}^{A};P) \le \frac{5}{3}<3$. Thus, $\delta_P(S)=\frac{1}{3}$ if $P\in A$.
\end{proof}

\subsection{Finding $\delta$-invariants for degree $3$}

Let $X$ be a singular del Pezzo surface of degree $3$ with and $S$ be a minimal resolution of $X$. Then there are several possible cases:
\begin{itemize}
    \item[I.] $X$ has an $\DA_1$ singularity and contains $21$ lines. In this case, we let  $E$ be the exceptional divisor, $L_{i}$ for $i\in\{1,2,3,4,5,6\}$ be the lines on $S$,
    \item[II.] $X$ has two $\DA_1$ singularities and contains $16$ lines. In this case, we let $E_1$ and $E_2$ be the exceptional divisors, $L_{i,j}$ for $i\in\{1,2\}$, $j\in\{1,2,3,4\}$ be the lines on $S$,
    \item[III.] $X$ has three $\DA_1$ singularities and contains $12$ lines. In this case, we let $E_1$, $E_2$ and $E_3$ be the exceptional divisors, $L_{12}$, $L_{23}$, $L_{13}$, $L_{i,j}$ for $i\in\{1,2,3\}$, $j\in\{1,2\}$ be the lines on $S$,  
    \item[IV.] $X$ has four $\DA_1$ singularities and contains $9$ lines. In this case, we let $E_1$, $E_2$, $E_3$ and $E_4$ be the exceptional divisors,  $L_{ij}$ for $(i,j)\in\{(1,2),(1,3),(1,4),(2,3),(2,4),(3,4)\}$ be the lines on $S$, 
    \item[V.] $X$ has $\DA_2$ singularity and contains $15$ lines. In this case, we let
    $E_1$ and $E_1'$ be the exceptional divisors,  $L_{1,i}$ and  $L_{1,i}'$ for $i\in\{1,2,3\}$ be the lines on $S$,
    \item[VI.] $X$ has $\DA_2$ and $\DA_1$ singularities and contains $11$ lines. In this case, we let $E_1$, $E_1'$ and $E_2$ be the exceptional divisors, $L_{12}$, $L_1$, $L_{i,j}$ for $i\in\{1,2\}$ and $j\in\{1,2,3\}$ be the lines on $S$, 
    \item[VII.] $X$ has $\DA_2$ and two $\DA_1$ singularities and contains $8$ lines. In this case, we let $E_1$, $E_1'$, $E_2$ and $E_2'$ be the exceptional divisors, $L_{12}$, $L_{12}'$,  $L_{i,1}$ and $L_{i,1}'$ for $i\in\{1,2\}$ be the lines on $S$
    \item[VIII.] $X$ has two $\DA_2$ singularities and contains $7$ lines. In this case, we let $E_1$, $E_1'$, $E_2$ and $E_2'$ be the exceptional divisors, $L_{12}$, $L_{i,j}$ for $i\in\{1,2\}$ and $j\in\{1,2,3\}$ be the lines on $S$,
    \item[IX.] $X$ has two $\DA_2$ and $\DA_1$ singularities and contains $5$ lines. In this case, we let $E_1$, $E_1'$, $E_2$, $E_2'$ and $E_3$ be the exceptional divisors, $L_{12}$, $L_{13}$, $L_{23}$, $L_{1,1}$ and $L_{2,1}$  be the lines on $S$,
    \item[X.] $X$ has three $\DA_2$ singularities and contains $3$ lines. In this case, we let $E_1$, $E_1'$, $E_2$, $E_2'$, $E_3$ and $E_3'$ be the exceptional divisors, $L_{12}$, $L_{13}$, $L_{23}$  be the lines on $S$,
    \item[XI.] $X$ has  $\DA_3$ singularity and contains $10$ lines. In this case, we let $E_1$, $E_2$ and $E_1'$ be the exceptional divisors, $L_{2,1}$, $L_{1,i}$ and $L_{1,i}'$ for $i\in\{1,2\}$ be the lines on $S$,
    \item[XII.] $X$ has  $\DA_3$ and $\DA_1$ singularities and contains $7$ lines. In this case, we let $E_1$, $E_1'$, $E_2$ and $E_3$ be the exceptional divisors, $L_{13}$, $L_{2,1}$, $L_{i,1}$ and $L_{i,1}'$ for $i\in\{1,3\}$ be the lines on $S$,
     \item[XIII.] $X$ has  $\DA_3$ and two $\DA_1$ singularities and contains $5$ lines. In this case, we let $E_1$, $E_1'$, $E_2$, $E_3$ and $E_3'$ be the exceptional divisors, $L_{13}$, $L_{13}'$, $L_{2,1}$ and $L_{3}$  be the lines on $S$,
     \item[XIV.] $X$ has  $\DA_4$  singularity and contains $6$ lines. In this case, we let $E_1$, $E_2$, $E_3$ and $E_4$ be the exceptional divisors, $L_{1,1}$, $L_{3,1}$, $L_{4,1}$ and $L_{4,2}$ be the lines on $S$,
     \item[XV.] $X$ has  $\DA_4$ and $\DA_1$  singularities and contains $4$ lines. In this case, we let $E_1$, $E_2$, $E_3$, $E_4$ and $E_5$ be the exceptional divisors, $L_{1,1}$, $L_{3,1}$ and $L_{5,1}$  be the lines on $S$,
     \item[XVI.] $X$ has  $\DA_5$  singularity and contains $3$ lines. In this case, we let $E_1$, $E_2$, $E_3$, $E_4$ and $E_5$ be the exceptional divisors, $L_{2,1}$,  $L_{5,1}$ and $L_{5,2}$ be the lines on $S$,
     \item[XVII.] $X$ has  $\DA_5$  and $\DA_1$ singularities and contains $2$ lines. In this case, we let $E_1$, $E_2$, $E_3$, $E_4$ and $E_5$ be the exceptional divisors, $L_{2,1}$,  $L_{56}$  be the lines on $S$,
     \item[XVIII.] $X$ has  $\mathbb{D}_4$  singularity and contains $6$ lines. In this case, we let $E_1$, $E_2$, $E_3$ and $E$ be the exceptional divisors, $L_{i,1}$ for $i\in\{1,2,3\}$ be the lines on $S$,
     \item[XIX.] $X$ has  $\mathbb{D}_5$  singularity and contains $3$ lines. In this case, we let $E_1$, $E_2$, $E_3$, $E_4$ and $E$ be the exceptional divisors, $L$ and $L_{4,1}$  be the lines on $S$,
     \item[XX.] $X$ has  $\mathbb{E}_6$  singularity and contains $1$ line. In this case, we let $E_1$, $E_2$, $E_3$, $E_4$, $E_5$ and $E$ be the exceptional divisors, $L_{5,1}$  be the line on $S$.
\end{itemize}
such that the dual graph of the $(-1)$-curves  and $(-2)$-curves on $S$ is given the picture below. 
\begin{center}
\hspace*{0cm}\includegraphics[width=17cm]{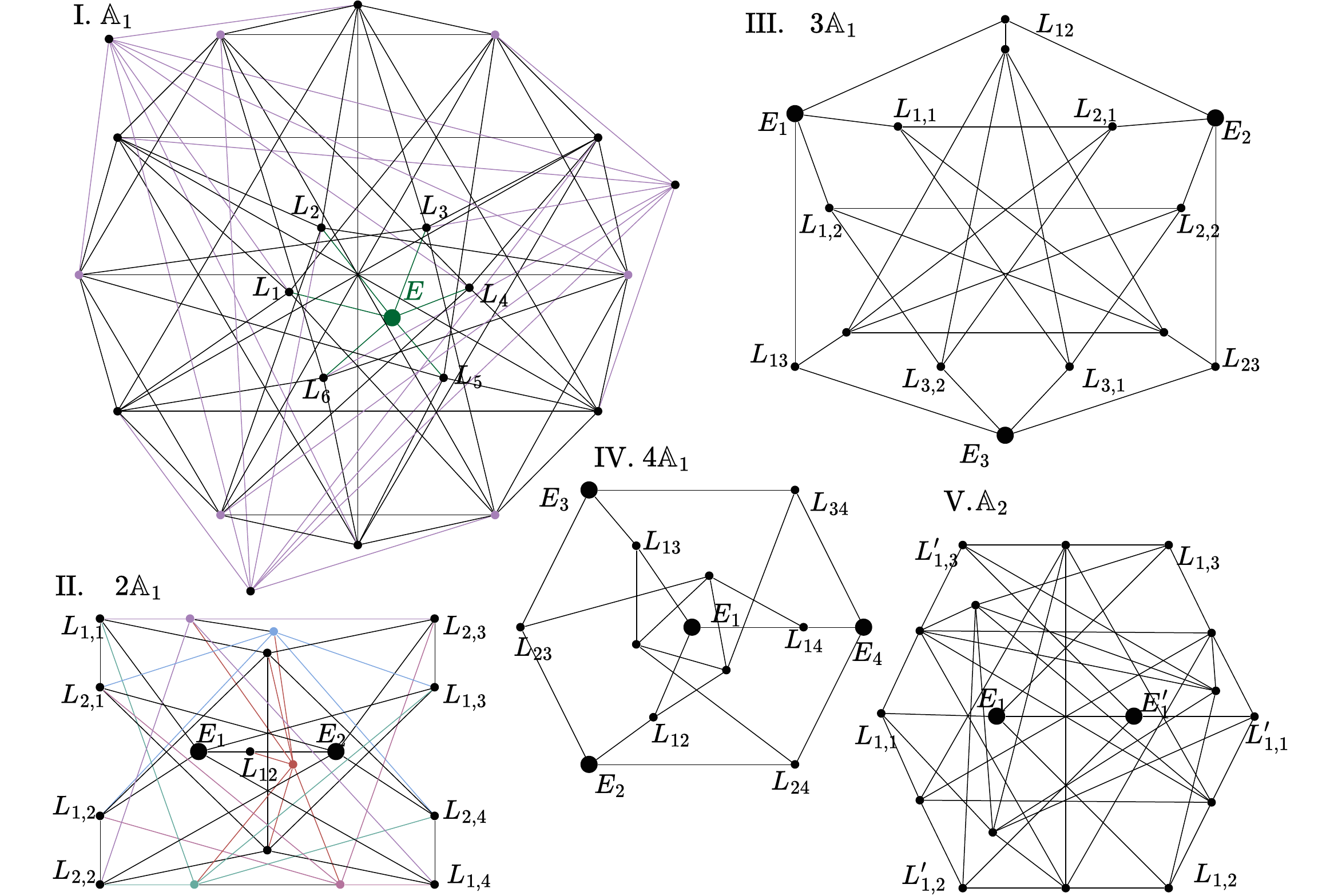}
\\
\hspace*{0cm}\includegraphics[width=17cm]{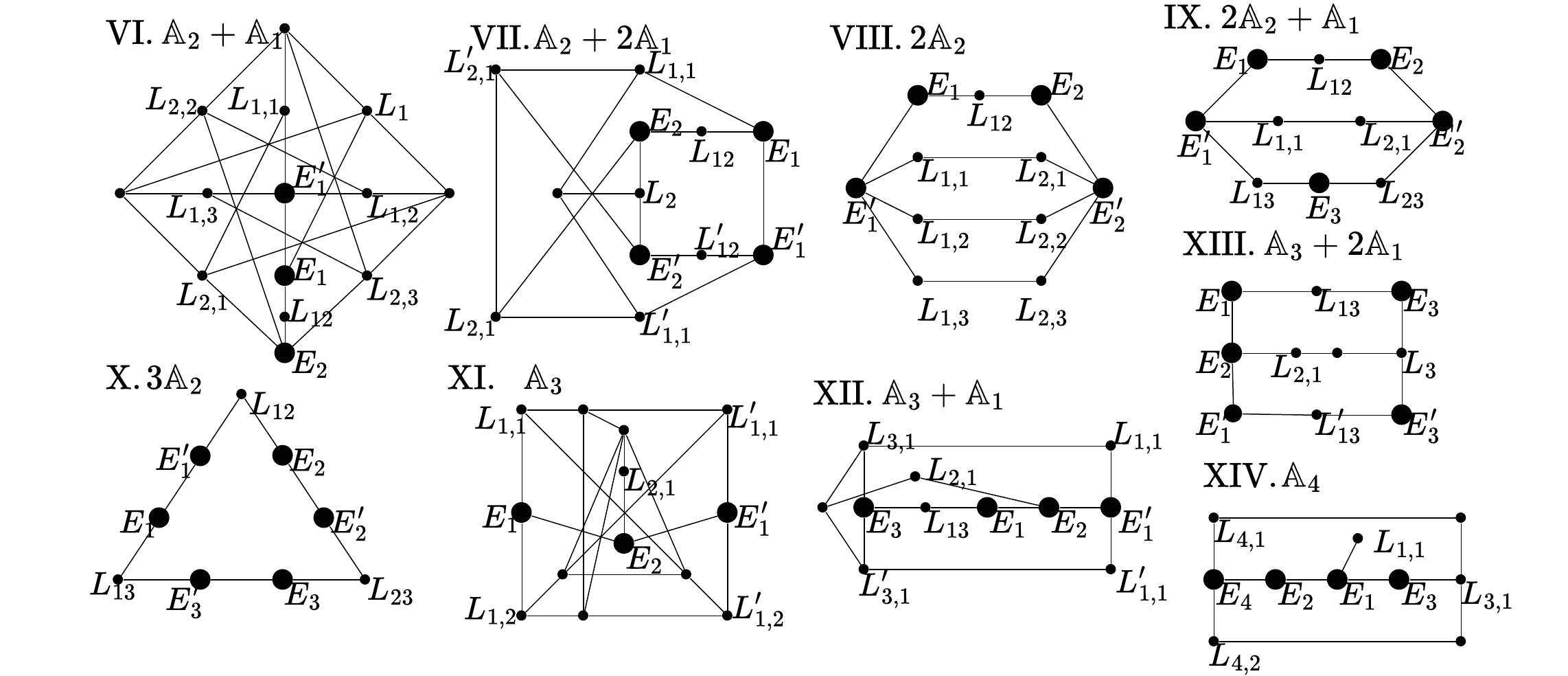}
\\
\begin{figure}[h!]
\hspace*{0cm}\includegraphics[width=17cm]{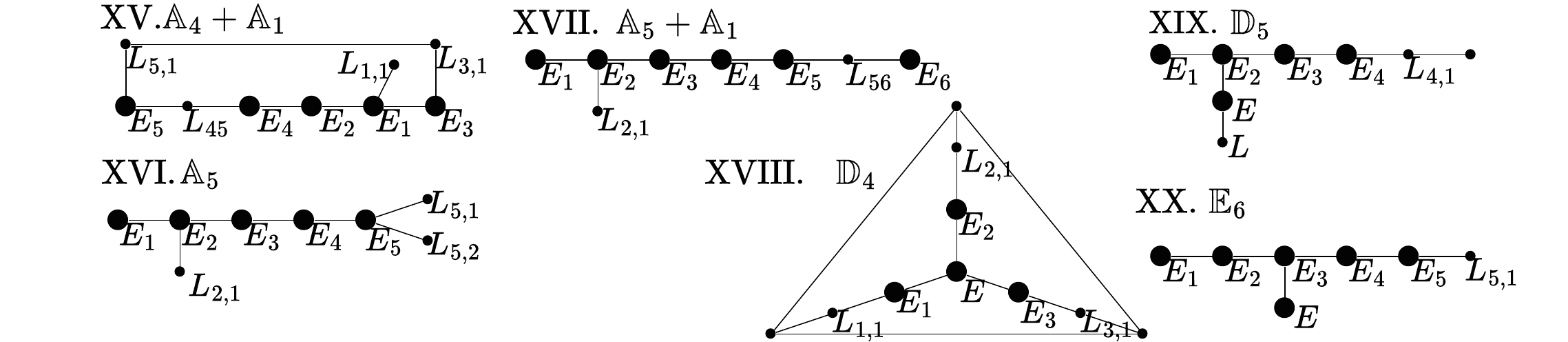}
\caption{Du Val del Pezzo surfaces with $(-K_S)^2 = 3$}
\end{figure}
\end{center}
{ One has
\begin{itemize}
 \item[I.] $\delta(X)=\frac{6}{5}$ since depending on the position of point $P\in S$ we have 
 \begin{table}[h!]
\hspace*{0.5cm}
\begin{tabular}{ | c || c | c | c |}
 \hline 
 $P$ & $E$ & $\big(\bigcup_{i\in\{1...6\}}L_i\big)\backslash E$ & o/w \\\hline
$\delta_P(S)$ & $\frac{6}{5}$ & $\frac{27}{17}$ & $\ge \frac{3}{2}$\\\hline
    \end{tabular}
\caption{Local $\delta$-invariants: $(-K_S)^2=3$ and  $\DA_1$ singularity}
\end{table}
 \item[II.] $\delta(X)=\frac{6}{5}$ since depending on the position of point $P\in S$ we have 
\begin{table}[h!]
\hspace*{0.5cm}
\begin{tabular}{ | c || c | c | c | c | c |}
 \hline 
 $P$ & $\mathbf{E}_2$ & $ L_{12}  \backslash\mathbf{E}_2$ & $ \big(\bigcup_{i\in\{1,2\},j\in\{1,2,3,4\}}L_{i,j}\big) \backslash \Big(\mathbf{L}_2\cup \mathbf{E}_2\Big)$ & $\mathbf{L}_2$ & o/w \\\hline
$\delta_P(S)$ & $\frac{6}{5}$ & $\frac{9}{7}$ & $\frac{27}{17}$  & $\ge \frac{54}{35}$ & $\ge \frac{3}{2}$\\\hline
    \end{tabular}
  \\\hspace*{0.5cm}   where $\mathbf{E}_2:= E_1\cup E_2$, $\mathbf{L}_2:= \bigcup_{k\in\{1,2,3,4\}}\big(L_{1,k}\cap L_{2,k}\big)$.
\caption{Local $\delta$-invariants: $(-K_S)^2=3$ and  $2\DA_1$ singularities}
\end{table}
\newpage
    \item[III.] $\delta(X)=\frac{6}{5}$ since depending on the position of point $P\in S$ we have 
\begin{table}[h!]
\hspace*{0.5cm}
\begin{tabular}{ | c || c | c | c | c | c |}
 \hline 
 $P$ & $\mathbf{E}_3$ & $ (L_{12}\cup L_{23}\cup L_{13})  \backslash \mathbf{E}_3$ & $ \big(\bigcup_{i\in\{1,2,3\},j\in\{1,2\}}L_{i,j}\big) \backslash \Big(\mathbf{L}_3\cup \mathbf{E}_3\Big)$ & $\mathbf{L}_3$ & o/w \\\hline
$\delta_P(S)$ & $\frac{6}{5}$ & $\frac{9}{7}$ & $\frac{27}{17}$  & $\ge \frac{54}{35}$ & $\ge \frac{3}{2}$\\\hline
    \end{tabular}
    \\\hspace*{0.5cm} where $\mathbf{E}_3:= E_1\cup E_2\cup E_3$, $\mathbf{L}_3:= \bigcup_{k\in\{1,2\}, i_1,i_2\in\{1,2,3\},i_1\ne i_2}\big(L_{i_1,k}\cap L_{i_2,k}\big)$.
\caption{Local $\delta$-invariants: $(-K_S)^2=3$ and  $3\DA_1$ singularities}
\end{table}
   \item[IV.] $\delta(X)=\frac{6}{5}$ since depending on the position of point $P\in S$ we have 
\begin{table}[h!]
\hspace*{0.5cm}
\begin{tabular}{ | c || c | c | c | }
 \hline 
 $P$ & $\mathbf{E}_4$ & $ (L_{12}\cup L_{13}\cup L_{14}\cup L_{23}\cup L_{24}\cup L_{34}) \backslash\mathbf{E}_4$   & o/w \\\hline
$\delta_P(S)$ & $\frac{6}{5}$ & $\frac{9}{7}$    & $\ge \frac{3}{2}$\\\hline
    \end{tabular}
   \\\hspace*{0.5cm} where $\mathbf{E}_4:= E_1\cup E_2\cup E_3\cup E_4$.
\caption{Local $\delta$-invariants: $(-K_S)^2=3$ and  $4\DA_1$ singularities}
\end{table}
\item[V.] $\delta(X)=1$ since depending on the position of point $P\in S$ we have 
\begin{table}[h!]
\hspace*{0.5cm}
\begin{tabular}{ | c || c | c | c | }
 \hline 
 $P$ & $E_1\cup E_1'$ & $ \bigcup_{i\in\{1,2,3\}}  \big(L_{1,i}\cup L_{1,i}'\big) \backslash (E_1\cup E_1')$   & o/w \\\hline
$\delta_P(S)$ & $1$ & $\frac{3}{2}$    & $\ge \frac{3}{2}$\\\hline
    \end{tabular}
\caption{Local $\delta$-invariants: $(-K_S)^2=3$ and  $\DA_2$ singularity}
\end{table}
\item[VI.] $\delta(X)=1$ since depending on the position of point $P\in S$ we have 
\begin{table}[h!]
\hspace*{0.5cm}
\begin{tabular}{ | c || c | c | c |c | c |  c | }
 \hline 
 $P$ & $\mathbf{E}_6^{(1)}$ & $ (E_2\cup L_{12})\backslash E_1$  & $\big(\bigcup_{i\in\{1,2,3\}} L_{1,i}\cup L_1 \big)\backslash \mathbf{E}_6^{(1)}$ & $ (L_{2,1}\cup L_{2,1}')\backslash\mathbf{E}_6^{(2)}$  & o/w \\\hline
$\delta_P(S)$ & $1$ & $\frac{6}{5}$ & $\frac{3}{2}$  & $\frac{27}{17}$  & $\ge \frac{3}{2}$\\\hline
    \end{tabular}
    \\\hspace*{0.5cm} where $\mathbf{E}_6^{(1)}:= E_1\cup E_1'$, $\mathbf{E}_6^{(2)}:= E_2\cup E_2'$.
\caption{Local $\delta$-invariants: $(-K_S)^2=3$ and  $\DA_2\DA_1$ singularities}
\end{table}
\item[VII.] $\delta(X)=1$ since depending on the position of point $P\in S$ we have 
\begin{table}[h!]
\hspace*{0.5cm}
\begin{tabular}{ | c || c | c | c |c | c |  c | }
 \hline 
 $P$ & $\mathbf{E}_7^{(1)}$ & $(\mathbf{E}_7^{(2)}\cup L_{12}\cup L_{12}')\backslash \mathbf{E}_7^{(1)}$ & $L_{2} \backslash \mathbf{E}_7^{(2)}$ & $\mathbf{L}_7^{(1)}\backslash \mathbf{E}_7^{(1)}$ & $\mathbf{L}_7^{(2)}\backslash \mathbf{E}_7^{(2)}$  & o/w \\\hline
$\delta_P(S)$ & $1$ & $\frac{6}{5}$  & $\frac{9}{7}$ & $\frac{3}{2}$  & $\frac{27}{17}$  & $\ge \frac{3}{2}$\\\hline
    \end{tabular}
    \\\hspace*{0.5cm} where $\mathbf{E}_7^{(1)}:= E_1\cup E_1'$, $\mathbf{E}_7^{(2)}:= E_2\cup E_2'$, $\mathbf{L}_7^{(1)}:= L_{1,1}\cup L_{1,1}'$, $\mathbf{L}_7^{(2)}:= L_{2,1}\cup L_{2,1}'$.
\caption{Local $\delta$-invariants: $(-K_S)^2=3$ and  $\DA_22\DA_1$ singularities}
\end{table}
\item[VIII.] $\delta(X)=1$ since depending on the position of point $P\in S$ we have 
\begin{table}[h!]
\hspace*{0.5cm}
\begin{tabular}{ | c || c | c | c | }
 \hline 
 $P$ & $E_1\cup E_1'\cup E_2\cup E_2'\cup L_{12}$ & $ \big(\bigcup_{i\in\{1,2\}, j\in\{1,2,3\}}L_{i,j}\big)\backslash ( E_1'\cup E_2')$   & o/w \\\hline
$\delta_P(S)$ & $1$ & $\frac{3}{2}$    & $\ge \frac{3}{2}$\\\hline
    \end{tabular}
\caption{Local $\delta$-invariants: $(-K_S)^2=3$ and  $2\DA_2$ singularities}
\end{table}
\newpage
\item[IX.] $\delta(X)=1$ since depending on the position of point $P\in S$ we have 
\begin{table}[h!]
\hspace*{0.5cm}
\begin{tabular}{ | c || c | c | c |c |   c | }
 \hline 
 $P$ & $\mathbf{E}_9\cup L_{12}$ & $(E_3\cup L_{13}\cup L_{23})\backslash (E_1'\cup E_2')$ & $ (L_{1,1}\cup L_{2,1})\backslash (E_1'\cup E_2')$  & o/w \\\hline
$\delta_P(S)$ & $1$ & $\frac{6}{5}$ & $\frac{3}{2}$    & $\ge \frac{3}{2}$\\\hline
    \end{tabular}\\
    \hspace*{0.5cm} where $\mathbf{E}_9:=E_1\cup E_1'\cup E_2\cup E_2'\cup L_{12}$.
\caption{Local $\delta$-invariants: $(-K_S)^2=3$ and  $2\DA_2\DA_1$ singularities}
\end{table}
\item[X.] $\delta(X)=1$ since depending on the position of point $P\in S$ we have 
\begin{table}[h!]
\hspace*{0.5cm}
\begin{tabular}{ | c || c | c | }
 \hline 
 $P$ & $E_1\cup E_1'\cup E_2\cup E_2'\cup E_3\cup E_3'\cup L_{12}\cup L_{13}\cup L_{23}$   & o/w \\\hline
$\delta_P(S)$ & $1$     & $\ge \frac{3}{2}$\\\hline
    \end{tabular}
\caption{Local $\delta$-invariants: $(-K_S)^2=3$ and  $3\DA_2$ singularities}
\end{table}
\item[XI.] $\delta(X)=\frac{9}{11}$ since depending on the position of point $P\in S$ we have 
\begin{table}[h!]
\hspace*{0.5cm}
\begin{tabular}{ | c || c | c | c | c | c | c | }
 \hline 
 $P$ & $E_2$ & $(E_1\cup E_1')\backslash E_2 $  & $L_{2,1}\backslash E_2$ & $ \bigcup_{i\in\{1,2\}} \big(L_{1,i}\cup L_{1,i}'\big)\backslash (E_1\cup E_1')$ & o/w \\\hline
$\delta_P(S)$ & $\frac{9}{11}$ & $\frac{18}{19}$ & $\frac{9}{7}$  & $\frac{54}{37}$     & $\ge \frac{3}{2}$\\\hline
    \end{tabular}
\caption{Local $\delta$-invariants: $(-K_S)^2=3$ and  $\DA_3$ singularity}
\end{table}
\item[XII.] $\delta(X)=\frac{9}{11}$ since depending on the position of point $P\in S$ we have 
\begin{table}[h!]
\hspace*{0.5cm}
\begin{tabular}{ | c || c | c |c |c | c | c | c | c | }
 \hline 
 $P$ & $E_2$ & $\mathbf{E}_{12}\backslash E_2 $  & $L_{13}\backslash E_1$ & $E_3\backslash L_{13}$ & $ L_{2,1}\backslash E_2$ & $ \mathbf{L}_{12}^{(1)}\backslash E_1'$ & $\mathbf{L}_{12}^{(3)}\backslash (E_3\cup \mathbf{L}_{12}^{(1)}) $ & o/w \\\hline
$\delta_P(S)$ & $\frac{9}{11}$ & $\frac{18}{19}$  & $\frac{9}{8}$  & $\frac{6}{5}$ & $\frac{9}{7}$  & $\frac{54}{37}$   & $\frac{27}{17}$   & $\ge \frac{3}{2}$\\\hline
    \end{tabular}
    \\ \hspace*{0.5cm} where $\mathbf{E}_{12}:= E_1\cup E_1'$, $\mathbf{L}_{12}^{(1)}:= L_{1,1}\cup L_{1,1}'$, $\mathbf{L}_{12}^{(3)}:= L_{3,1}\cup L_{3,1}'$.
\caption{Local $\delta$-invariants: $(-K_S)^2=3$ and  $\DA_3\DA_1$ singularities}
\end{table}
\item[XIII.] $\delta(X)=\frac{9}{11}$ since depending on the position of point $P\in S$ we have 
\begin{table}[h!]
\hspace*{0.5cm}
\begin{tabular}{ | c || c | c | c | c | c | c | c | }
 \hline 
 $P$ & $E_2$ & $\mathbf{E}_{13}^{(1)}\backslash E_2 $  & $\mathbf{L}_{13}\backslash\mathbf{E}_{13}^{(1)}$ & $ \mathbf{E}_{13}^{(3)}\backslash \mathbf{L}_{13}$ & $(L_{2,1}\cup L_3)\backslash (E_2\cup \mathbf{E}_{13}^{(3)}) $ & o/w \\\hline
$\delta_P(S)$ & $\frac{9}{11}$ & $\frac{18}{19}$ & $\frac{9}{8}$ & $\frac{6}{5}$ & $\frac{9}{7}$      & $\ge \frac{3}{2}$\\\hline
    \end{tabular}
   \\\hspace*{0.5cm} where $\mathbf{E}_{13}^{(1)}:= E_1\cup E_1'$, $\mathbf{E}_{13}^{(3)}:= E_3\cup E_3'$, $\mathbf{L}_{13}:= L_{13}\cup L_{13}'$.
\caption{Local $\delta$-invariants: $(-K_S)^2=3$ and  $\DA_32\DA_1$ singularities}
\end{table}  
\item[XIV.] $\delta(X)=\frac{9}{13}$ since depending on the position of point $P\in S$ we have 
\begin{table}[h!]
\hspace*{0.5cm}
\begin{tabular}{ | c || c | c |c | c |c | c |c | c | }
 \hline 
 $P$ & $E_1$ & $E_2\backslash E_1$ & $E_3\backslash E_1$ & $E_4\backslash E_2$ & $L_{1,1}\backslash E_1$  & $L_{3,1}\backslash E_3$ & $(L_{4,1}\cup L_{4,2})\backslash E_4 $ & o/w \\\hline
$\delta_P(S)$ & $\frac{9}{13}$ & $\frac{18}{23}$  & $\frac{27}{29}$ & $\frac{9}{10}$ & $\frac{27}{23}$ & $\frac{27}{19}$ & $\frac{36}{25}$  & $\ge \frac{3}{2}$\\\hline
    \end{tabular}
\caption{Local $\delta$-invariants: $(-K_S)^2=3$ and  $\DA_4$ singularity}
\end{table}
\newpage
\item[XV.] $\delta(X)=\frac{9}{13}$ since depending on the position of point $P\in S$ we have 
\begin{table}[h!]
\hspace*{0.5cm}
\begin{tabular}{ | c || c | c |c | c |c |  }
 \hline 
 $P$ & $E_1$ & $E_2\backslash E_1$ & $E_3\backslash E_1$ & $E_4\backslash E_2$ & $L_{1,1}\backslash E_1$  \\\hline
$\delta_P(S)$ & $\frac{9}{13}$ & $\frac{18}{23}$  & $\frac{27}{29}$ & $\frac{9}{10}$ & $\frac{27}{23}$ \\\hline
    \end{tabular}\\
  \hspace*{0.5cm}  \begin{tabular}{ | c || c | c |c | c |c |  }
 \hline 
 $P$ &  $L_{45}\backslash E_4$ & $E_5\backslash L_{45}$ & $L_{3,1}\backslash E_3 $ & $L_{5,1}\backslash (L_{3,1}\cup E_5) $& o/w \\\hline
$\delta_P(S)$ &  $\frac{18}{17}$ & $\frac{6}{5}$ & $\frac{27}{19}$ & $\frac{27}{17}$  & $\ge \frac{3}{2}$\\\hline
    \end{tabular}
\caption{Local $\delta$-invariants: $(-K_S)^2=3$ and  $\DA_4\DA_1$ singularities}
\end{table}

\item[XVI.] $\delta(X)=\frac{3}{5}$ since depending on the position of point $P\in S$ we have 
\begin{table}[h!]
\hspace*{0.5cm}
\begin{tabular}{ | c || c | c |c | c |c |c |c | c | }
 \hline 
 $P$ & $E_2$ & $E_3\backslash E_2$ & $E_4\backslash E_3$ & $E_5\backslash E_4$ & $E_1\backslash E_2$ & $ L_{2,1}\backslash E_2$ & $(L_{5,1}\cup L_{5,2})\backslash E_5 $  & o/w \\\hline
$\delta_P(S)$ & $\frac{3}{5}$ & $\frac{2}{3}$ & $\frac{3}{4}$ & $\frac{6}{7}$ & $\frac{12}{13}$ & $1$ & $\frac{10}{7}$       & $\ge \frac{3}{2}$\\\hline
    \end{tabular}
\caption{Local $\delta$-invariants: $(-K_S)^2=3$ and  $\DA_5$ singularity}
\end{table}
\item[XVII.] $\delta(X)=\frac{3}{5}$ since depending on the position of point $P\in S$ we have 
\begin{table}[h!]
\hspace*{0.5cm}
\begin{tabular}{ | c || c | c |c | c |c |c |c | c | }
 \hline 
 $P$ & $E_2$ & $E_3\backslash E_2$ & $E_4\backslash E_3$ & $E_5\backslash E_4$ & $E_1\backslash E_2$ & $ (L_{56}\cup L_{2,1})\backslash (E_2\cup E_5)$ & $E_6\backslash L_{56}$  & o/w \\\hline
$\delta_P(S)$ & $\frac{3}{5}$ & $\frac{2}{3}$ & $\frac{3}{4}$ & $\frac{6}{7}$ & $\frac{12}{13}$ & $1$ & $\frac{6}{5}$       & $\ge \frac{3}{2}$\\\hline
    \end{tabular}
\caption{Local $\delta$-invariants: $(-K_S)^2=3$ and  $\DA_5\DA_1$ singularities}
\end{table}
\item[XVIII.] $\delta(X)=\frac{3}{5}$ since depending on the position of point $P\in S$ we have 
\begin{table}[h!]
\hspace*{0.5cm}
\begin{tabular}{ | c || c | c |c | c | }
 \hline 
 $P$ & $E$ & $(E_1\cup E_2\cup E_3)\backslash  E $ & $ (L_{1,1}\cup L_{2,1}\cup L_{3,1})\backslash  (E_1\cup E_2\cup E_3)$  & otherwise \\\hline
$\delta_P(S)$ & $\frac{3}{5}$ & $\frac{9}{11}$ & $\frac{9}{7}$      & $\ge \frac{3}{2}$\\\hline
    \end{tabular}
\caption{Local $\delta$-invariants: $(-K_S)^2=3$ and  $\mathbb{D}_4$ singularity}
\end{table}
\item[XIX.] $\delta(X)=\frac{9}{19}$ since depending on the position of point $P\in S$ we have 
\begin{table}[h!]
\hspace*{0.5cm}
\begin{tabular}{ | c || c | c |c | c |c |c |c | c | }
 \hline 
 $P$ & $E_2$ & $E_3\backslash E_2$ & $E\backslash E_2$ & $E_1\backslash E_2$ & $E_4\backslash E_3$ & $L\backslash E$ & $ L_{4,1}\backslash E_4$  & o/w \\\hline
$\delta_P(S)$ & $\frac{9}{19}$ & $\frac{3}{5}$ & $\frac{2}{3}$ & $\frac{27}{35}$ & $\frac{9}{11}$ & $\frac{9}{8}$  & $\frac{9}{7}$       & $\ge \frac{3}{2}$\\\hline
    \end{tabular}
\caption{Local $\delta$-invariants: $(-K_S)^2=3$ and  $\mathbb{D}_5$ singularity}
\end{table}
\item[XX.] $\delta(X)=\frac{1}{3}$ since depending on the position of point $P\in S$ we have 
\begin{table}[h!]
\hspace*{0.5cm}
\begin{tabular}{ | c || c | c | c |c |c |c | c | }
 \hline 
 $P$ & $E_3$ & $E_4\backslash E_3$ & $E_2\backslash E_3$ & $(L_{123}\cup E_5)\backslash (E_3\cup E_4)$ & $E_1\backslash E_2$ & $E_6\backslash E_5$  & o/w \\\hline
$\delta_P(S)$ & $\frac{1}{3}$ & $\frac{3}{7}$ & $\frac{6}{13}$ & $\frac{3}{5}$ & $\frac{3}{4}$ & $1$         & $\ge \frac{3}{2}$\\\hline
    \end{tabular}
\caption{Local $\delta$-invariants: $(-K_S)^2=3$ and  $\mathbb{E}_6$ singularity}
\end{table}
\end{itemize}}

\begin{proof}
We prove each case separately using lemmas from the previous section.
\begin{itemize}
    \item[I.]  If $P\in E$, the assertion follows from Lemma \ref{deg3-A1points} [a).]. If $P\in \big(\bigcup_{i\in\{1,.6\}}L_i\big)\backslash E$, the assertion follows from Lemma \ref{deg3-nearA1points} [a).].  If $P$ is a general point, the assertion follows from Lemma \ref{deg3-genpoint}.
    \item[II.] If $P\in E_1\cup E_2$, the assertion follows from Lemma \ref{deg3-A1points} [b).]. 
    If $P\in L_{12}  \backslash (E_1\cup E_2)$, the assertion follows from Lemma \ref{deg3-near2A1points} [a).].
    If $P\in \big(\bigcup_{i\in\{1,2\},j\in\{1,2,3,4\}} L_{i,j}\big) \backslash \Big(\bigcup_{j\in\{1,2,3,4\}}\big(L_{1,j}\cap L_{2,j}\big) \cup E_1\cup E_2\Big)$, the assertion follows from Lemma \ref{deg3-nearA1points} [b).]. 
    If $P\in\bigcup_{j\in\{1,2,3,4\}}\big(L_{1,j}\cap L_{2,j}\big)$, the assertion follows from Remark \ref{deg3-nearA1pointsRemark}.
    If $P$ is a general point, the assertion follows from Lemma \ref{deg3-genpoint}.
    \item[III.] If $P\in E_1\cup E_2 \cup E_3$, the assertion follows from Lemma \ref{deg3-A1points} [c).].
 If $P\in (L_{12}\cup L_{23}\cup L_{13})  \backslash (E_1\cup E_2\cup E_3)$, the assertion follows from Lemma \ref{deg3-near2A1points} [a).].
 If $P\in \big(\bigcup_{i\in\{1,2,3\},j\in\{1,2\}}L_{i,j}\big) \backslash \Big(\bigcup_{k\in\{1,2\}}\big(L_{1,k}\cap L_{2,k}\big) \cup E_1\cup E_2\cup E_3\Big)$, the assertion follows from Lemma \ref{deg3-nearA1points} [c).]. 
  If $P\in \bigcup_{k\in\{1,2\}}\big(L_{1,k}\cap L_{2,k}\big)$. In this case we apply Remark \ref{deg3-nearA1pointsRemark}.
   If $P$ is a general point, the assertion follows from Lemma \ref{deg3-genpoint}.
   \item[IV.] If $P\in  E_1\cup  E_2\cup  E_3\cup  E_4$, the assertion follows from Lemma \ref{deg3-A1points} [d).].
 If $P\in  (L_{12}\cup L_{13}\cup L_{14}\cup L_{23}\cup L_{24}\cup L_{34}) \backslash (\bigcup_{i\in\{1,2,3,4\}}E_i)$, the assertion follows from Lemma \ref{deg3-near2A1points} [a).].
 If $P$ is a general point, the assertion follows from Lemma \ref{deg3-genpoint}.
 \item[V.] If $P\in E_1\cup E_1'$, the assertion follows from Lemma \ref{deg3-A2point} [a).].
If $P\in \bigcup_{i\in\{1,2,3\}}  \big(L_{1,i}\cup L_{1,i}'\big) \backslash (E_1\cup E_1') $, the assertion follows from Lemma \ref{deg3-nearA2point} [a).]. 
 If $P$ is a general point, the assertion follows from Lemma \ref{deg3-genpoint}.
 \item[VI.] If $P\in E_1'$, the assertion follows from Lemma \ref{deg3-A2point} [a).].
  If $P\in E_1$, the assertion follows from Lemma \ref{deg3-A2point} [b).].
 If $P\in \bigcup_{i\in\{1,2,3\}} L_{1,i}\backslash E_1'$, the assertion follows from Lemma \ref{deg3-nearA2point} [b).]. 
  If $P\in L_1  \backslash E_1$, the assertion follows from Lemma \ref{deg3-nearA2point} [a).].
   If $P\in E_2$, the assertion follows from Lemma \ref{deg3-A1points} [e).].
  If $P\in L_{12}\backslash (E_1\cup E_2)$, the assertion follows from Lemma \ref{deg3-nearA1A2points}.
  If $P\in\big(\bigcup_{i\in\{1,2,3\} }L_{2,i}\big)\backslash \big(E_2\cup \bigcup_{i\in\{1,2,3\}} L_{1,i}\big)$, the assertion follows from Lemma \ref{deg3-nearA1points} [d).]. 
  If $P$ is a general point, the assertion follows from Lemma \ref{deg3-genpoint}.
  \item[VII.] If $P\in E_1\cup E_1'$, the assertion follows from Lemma \ref{deg3-A2point} [b).].
If $P\in E_2\cup E_2'$, the assertion follows from Lemma \ref{deg3-A1points} [f).].
 If $P\in (L_{12}\cup L_{12}')\backslash (E_1\cup E_1'\cup E_2\cup E_2')$, the assertion follows from Lemma \ref{deg3-nearA1A2points}.
  If $P\in L_{2} \backslash (E_2\cup E_2')$, the assertion follows from Lemma \ref{deg3-near2A1points} [a).].
   If $P\in (L_{1,1}\cup L_{1,1}')\backslash (E_1\cup E_1')$, the assertion follows from Lemma \ref{deg3-nearA2point} [b).].
  If $P\in (L_{2,1}\cup L_{2,1}')\backslash (E_2\cup E_2')$, the assertion follows from Lemma \ref{deg3-nearA1points} [e).].
  If $P$ is a general point, the assertion follows from Lemma \ref{deg3-genpoint}. 
   \item[VIII.] If $P\in E_1'\cup E_2'$, the assertion follows from Lemma \ref{deg3-A2point} [a).].
If $P\in E_1\cup E_2$, the assertion follows from Lemma \ref{deg3-A2point} [c).].
 If $P\in L_{12}\backslash (E_1\cup E_2)$, the assertion follows from Lemma \ref{deg3-near2A2points} [a).].
  If $P\in \big(\bigcup_{i\in\{1,2\}, j\in\{1,2,3\}}L_{i,j}\big)\backslash ( E_1'\cup E_2')$, the assertion follows from Lemma \ref{deg3-nearA2point} [c).].
   If $P$ is a general point, the assertion follows from Lemma \ref{deg3-genpoint}.
    \item[IX.] If $P\in E_1'\cup E_2'$, the assertion follows from Lemma \ref{deg3-A2point} [b).].
If $P\in E_1\cup E_2$, the assertion follows from Lemma \ref{deg3-A2point} [c).].
 If $P\in L_{12}\backslash( E_1\cup E_2)$, the assertion follows from Lemma \ref{deg3-near2A2points} [a).].
  If $P\in E_3$, the assertion follows from Lemma \ref{deg3-A1points} [g).].
   If $P\in (L_{13}\cup L_{23})\backslash (E_1'\cup E_2'\cup E_3)$, the assertion follows from Lemma \ref{deg3-nearA1A2points}.
  If $P\in (L_{1,1}\cup L_{2,1})\backslash (E_1'\cup E_2')$, the assertion follows from Lemma \ref{deg3-nearA2point} [c).].
  If $P$ is a general point, the assertion follows from Lemma \ref{deg3-genpoint}. 
  \item[X.] If $P\in E_1\cup E_1'\cup E_2\cup E_2'\cup E_3\cup E_3'$, the assertion follows from Lemma \ref{deg3-A2point} [c).].
If $P\in (L_{12}\cup L_{13}\cup L_{23}) \backslash(E_1\cup E_1'\cup E_2\cup E_2'\cup E_3\cup E_3')$, the assertion follows from Lemma \ref{deg3-near2A2points}.
 If $P$ is a general point, the assertion follows from Lemma \ref{deg3-genpoint}.
 \item[XI.]  If $P\in E_2$, the assertion follows from Lemma \ref{deg3-A3middlepoints} [a).].
If $P\in (E_1\cup E_1')\backslash E_2$, the assertion follows from Lemma \ref{deg3-boundaryA3points} [a).].
 If $P\in  \bigcup_{i\in\{1,2\}} \big(L_{1,i}\cup L_{1,i}'\big)\backslash (E_1\cup E_1')$, the assertion follows from Lemma \ref{deg3-nearA3points}.
  If $P\in L_{2,i}\backslash E_2$, the assertion follows from Lemma \ref{deg3-near2A1points} [b).].
   If $P$ is a general point, the assertion follows from Lemma \ref{deg3-genpoint}.
    \item[XII.] If $P\in E_2$, the assertion follows from Lemma \ref{deg3-A3middlepoints} [a).].
If $P\in  E_1'\backslash E_2$, the assertion follows from Lemma \ref{deg3-boundaryA3points} [a).].
 If $P\in E_1 \backslash E_2$, the assertion follows from Lemma \ref{deg3-boundaryA3points} [b).].
  If $P\in L_{13}\backslash E_1$, the assertion follows from Lemma \ref{deg3-nearA1A3points} [a).].
   If $P\in E_3\backslash L_{13}$, the assertion follows from Lemma \ref{deg3-A1points} [f).].
  If $P\in L_{2,1}\backslash E_2$, the assertion follows from Lemma \ref{deg3-near2A1points} [b).].
  If $P\in (L_{1,1}\cup L_{1,1}')\backslash E_1'$, the assertion follows from Lemma \ref{deg3-nearA3points}.
  If $P\in  (L_{3,1}\cup L_{3,1}')\backslash (E_3\cup L_{1,1}\cup L_{1,1}')$, the assertion follows from Lemma \ref{deg3-nearA1points} [f).].
  If $P$ is a general point, the assertion follows from Lemma \ref{deg3-genpoint}.
  \item[XIII.] If $P\in E_2$, the assertion follows from Lemma \ref{deg3-A3middlepoints} [a).].
If $P\in  (E_1\cup E_1')\backslash E_2$, the assertion follows from Lemma \ref{deg3-boundaryA3points} [b).].
 If $P\in (L_{13}\cup L_{13}')\backslash(E_1\cup E_1')$, the assertion follows from Lemma \ref{deg3-nearA1A3points} [a).].
  If $P\in L_{2,1}\backslash E_2$, the assertion follows from Lemma \ref{deg3-near2A1points} [b).].
   If $P\in L_3 \backslash (E_3\cup E_3')$, the assertion follows from Lemma \ref{deg3-near2A1points} [a).].
  If $P\in (E_3\cup E_3')\backslash (L_{13}\cup L_{13}')$, the assertion follows from Lemma \ref{deg3-A1points} [f).].
  If $P$ is a general point, the assertion follows from Lemma \ref{deg3-genpoint}.
   \item[XIV.] If $P\in E_1$, the assertion follows from Lemma \ref{deg3-913middleA4points}.
 If $P\in E_2\backslash E_1$, the assertion follows from Lemma \ref{deg3-1823middleA4points}.
 If $P\in E_3\backslash E_1$, the assertion follows from Lemma \ref{deg3-2729boundA4points}.
 If $P\in E_4\backslash E_2$, the assertion follows from Lemma \ref{deg3-910boundA4points} [a).]. 
  If $P\in L_{1,1}\backslash E_1$, the assertion follows from Lemma \ref{deg3-2723nearA4points}.
 If $P\in L_{3,1}\backslash E_3$, the assertion follows from Lemma \ref{deg3-2719nearA4points} [a).].
 If $P\in (L_{4,1}\cup L_{4,2})\backslash E_4$, the assertion follows from Lemma \ref{deg3-3625nearA4points}.
  If $P$ is a general point, the assertion follows from Lemma \ref{deg3-genpoint}.
   \item[XV.] If $P\in E_1$, the assertion follows from Lemma \ref{deg3-913middleA4points}.
 If $P\in E_2\backslash E_1$, the assertion follows from Lemma \ref{deg3-1823middleA4points}.
 If $P\in E_3\backslash E_1$, the assertion follows from Lemma \ref{deg3-2729boundA4points}.
 If $P\in E_4\backslash E_2$, the assertion follows from Lemma \ref{deg3-910boundA4points} [b).].
  If $P\in L_{1,1}\backslash E_1$, the assertion follows from Lemma \ref{deg3-2723nearA4points}.
  If $P\in L_{45}\backslash E_4$, the assertion follows from Lemma \ref{deg3-1817nearA1A4points}.
  If $P\in E_5\backslash L_{45}$, the assertion follows from Lemma \ref{deg3-A1points} [j).].
 If $P\in L_{3,1}\backslash E_3$, the assertion follows from Lemma \ref{deg3-2719nearA4points} [b).].
  If $P\in L_{5,1}\backslash (L_{3,1}\cup E_5)$, the assertion follows from Lemma \ref{deg3-A1points} [g).].
  If $P$ is a general point, the assertion follows from Lemma \ref{deg3-genpoint}.
  \item[XVI.] If $P\in E_2$, the assertion follows from Lemma \ref{deg3-35_middleA5points} [a).].
 If $P\in E_3\backslash E_2$, the assertion follows from Lemma \ref{deg3-23_middleA5points}.
 If $P\in E_4\backslash E_3$, the assertion follows from Lemma \ref{deg3-34_middleA5points} [a).].
 If $P\in E_5\backslash E_4$, the assertion follows from Lemma \ref{deg3-67_boundA5points} [a).].
  If $P\in E_1\backslash E_2$, the assertion follows from Lemma \ref{deg3-1213_boundA5points}.
 If $P\in L_{2,1}\backslash E_2$, the assertion follows from Lemma \ref{deg3-1_nearA5points}.
 If $P\in (L_{5,1}\cup L_{5,2})\backslash E_5$, the assertion follows from Lemma \ref{deg3-107_nearA5points}.
  If $P$ is a general point, the assertion follows from Lemma \ref{deg3-genpoint}.
  \item[XVII.] If $P\in E_2$, the assertion follows from Lemma \ref{deg3-35_middleA5points} [a).].
 If $P\in E_3\backslash E_2$, the assertion follows from Lemma \ref{deg3-23_middleA5points}.
 If $P\in E_4\backslash E_3$, the assertion follows from Lemma \ref{deg3-34_middleA5points} [a).].
 If $P\in E_5\backslash E_4$, the assertion follows from Lemma \ref{deg3-67_boundA5points} [b).].
  If $P\in E_1\backslash E_2$, the assertion follows from Lemma \ref{deg3-1213_boundA5points}.
 If $P\in L_{2,1}\backslash E_2$, the assertion follows from Lemma \ref{deg3-1_nearA5points}  [b).].
 If $P\in L_{56}\backslash E_5$, the assertion follows from Lemma \ref{deg3-1_nearA5points} [c).].
  If $P\in E_6\backslash L_{56}$, the assertion follows from Lemma \ref{deg3-A1points} [k).].
  If $P$ is a general point, the assertion follows from Lemma \ref{deg3-genpoint}.
  \item[XVIII.] If $P\in  E$, the assertion follows from Lemma \ref{deg3-35_2_3_points} [a).].
If $P\in  (E_1\cup E_2\cup E_3)\backslash  E$, the assertion follows from Lemma \ref{deg3-911_1_2} [b).].
  If $P\in  (L_{1,1}\cup L_{2,1}\cup L_{3,1}) \backslash (E_1\cup E_2\cup E_3)$, the assertion follows from Lemma \ref{deg3-near2A1points} [c).].
  If $P$ is a general point, the assertion follows from Lemma \ref{deg3-genpoint}.
   \item[XIX.]  If $P\in  E_2$, the assertion follows from Lemma \ref{deg3-919_2_3_4_points}.
  If $P\in  E_3\backslash E_2$, the assertion follows from Lemma \ref{deg3-35_2_3_points} [b).].
 If $P\in  E\backslash E_2$, the assertion follows from Lemma \ref{deg3-23_1_52_3_points}.
If $P\in  E_1\backslash E_2$, the assertion follows from Lemma \ref{deg3-2735_53_2_points}.
   If $P\in  E_4\backslash E_3$, the assertion follows from Lemma \ref{deg3-911_1_2} [c).].
  If $P\in  L\backslash E$, the assertion follows from Lemma \ref{deg3-98-2-points} [b).]
  If $P\in  L_{4,1} \backslash E_4$, the assertion follows from Lemma \ref{deg3-near2A1points} [d).].
  If $P$ is a general point, the assertion follows from Lemma \ref{deg3-genpoint}.
  \item[XX.] If $P\in   E_3$, the assertion follows from Lemma \ref{deg3-13_3_6_points}.
If $P\in   E_4\backslash E_3$, the assertion follows from Lemma \ref{deg3-37_2_5_points}.
 If $P\in   E_2\backslash E_3$, the assertion follows from Lemma \ref{deg3-613_54_4_points}.
  If $P\in  E\backslash E_3$, the assertion follows from Lemma \ref{deg3-35_2_3_points} [c).].
  If $P\in E_5\backslash E_4$, the assertion follows from Lemma \ref{deg3-35-1-4-points} [b).].
   If $P\in E_1\backslash E_2$, the assertion follows from Lemma \ref{deg3-34_middleA5points} [b).].
  If $P\in L_{5,1}\backslash E_5$, the assertion follows from Lemma \ref{deg3-1_nearA5points} [d).].
  If $P$ is a general point, the assertion follows from Lemma \ref{deg3-genpoint}.
\end{itemize}
\end{proof}

 \printbibliography
\end{document}